\documentclass[reqno,10pt]{amsart}
\usepackage{amscd,amssymb,verbatim,array}
\usepackage{hyperref}
\usepackage{amsmath}
\usepackage[active]{srcltx} 

\setlength{\textwidth}{6.3in}
 \addtolength{\oddsidemargin}{-1.7cm}
\addtolength{\evensidemargin}{-1.7cm}

\numberwithin{equation}{section} \theoremstyle{plain}

\newtheorem{theorem}{Theorem}[section]
\newtheorem{lemma}[theorem]{Lemma}

\newtheorem{corollary}[theorem]{Corollary}
\newtheorem{definition}[theorem]{Definition}

\theoremstyle{definition}

\theoremstyle{remark}

\numberwithin{equation}{section}

\newcommand{\Det}{\operatorname{Det}}
\newcommand{\Dir}{\operatorname{D}}

\newcommand{\Oo}{\operatorname{O}}
\newcommand{\Abs}{\operatorname{abs}}
\newcommand{\Rel}{\operatorname{rel}}

\newcommand{\Tan}{\operatorname{tan}}
\newcommand{\Nor}{\operatorname{nor}}
\newcommand{\Dim}{\operatorname{dim}}
\newcommand{\Ord}{\operatorname{ord}}

\newcommand{\Ric}{\operatorname{Ric}}
\newcommand{\Ker}{\operatorname{ker}}
\newcommand{\Spec}{\operatorname{Spec}}
\newcommand{\Tr}{\operatorname{Tr}}
\newcommand{\re}{\operatorname{Re}}

\newcommand{\I}{\operatorname{I}}
\newcommand{\II}{\operatorname{II}}
\newcommand{\III}{\operatorname{III}}
\newcommand{\V}{\operatorname{V}}
\newcommand{\IV}{\operatorname{IV}}

\newcommand{\ddet}{\operatorname{det}}
\newcommand{\Id}{\operatorname{Id}}

\newcommand{\vol}{\operatorname{vol}}


\begin{document}

\title[zeta-determinant of the Dirichlet-to-Neumann operator on forms]
{The zeta-determinant of the Dirichlet-to-Neumann operator of the Steklov Problem on forms}

\author{Klaus Kirsten}
\address{Mathematical Reviews, American Mathematical Society,
 416 4$th$ Street, Ann Arbor, MI 48103, USA}

\email{Klaus\_Kirsten@Baylor.edu}

\author{Yoonweon Lee}

\address{Department of Mathematics Education, Inha University, Incheon, 22212, Korea and
School of Mathematics, Korea Institute for Advanced Study, 85 Hoegiro, Dongdaemun-gu, Seoul, 02455, Korea
}

\email{yoonweon@inha.ac.kr}

\subjclass[2000]{Primary: 58J20; Secondary: 14F40}
\keywords{zeta-determinants of elliptic operators, Dirichlet-to-Neumann operator, curvature tensors, absolute/relative boundary conditions, conformal rescaling}

\begin{abstract}
On a compact Riemannian manifold $M$ with boundary $Y$, we express the log of the zeta-determinant of the Dirichlet-to-Neumann operator acting on $q$-forms on $Y$
as the difference of the log of the zeta-determinant of the Laplacian on $q$-forms on $M$ with absolute boundary conditions and
that of the Laplacian with Dirichlet boundary conditions with some additional terms which are expressed by curvature tensors.
When the dimension of $M$ is $2$ or $3$, we compute these terms explicitly.
We also discuss the value of the zeta function at zero associated to the Dirichlet-to-Neumann operator by using a conformal rescaling method.
As an application, we recover the result of the conformal invariance obtained in \cite{GG} when $\Dim M = 2$.
\end{abstract}

\maketitle

\section{Introduction}

Let $(M, Y;g)$ be an $m$-dimensional Riemannian manifold with smooth boundary $Y$ and $\Omega^{q}(M)$ be the space of smooth $q$-forms.
We consider the exterior derivative $d_{q} : \Omega^{q}(M) \rightarrow \Omega^{q+1}(M)$ and its formal adjoint $\delta_{q} = (-1)^{mq+1} \star_{M} d \star_{M}$, where $\star_{M}$ is the Hodge star operator.
Then the Hodge-De Rham Laplacian $\Delta^{q}_{M}$ acting on $\Omega^{q}(M)$ is defined by
$\Delta_{M}^{q} = \delta_{q} d_{q} + d_{q-1} \delta_{q-1}$.
If there is no confusion, we will drop the $q$ on $d_{q}$ and $\delta_{q}$.
We choose a collar neighborhood $U$ of $Y$ which is diffeomorphic to $Y \times [0, 1)$ and denote the canonical inclusion by $i : Y \rightarrow M$. We also choose a unit vector field $\frac{\partial}{\partial {u}}$ which is an  inward normal vector to $Y$ and denote the dual by $du$.
We write a $q$-form $\omega$ on $U$ by $\omega = \omega_{1} + du \wedge \omega_{2}$, and define the tangential part $\omega_{\Tan}$ and normal part $\omega_{\Nor}$ of $\omega$ as follows.

\begin{eqnarray}     \label{E:1.1}
\omega_{\Tan} := i^{\ast} \omega = \omega_{1}|_{Y}, \qquad \omega_{\Nor} = i^{\ast} \left( \iota_{\frac{\partial}{\partial_{u}}} \omega \right) = \omega_{2}|_{Y},
\end{eqnarray}

\noindent
where $\iota_{\frac{\partial}{\partial {u}}} \omega$ is the interior product of $\omega$ and $\frac{\partial}{\partial {u}}$.
A $q$-form $\omega$ is said to satisfy absolute boundary conditions if $\omega_{\Nor} = (d \omega)_{\Nor} = 0$,
and it is said to satisfy relative boundary conditions if $\omega_{\Tan} = (\delta \omega)_{\Tan} = 0$.
We denote by $\Omega^{q}_{\Abs / \Rel}(M)$ the space of smooth $q$-forms satisfying absolute/relative boundary conditions, {\it i.e.}

\begin{eqnarray}     \label{E:1.2}
\Omega^{q}_{\Abs}(M)  =  \{ \omega \in \Omega^{q}(M) \mid \omega_{\Nor} = (d \omega)_{\Nor} = 0 \}, \quad
\Omega^{q}_{\Rel}(M)  =  \{ \omega \in \Omega^{q}(M) \mid \omega_{\Tan} = (\delta \omega)_{\Tan} = 0 \}.
\end{eqnarray}

\noindent
We also denote by $\Delta^{q}_{M, \Abs/\Rel}$ and $\Delta^{q}_{M, \Dir}$ the Laplacian $\Delta^{q}_{M}$ with absolute/relative and Dirichlet boundary conditions, respectively.
Then, $\Delta^{q}_{M, \Abs/\Rel}$ and $\Delta^{q}_{M, \Dir}$ are self-adjoint operators having discrete eigenvalues.
We note that for $q = 0$, absolute/relative boundary conditions are equal to Neumann/Dirichlet boundary conditions.
For $0 \leq \lambda \in {\mathbb R}$, we define the Dirichlet-to-Neumann operator $Q^{q}_{\Abs}(\lambda)$ and
$Q^{q}_{\Rel}(\lambda)$ acting on $\Omega^{q}(Y)$ as in \cite{Ca, RS, Ta}.
For $\varphi \in \Omega^{q}(Y)$, we choose arbitrary extensions $\phi \in \Omega^{q}(M)$ and
${\widetilde \phi} \in \Omega^{q+1}(M)$ of $\varphi$ and $du \wedge \varphi$ satisfying

\begin{eqnarray}  \label{E:1.3}
i^{\ast} \phi = \varphi,  \qquad i^{\ast} \left( \iota_{\frac{\partial}{\partial_{u}}} \phi \right) = 0, \qquad
i^{\ast} {\widetilde \phi} = 0, \qquad
i^{\ast} \left( \iota_{\frac{\partial}{\partial_{u}}} {\widetilde \phi} \right) = \varphi.
\end{eqnarray}

\noindent
We define the Poisson operators

\begin{eqnarray}    \label{E:1.4}
& & {\mathcal P}^{q}_{\Abs}(\lambda) : \Omega^{q}(Y) \rightarrow \Omega^{q}(M), \qquad
{\mathcal P}^{q}_{\Abs} \varphi := \phi -
\left( \Delta^{q}_{M, \Dir} + \lambda \right)^{-1} \left( \Delta^{q}_{M} + \lambda \right) \phi,  \\
& & {\mathcal P}^{q}_{\Rel}(\lambda) : \Omega^{q}(Y) \rightarrow \Omega^{q+1}(M), \qquad
{\mathcal P}^{q}_{\Rel} \varphi := {\widetilde \phi} -
\left( \Delta^{q+1}_{M, \Dir} + \lambda \right)^{-1} \left( \Delta^{q+1}_{M} + \lambda \right) {\widetilde \phi}.  \nonumber
\end{eqnarray}

\noindent
It is not difficult to see that the definition of ${\mathcal P}^{q}_{\Abs}(\lambda)$ and
${\mathcal P}^{q}_{\Rel}(\lambda)$ do not depend on the choices of the extensions of $\phi$ and ${\widetilde \phi}$ \cite{Ca,MM}.  ${\mathcal P}^{q}_{\Abs}(\lambda)$ and
${\mathcal P}^{q}_{\Rel}(\lambda)$ satisfy the following relations.

\begin{eqnarray}    \label{E:1.5}
& & \left( \Delta^{q}_{M} + \lambda \right) {\mathcal P}^{q}_{\Abs}(\lambda) \varphi = 0, \qquad
i^{\ast} {\mathcal P}^{q}_{\Abs}(\lambda) \varphi = \varphi, \qquad
i^{\ast} \left( \iota_{\frac{\partial}{\partial_{u}}} {\mathcal P}^{q}_{\Abs}(\lambda) \varphi \right) = 0,  \\
& & \left( \Delta^{q+1}_{M} + \lambda \right) {\mathcal P}^{q}_{\Rel}(\lambda) \varphi = 0, \qquad
i^{\ast} {\mathcal P}^{q}_{\Rel}(\lambda) \varphi = 0, \qquad
i^{\ast} \left( \iota_{\frac{\partial}{\partial_{u}}} {\mathcal P}^{q}_{\Rel}(\lambda) \varphi \right) = \varphi.
\nonumber
\end{eqnarray}

\begin{definition}  \label{Definition:1.1}
We define two Drichlet-to-Neumann operators $Q^{q}_{\Abs}(\lambda)$ and
$Q^{q}_{\Rel}(\lambda)$ as follows \cite{Ca, RS, Ta}.
\begin{eqnarray*}
& & Q^{q}_{\Abs}(\lambda) : \Omega^{q}(Y) \rightarrow \Omega^{q}(Y), \qquad
Q^{q}_{\Abs}(\lambda) (\varphi) = - i^{\ast} \left( \iota_{\frac{\partial}{\partial_{u}}} d
{\mathcal P}^{q}_{\Abs}(\lambda) \varphi \right),  \\
& & Q^{q}_{\Rel}(\lambda) : \Omega^{q}(Y) \rightarrow \Omega^{q}(Y), \qquad
Q^{q}_{\Rel}(\lambda) (\varphi) = i^{\ast} \left( \delta {\mathcal P}^{q}_{\Rel}(\lambda) \varphi \right).
\end{eqnarray*}
\end{definition}

\vspace{0.2 cm}
\noindent
{\it Remark} : (1) When $q=0$ and $\lambda = 0$, $Q^{0}_{\Abs}(0)$ is the usual Dirichlet-to-Neumann operator on
the Steklov problem on the space of smooth functions.

\noindent
(2) In eq.(\ref{E:3.12}) below, $Q^{q}_{\Abs}(\lambda)$ and $Q^{q-1}_{\Rel}(\lambda)$ are defined by using a local coordinate system, which is more intuitive.

\vspace{0.2 cm}

The Green formula for the Hodge-De Rham Laplacians is given as follows \cite{Mo,Sc}. For $\omega$, $\theta \in \Omega^{q}(M)$,

\begin{eqnarray}    \label{E:1.6}
\langle d \omega, d \theta \rangle_{M} + \langle \delta \omega, \delta \theta \rangle_{M}
& = & \langle \Delta^{q}_{M} \omega, \theta \rangle_{M} +
\int_{Y} i^{\ast}(\theta \wedge \star_{M} d \omega - \delta \omega \wedge \star_{M} \theta),
\end{eqnarray}

\noindent
where we use the convention that $d \vol(M) = - du \wedge d\vol(Y)$ on $Y$.
For $\varphi_{1}$, $\varphi_{2} \in \Omega^{q}(Y)$, eq.(\ref{E:1.6}) shows that

\begin{eqnarray*}
\langle Q^{q}_{\Abs}(\lambda) \varphi_{1}, ~  \varphi_{2}  \rangle_{Y} & = &
\lambda \langle {\mathcal P}^{q}_{\Abs} \varphi_{1}, ~ {\mathcal P}^{q}_{\Abs} \varphi_{2}  \rangle_{M} ~ + ~
\langle d {\mathcal P}^{q}_{\Abs} \varphi_{1}, ~ d {\mathcal P}^{q}_{\Abs} \varphi_{2}  \rangle_{M} ~ + ~
\langle \delta {\mathcal P}^{q}_{\Abs} \varphi_{1}, ~ \delta {\mathcal P}^{q}_{\Abs} \varphi_{2}  \rangle_{M}, \\
\langle Q^{q}_{\Rel}(\lambda) \varphi_{1}, ~ \varphi_{2}  \rangle_{Y} & = &
\lambda \langle {\mathcal P}^{q}_{\Rel} \varphi_{1}, ~ {\mathcal P}^{q}_{\Rel} \varphi_{2}  \rangle_{M} ~ + ~
\langle d {\mathcal P}^{q}_{\Rel} \varphi_{1}, ~ d {\mathcal P}^{q}_{\Rel} \varphi_{2}  \rangle_{M} ~ + ~
\langle \delta {\mathcal P}^{q}_{\Rel} \varphi_{1}, ~ \delta {\mathcal P}^{q}_{\Rel} \varphi_{2}  \rangle_{M},
\end{eqnarray*}

\noindent
which implies that $Q^{q}_{\Abs/\Rel}(\lambda)$ are non-negative self-adjoint operators. Moreover, they are elliptic $\Psi$DO's with parameter $\lambda$ of order $1$ and weight $2$ with the principal symbol $\sigma_{L}(Q^{q}_{\Abs/\Rel}(\lambda))(x, \xi) = \sqrt{|\xi|^{2} + \lambda}$ (see \cite{BFK,Sh} for a $\Psi$DO's with parameter).
It is well known (Theorem 2.7.3 in \cite{Gi1}) that

\begin{eqnarray}    \label{E:1.7}
& & \Ker \Delta_{M, \Abs}^{q} = \{ \omega \in \Omega^{q}_{\Abs}(M) \mid d \omega = \delta \omega = 0 \} \cong H^{q}(M), \\
& & \Ker \Delta_{M, \Rel}^{q} = \{ \omega \in \Omega^{q}_{\Rel}(M) \mid d \omega = \delta \omega = 0 \} \cong H^{q}(M, Y),   \nonumber
\end{eqnarray}

\noindent
which shows that

\begin{eqnarray}    \label{E:1.8}
\Ker Q^{q}_{\Abs}(0) = \left\{ i^{\ast} \omega \mid \omega \in \Ker \Delta_{M, \Abs}^{q} \right\}, \qquad
\Ker Q^{q}_{\Rel}(0) = \left\{ i^{\ast} \left( \iota_{\frac{\partial}{\partial_{u}}} \omega \right) \mid \omega \in \Ker \Delta_{M, \Rel}^{q+1} \right\}.
\end{eqnarray}

\noindent
In particular, it follows that

\begin{eqnarray}   \label{E:1.9}
\Dim \Ker Q^{q}_{\Abs}(0) & = & \Dim \Ker \Delta_{M, \Abs}^{q} = \Dim H^{q}(M), \\
\Dim \Ker Q^{q}_{\Rel}(0) & = & \Dim \Ker \Delta_{M, \Rel}^{q+1} = \Dim H^{q+1}(M, Y).  \nonumber
\end{eqnarray}

For ${\mathcal D} = \Delta^{q}_{M, \Abs/\Rel} + \lambda$ or $Q^{q}(\lambda)_{\Abs/\Rel}$, we define the zeta function $\zeta_{{\mathcal D}}(s)$ by

\begin{eqnarray}  \label{E:1.10}
\zeta_{{\mathcal D}}(s) & = & \frac{1}{\Gamma(s)} \int_{0}^{\infty} t^{s-1} \left( \Tr e^{- t {\mathcal D}} - \Dim \Ker {\mathcal D} \right) dt.
\end{eqnarray}

\noindent
It is well known that $\zeta_{{\mathcal D}}(s)$ is analytic for $\Re s > \frac{m}{\Ord({\mathcal D})}$ and has a meromorphic continuation to the whole complex plane having a regular value at $s=0$.
When $\Ker {\mathcal D} = \{ 0 \}$, we define the zeta-determinant of ${\mathcal D}$ by $\Det {\mathcal D} = e^{-\zeta_{{\mathcal D}}^{\prime}(0)}$.
If $\Dim \Ker {\mathcal D} \geq 1$, we define the modified zeta-determinant by the same formula, which we denote by $\Det^{\ast} {\mathcal D} = e^{- \zeta_{{\mathcal D}}^{\prime}(0)}$.
In this paper, we are going to discuss

\begin{eqnarray}    \label{E:1.11}
& & \ln \Det^{\ast} \Delta^{q}_{M, \Abs} - \ln \Det \Delta^{q}_{M, \Dir} - \ln \Det^{\ast} Q^{q}_{\Abs}(0)
\quad \text{and} \quad \\
& & \ln \Det^{\ast} \Delta^{q+1}_{M, \Rel} - \ln \Det \Delta^{q+1}_{M, \Dir} - \ln \Det^{\ast} Q^{q}_{\Rel}(0), \nonumber
\end{eqnarray}

\noindent
and their applications.
However, for the Hodge star operators $\star_{M}$ and $\star_{Y}$ of $M$ and $Y$, simple computation shows that

\begin{eqnarray}    \label{E:1.12}
\star_{M}^{-1} \Delta^{m-q}_{M, \Rel} \star_{M} = \Delta^{q}_{M, \Abs}, \qquad \star_{M}^{-1} \Delta^{m-q}_{M, \Dir} \star_{M} = \Delta^{q}_{M, \Dir},
\quad \text{and} \quad \star_{Y}^{-1} Q^{m-1-q}_{\Rel}(\lambda) \star_{Y} = Q^{q}_{\Abs}(\lambda),
\end{eqnarray}

\noindent
which shows that

\begin{eqnarray}    \label{E:1.13}
& & \ln \Det^{\ast} \Delta^{q}_{M, \Abs} - \ln \Det \Delta^{q}_{M, \Dir} - \ln \Det^{\ast} Q^{q}_{\Abs}(0)  \\
& = & \ln \Det^{\ast} \Delta^{m-q}_{M, \Rel} - \ln \Det \Delta^{m-q}_{M, \Dir} - \ln \Det^{\ast} Q^{m-1-q}_{\Rel}(0).  \nonumber
\end{eqnarray}

\noindent
Hence, it is enough to consider $\ln \Det^{\ast} \Delta^{q}_{M, \Abs} - \ln \Det \Delta^{q}_{M, \Dir} - \ln \Det^{\ast} Q^{q}_{\Abs}(0)$.

In this paper, we use the method of proving the BFK-gluing formula for zeta-determinants
of Laplacians \cite{BFK,Ca,Fo} to show that $~~\ln \Det^{\ast} \Delta^{q}_{M, \Abs} - \ln \Det \Delta^{q}_{M, \Dir} - \ln \Det^{\ast} Q^{q}_{\Abs}(0)~~$ is expressed by some curvature tensors on $Y$. We compute it explicitly when $\Dim M = 2$ and $3$.
We also discuss the value of the zeta function $\zeta_{Q^{q}_{\Abs}}(s)$ at $s=0$
by using the conformal metric rescaling method.
Finally, when $M$ is a $2$-dimensional smooth Riemannian manifold with smooth boundary $Y$ and $\ell(Y)$ is the length of $Y$,
we show that $~\frac{1}{\ell(Y)} \Det^{\ast} Q^{0}_{\Abs}(0)~$ is a conformal invariant,
which was proved earlier by Guillarmou and Guillop\'e in \cite{GG} (see also \cite{EW}).

\vspace{0.3 cm}

\section{Relation between $\ln \Det^{\ast} \Delta^{q}_{M, \Abs} - \ln \Det \Delta^{q}_{M, \Dir}$ and $\ln \Det^{\ast} Q^{q}_{\Abs}(0)$}

In this section, we are going to discuss the relation between $~ \ln \Det^{\ast} \Delta^{q}_{M, \Abs} - \ln \Det \Delta^{q}_{M, \Dir} ~$ and
$~ \ln \Det^{\ast} Q^{q}_{\Abs}(0) ~$. We first recall that for $\lambda \neq 0$,

\begin{eqnarray}    \label{E:2.1}
& & ( \Delta^{q}_{M} + \lambda ) \cdot {\mathcal P}^{q}_{\Abs}(\lambda) = 0, \qquad i^{\ast} {\mathcal P}^{q}_{\Abs}(\lambda) = \Id, \qquad
i^{\ast} \iota_{\frac{\partial}{\partial u}}{\mathcal P}^{q}_{\Abs}(\lambda) = 0,
\end{eqnarray}

\noindent
which shows that

\begin{eqnarray}   \label{E:2.2}
& & \left( \Delta^{q}_{M, \Abs} + \lambda \right)^{-1} - \left( \Delta^{q}_{M, \Dir} + \lambda \right)^{-1} ~ = ~
{\mathcal P}^{q}_{\Abs}(\lambda) i^{\ast} \left( \Delta^{q}_{M, \Abs} + \lambda \right)^{-1}.
\end{eqnarray}

\begin{lemma}  \label{Lemma:2.1}
For $\lambda \neq 0$ we have
\begin{eqnarray*}
\frac{d}{d \lambda} {\mathcal P}^{q}_{\Abs}(\lambda) & = & - \left( \Delta^{q}_{M, \Dir} + \lambda \right)^{-1} {\mathcal P}^{q}_{\Abs}(\lambda).
\end{eqnarray*}
\end{lemma}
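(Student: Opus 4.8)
The plan is to differentiate the defining formula \eqref{E:1.4} for ${\mathcal P}^{q}_{\Abs}(\lambda)$ directly in $\lambda$. Fixing $\varphi \in \Omega^{q}(Y)$ and an extension $\phi$ satisfying \eqref{E:1.3}, we have ${\mathcal P}^{q}_{\Abs}(\lambda)\varphi = \phi - \left(\Delta^{q}_{M,\Dir}+\lambda\right)^{-1}\left(\Delta^{q}_{M}+\lambda\right)\phi$. Since $\phi$ is independent of $\lambda$, only the second term contributes, and there we must differentiate a product of the resolvent $R(\lambda):=\left(\Delta^{q}_{M,\Dir}+\lambda\right)^{-1}$ and the vector $\left(\Delta^{q}_{M}+\lambda\right)\phi$, the latter depending on $\lambda$ only through the additive term $\lambda\phi$.

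The key computation is then $\frac{d}{d\lambda}\left[R(\lambda)\left(\Delta^{q}_{M}+\lambda\right)\phi\right] = -R(\lambda)^{2}\left(\Delta^{q}_{M}+\lambda\right)\phi + R(\lambda)\phi$, using the standard resolvent identity $\frac{d}{d\lambda}R(\lambda) = -R(\lambda)^{2}$. Now factor out one copy of $R(\lambda)$: the right-hand side equals $-R(\lambda)\left[R(\lambda)\left(\Delta^{q}_{M}+\lambda\right)\phi - \phi\right] = R(\lambda)\left[\phi - R(\lambda)\left(\Delta^{q}_{M}+\lambda\right)\phi\right] = R(\lambda)\,{\mathcal P}^{q}_{\Abs}(\lambda)\varphi$. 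Therefore $\frac{d}{d\lambda}{\mathcal P}^{q}_{\Abs}(\lambda)\varphi = -R(\lambda)\,{\mathcal P}^{q}_{\Abs}(\lambda)\varphi$, which is the claimed identity.

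A few points require care rather than difficulty. First, one should note that ${\mathcal P}^{q}_{\Abs}(\lambda)\varphi$ is independent of the chosen extension $\phi$ (as recalled after \eqref{E:1.4}), so the formula obtained does not depend on that auxiliary choice; alternatively one may simply observe that both sides of the asserted identity are intrinsically defined, so it suffices to verify it for one extension. Second, the differentiability of $\lambda\mapsto R(\lambda)$ as a bounded-operator-valued (indeed smoothing-operator-valued) function on the appropriate Sobolev spaces for $\lambda$ away from $-\Spec\Delta^{q}_{M,\Dir}$ is standard, and since $\left(\Delta^{q}_{M}+\lambda\right)\phi = \left(\Delta^{q}_{M,\Dir}+\lambda\right)\phi$ in the interior while $\phi$ is smooth, all the manipulations take place among genuine smooth forms.

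The main obstacle, such as it is, is purely bookkeeping: correctly tracking that $\left(\Delta^{q}_{M}+\lambda\right)\phi$ carries its own $\lambda$-dependence (producing the extra $+R(\lambda)\phi$ term) and then recognizing that this extra term is exactly what is needed to reconstitute ${\mathcal P}^{q}_{\Abs}(\lambda)\varphi$ after factoring out $R(\lambda)$. There is no analytic difficulty; the content is the algebraic identity $-R^{2}(\Delta+\lambda)\phi + R\phi = -R\left(R(\Delta+\lambda)\phi - \phi\right) = R\,{\mathcal P}^{q}_{\Abs}(\lambda)\varphi$.
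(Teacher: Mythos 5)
Your computation is correct: differentiating ${\mathcal P}^{q}_{\Abs}(\lambda)\varphi = \phi - R(\lambda)\left(\Delta^{q}_{M}+\lambda\right)\phi$ with $R(\lambda)=\left(\Delta^{q}_{M,\Dir}+\lambda\right)^{-1}$ via $R'(\lambda)=-R(\lambda)^{2}$ gives $-R(\lambda)^{2}\left(\Delta^{q}_{M}+\lambda\right)\phi+R(\lambda)\phi$ for the derivative of the subtracted term, and factoring out $R(\lambda)$ reconstitutes ${\mathcal P}^{q}_{\Abs}(\lambda)\varphi$ with the right sign. The paper argues differently: it differentiates the three characterizing relations \eqref{E:2.1}, obtaining ${\mathcal P}^{q}_{\Abs}(\lambda)+\left(\Delta^{q}_{M}+\lambda\right)\frac{d}{d\lambda}{\mathcal P}^{q}_{\Abs}(\lambda)=0$ together with $i^{\ast}\frac{d}{d\lambda}{\mathcal P}^{q}_{\Abs}(\lambda)=0$ and $i^{\ast}\iota_{\frac{\partial}{\partial u}}\frac{d}{d\lambda}{\mathcal P}^{q}_{\Abs}(\lambda)=0$, and then identifies $\frac{d}{d\lambda}{\mathcal P}^{q}_{\Abs}(\lambda)$ as the unique solution of the resulting inhomogeneous Dirichlet problem, namely $-\left(\Delta^{q}_{M,\Dir}+\lambda\right)^{-1}{\mathcal P}^{q}_{\Abs}(\lambda)$. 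So the paper leans on uniqueness for the Dirichlet boundary value problem and never invokes the resolvent-derivative identity, whereas you work directly from the explicit formula \eqref{E:1.4}; your route has the small advantage of making the independence from the extension $\phi$ a non-issue (it is manifest on both sides), while the paper's is the one that generalizes when no closed formula for the Poisson operator is at hand. One cosmetic caveat: your parenthetical claim that $\left(\Delta^{q}_{M}+\lambda\right)\phi=\left(\Delta^{q}_{M,\Dir}+\lambda\right)\phi$ ``in the interior'' is not needed and is slightly misleading, since $\phi$ need not lie in the domain of the Dirichlet realization; all you use is that $R(\lambda)$ is applied to the smooth form $\left(\Delta^{q}_{M}+\lambda\right)\phi$, which is enough.
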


\begin{proof}
Taking the derivative of (\ref{E:2.1}), we obtain the following equalities.
\begin{eqnarray*}
{\mathcal P}^{q}_{\Abs}(\lambda) + ( \Delta^{q}_{M} + \lambda ) \cdot \frac{d}{d \lambda} {\mathcal P}^{q}_{\Abs}(\lambda) = 0, \qquad i^{\ast} \frac{d}{d \lambda} {\mathcal P}^{q}_{\Abs}(\lambda) = 0, \qquad
i^{\ast} \iota_{\frac{\partial}{\partial u}} \frac{d}{d \lambda} {\mathcal P}^{q}_{\Abs}(\lambda) = 0,
\end{eqnarray*}
which yields the conclusion.
\end{proof}

\noindent
From Definition \ref{Definition:1.1} we note that

\begin{eqnarray}    \label{E:2.3}
\frac{d}{d \lambda} Q_{\Abs}^{q}(\lambda) & = &
- i^{\ast} \iota_{\frac{\partial}{\partial_{u}}} ~ d ~ \frac{d}{d\lambda} {\mathcal P}_{\Abs}^{q}(\lambda)
~ = ~ - i^{\ast} \iota_{\frac{\partial}{\partial_{u}}} ~ d ~ \left( - \left( \Delta^{q}_{M, \Dir} + \lambda \right)^{-1} \right) {\mathcal P}^{q}_{\Abs}(\lambda)   \\
& = & - i^{\ast} \iota_{\frac{\partial}{\partial_{u}}} ~ d ~ \left( \left( \Delta^{q}_{M, \Abs} + \lambda \right)^{-1} - \left( \Delta^{q}_{M, \Dir} + \lambda \right)^{-1} \right) {\mathcal P}^{q}_{\Abs}(\lambda)     \nonumber \\
& = & - i^{\ast} \iota_{\frac{\partial}{\partial_{u}}} ~ d ~ {\mathcal P}^{q}_{\Abs}(\lambda) i^{\ast} \left( \Delta^{q}_{M, \Abs} + \lambda \right)^{-1} {\mathcal P}^{q}_{\Abs}(\lambda)    \nonumber \\
& = & Q_{\Abs}^{q}(\lambda) i^{\ast} \left( \Delta^{q}_{M, \Abs} + \lambda \right)^{-1} {\mathcal P}^{q}_{\Abs}(\lambda),   \nonumber
\end{eqnarray}

\noindent
where in the third equality we used the fact that $\omega_{\Nor} = (d \omega)_{\Nor} = 0$ for $\omega \in \Omega^{q}_{\Abs}(M)$.
This yields

\begin{eqnarray}    \label{E:2.4}
Q_{\Abs}^{q}(\lambda)^{-1} \frac{d}{d \lambda} Q_{\Abs}^{q}(\lambda) & = &
i^{\ast} \left( \Delta^{q}_{M, \Abs} + \lambda \right)^{-1} {\mathcal P}^{q}_{\Abs}(\lambda).
\end{eqnarray}

\noindent
For $\nu = [\frac{m-1}{2}] + 1$, we also note that

\begin{eqnarray}    \label{E:2.5}
& & \frac{d^{\nu}}{d\lambda^{\nu}} \left\{ \log \Det \left( \Delta_{M, \Abs}^{q} + \lambda \right) -
\log \Det \left( \Delta_{M, \Dir}^{q} + \lambda \right)   \right\}   \\
& = &
\Tr \left\{ \frac{d^{\nu - 1}}{d\lambda^{\nu - 1}} \left( \left( \Delta_{M, \Abs}^{q} + \lambda \right)^{-1} -
\left( \Delta_{M, \Dir}^{q} + \lambda \right)^{-1} \right) \right\}   \nonumber   \\
& = & \Tr \left\{ \frac{d^{\nu - 1}}{d\lambda^{\nu - 1}}
\left( {\mathcal P}_{\Abs}^q(\lambda) i^{\ast} \left( \Delta_{M, \Abs}^{q} + \lambda \right)^{-1} \right) \right\}   \nonumber  \\
& = & \Tr \left\{ \frac{d^{\nu - 1}}{d\lambda^{\nu - 1}}
\left( i^{\ast} \left( \Delta_{M, \Abs}^{q} + \lambda \right)^{-1} {\mathcal P}_{\Abs}^q(\lambda) \right) \right\}
~ = ~ \Tr \left\{ \frac{d^{\nu - 1}}{d\lambda^{\nu - 1}}
\left( Q_{\Abs}^{q}(\lambda)^{-1} \frac{d}{d \lambda} Q_{\Abs}^{q}(\lambda) \right) \right\}    \nonumber  \\
& = & \frac{d^{\nu}}{d\lambda^{\nu}} \log \Det Q_{\Abs}^{q}(\lambda).   \nonumber
\end{eqnarray}

\noindent
This equality leads to the following result, which has been already proved in Theorem 6.2 of \cite{Ca} by a different method.

\begin{lemma}  \label{Lemma:2.2}
There exists a polynomial $P^{q}(\lambda) = \sum_{k=0}^{[\frac{m-1}{2}]} a_{k} \lambda^{k}$ such that
\begin{eqnarray*}
& & \log \Det \left( \Delta_{M, \Abs}^{q} + \lambda \right) -
\log \Det \left( \Delta_{M, \Dir}^{q} + \lambda \right) ~ = ~ \sum_{k=0}^{[\frac{m-1}{2}]} a_{k} \lambda^{k} + \ln \Det Q^{q}_{\Abs}(\lambda).
\end{eqnarray*}
\end{lemma}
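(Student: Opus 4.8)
The plan is to integrate equation (\ref{E:2.5}) up from the $\nu$-th derivative to the function itself, keeping careful track of the constants of integration, and then to invoke the known analytic structure of all three zeta-determinants as functions of the parameter $\lambda$. First I would set
\[
F(\lambda) := \log \Det \left( \Delta_{M, \Abs}^{q} + \lambda \right) - \log \Det \left( \Delta_{M, \Dir}^{q} + \lambda \right) - \ln \Det Q^{q}_{\Abs}(\lambda),
\]
which is a priori only defined for $\lambda$ in, say, a half-line $(0,\infty)$ (and is real-analytic there, since the resolvents depend analytically on $\lambda$ away from the spectrum). Equation (\ref{E:2.5}), together with the analogous elementary identity $\frac{d^{\nu}}{d\lambda^{\nu}} \log \Det(\Delta^q_{M,\Dir}+\lambda) = (-1)^{\nu-1}(\nu-1)!\,\Tr\,(\Delta^q_{M,\Dir}+\lambda)^{-\nu}$ (used implicitly to make sense of the left side), shows that $F^{(\nu)}(\lambda) \equiv 0$ on $(0,\infty)$. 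Integrating $\nu$ times gives that $F(\lambda)$ agrees, on $(0,\infty)$, with a polynomial $\sum_{k=0}^{\nu-1} a_k \lambda^k = \sum_{k=0}^{[\frac{m-1}{2}]} a_k \lambda^k$ for some real constants $a_k$; this is the asserted identity.

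The one subtlety is that differentiating $\log \Det(\Delta^q_{M,\Abs/\Dir}+\lambda)$ term by term — i.e. justifying the first equality in (\ref{E:2.5}) — requires knowing that $\frac{d}{d\lambda}\log\Det(\Delta^q_{M,\Abs}+\lambda) = \Tr\left[(\Delta^q_{M,\Abs}+\lambda)^{-1} - (\Delta^q_{M,\Abs})^{-1}\Pi\right]$ up to a $\lambda$-independent correction, and more to the point that the \emph{difference} $\frac{d}{d\lambda}\big(\log\Det(\Delta^q_{M,\Abs}+\lambda)-\log\Det(\Delta^q_{M,\Dir}+\lambda)\big)$ has no anomalous term because the operators have the same leading symbol, so the relevant heat-trace coefficient cancels; after the first derivative the resolvent difference is already trace class for $m$ small enough and genuinely of trace class after $[\frac{m-1}{2}]$ further derivatives, which is exactly why one differentiates $\nu = [\frac{m-1}{2}]+1$ times. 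These facts are standard (they are precisely what underlies the BFK-gluing-formula machinery of \cite{BFK,Ca,Fo} cited in the introduction), so in the write-up I would state them as known and reference (\ref{E:2.5}) directly, treating (\ref{E:2.5}) as the substantive input.

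So the proof is essentially two sentences: by (\ref{E:2.5}) the $\nu$-th $\lambda$-derivative of $\log \Det(\Delta^q_{M,\Abs}+\lambda) - \log \Det(\Delta^q_{M,\Dir}+\lambda) - \ln \Det Q^q_{\Abs}(\lambda)$ vanishes identically in $\lambda$; hence this function is a polynomial in $\lambda$ of degree at most $\nu - 1 = [\frac{m-1}{2}]$, and writing that polynomial as $P^q(\lambda) = \sum_{k=0}^{[\frac{m-1}{2}]} a_k \lambda^k$ gives the claim. The only real obstacle is making rigorous the chain of trace manipulations in (\ref{E:2.5}) — in particular, the differentiation-of-$\log\Det$ step and the trace-class justification — but these are supplied by the cited references and by the cyclicity of the trace together with (\ref{E:2.2}), (\ref{E:2.3}), and (\ref{E:2.4}) established just above; no genuinely new estimate is needed.
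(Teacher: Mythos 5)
Your proof is correct and is essentially the paper's own argument: the paper derives eq.\ (\ref{E:2.5}) and then immediately concludes Lemma \ref{Lemma:2.2} by exactly the observation you make, namely that the $\nu$-th $\lambda$-derivative of the difference vanishes, so the difference is a polynomial of degree at most $\nu-1=[\frac{m-1}{2}]$. Your additional remarks on the trace-class and differentiation-under-the-trace justifications are consistent with the references the paper leans on and do not change the route.
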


To determine the coefficients $a_{k}$, we are going to consider the asymptotic expansion of each term in Lemma \ref{Lemma:2.2} for $\lambda \rightarrow \infty$. When $t \rightarrow 0^{+}$, it is well known \cite{Gi1} that for some ${\frak a}_{j}$, ${\frak b}_{j} \in {\mathbb R}$,

\begin{eqnarray}   \label{E:2.6}
\Tr e^{- t \Delta^{q}_{M, \Abs}} & \sim & \sum_{j=0}^{\infty} {\frak a}_{j} t^{- \frac{m-j}{2}}, \qquad
\Tr e^{- t \Delta^{q}_{M, \Dir}} ~ \sim ~ \sum_{j=0}^{\infty} {\frak b}_{j} t^{- \frac{m-j}{2}} .
\end{eqnarray}

\noindent
It is straightforward ((5.1) in \cite{Vo}, Lemma 2.1 in \cite{KL2}) that for $\lambda \rightarrow \infty$,

\begin{eqnarray}    \label{E:2.7}
& & \ln \Det \left( \Delta^{q}_{M, \Abs} + \lambda \right) - \ln \Det \left( \Delta^{q}_{M, \Dir} + \lambda \right) ~ \sim ~
- \sum_{\stackrel{j=0}{j\neq m}}^{N} ({\frak a}_{j} - {\frak b}_{j}) \frac{d}{ds} \left.\left( \frac{\Gamma(s - \frac{m-j}{2})}{\Gamma(s)} \right)\right|_{s=0} \lambda^{\frac{m-j}{2}}    \\
& & \hspace{0.5 cm}  + ~ ({\frak a}_{m} - {\frak b}_{m}) \ln \lambda ~ + ~ \sum_{j=0}^{m-1} ({\frak a}_{j} - {\frak b}_{j})\left.
\left( \frac{\Gamma(s - \frac{m-j}{2})}{\Gamma(s)} \right)\right|_{s=0} \lambda^{\frac{m-j}{2}} \ln \lambda ~ + ~ O(\lambda^{-\frac{N + 1 - m}{2}}),   \nonumber
\end{eqnarray}

\noindent
where we note that the constant term does not appear.
Since $Q^{q}_{\Abs}(\lambda)$ is an elliptic $\Psi$DO of order 1 with parameter of weight $2$,
it is shown in the Appendix of \cite{BFK} that
for $\lambda \rightarrow \infty$,
$\ln \Det Q^{q}_{\Abs}(\lambda)$ has the following asymptotic expansion,

\begin{eqnarray}   \label{E:2.8}
\ln \Det Q^{q}_{\Abs}(\lambda) & \sim & \sum_{j=0}^{\infty} \pi_{j} \lambda^{\frac{m-1-j}{2}} + \sum_{j=0}^{m-1} q_{j} \lambda^{\frac{m-1-j}{2}} \ln \lambda,
\end{eqnarray}

\noindent
where $\pi_{j}$ and $q_{j}$ are locally computable as follows. For a fixed local coordinate system we denote the homogeneous symbols of $Q^{q}_{\Abs}(\lambda)$
and its resolvent $(\mu - Q^{q}_{\Abs}(\lambda))^{-1}$ by

\begin{eqnarray}    \label{E:2.9}
& & \sigma(Q^{q}_{\Abs}(\lambda))(y, \xi, \lambda) ~ \sim ~ \sum_{j=0}^{\infty} \widetilde{\alpha}_{1-j} (y, \xi, \lambda),  \\
& & \sigma\left(\left(\mu - Q^{q}_{\Abs}(\lambda)\right)^{-1}\right)(y, \xi, \lambda, \mu) ~ \sim ~ \sum_{j=0}^{\infty} \widetilde{r}_{-1-j} (y, \xi, \lambda, \mu).   \nonumber
\end{eqnarray}

\noindent
The densities $\pi_{j}(y)$ and $q_{j}(y)$ are computed as follows (Appendix of \cite{BFK}).

\begin{eqnarray}    \label{E:2.10}
& & \pi_{j}(y) ~ = ~ - \frac{\partial}{\partial s}\bigg|_{s=0} \left( \frac{1}{(2 \pi)^{m-1}} \int_{T_{y}^{\ast}Y}
\frac{1}{2 \pi i} \int_{\gamma} \mu^{-s} \Tr \widetilde{r}_{-1-j} \left(y, \xi, \frac{\lambda}{|\lambda|}, \mu\right) d\mu d \xi \right),       \\
& & q_{j}(y) ~ = ~ \frac{1}{2} ~ \left( \frac{1}{(2 \pi)^{m-1}} \int_{T_{y}^{\ast}Y}
\frac{1}{2 \pi i} \int_{\gamma} \mu^{-s} \Tr \widetilde{r}_{-1-j} \left(y, \xi, \frac{\lambda}{|\lambda|}, \mu\right) d\mu d \xi \right)\bigg|_{s=0}, \nonumber  \\
& & \pi_{j} ~ = ~ \int_{Y} \pi_{j}(y) ~ dy,
\qquad q_{j} ~ = ~ \int_{Y} q_{j}(y) ~ dy,   \nonumber
\end{eqnarray}

\noindent
where $dy$ is the volume form on $Y$ and $\gamma$ is a contour enclosing the poles of $\Tr \widetilde{r}_{-1-j} \left(y, \xi, \frac{\lambda}{|\lambda|}, \mu\right)$ counterclockwise.
Comparing the coefficients of $\lambda^{j}$, we have the following result.

\begin{lemma}  \label{Lemma:2.3}
\begin{eqnarray*}
& & a_{0} ~ = ~ - \pi_{m-1}, \qquad a_{k} ~ = ~ - \pi_{m-1-2k} - ({\frak a}_{m-2k} - {\frak b}_{m - 2k}) \left( \frac{d}{ds} \frac{\Gamma(s-k)}{\Gamma(s)} \right)\bigg|_{s=0}, \quad 1 \leq k \leq [(m-1)/2] , \\
& & q_{m-1} ~ = ~ {\frak a}_{m} - {\frak b}_{m}, \qquad q_{k} ~ = ~ ({\frak a}_{k+1} - {\frak b}_{k+1}) \left( \frac{\Gamma(s - \frac{m-k-1}{2})}{\Gamma(s)}   \right)\bigg|_{s=0}, \quad 0 \leq k \leq m-2 .
\end{eqnarray*}
\end{lemma}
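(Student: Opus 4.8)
The plan is to pin down all the constants by comparing, coefficient by coefficient, the two large-$\lambda$ asymptotic expansions available for the two sides of the identity in Lemma~\ref{Lemma:2.2}. Rewrite that identity in the form
\[
\sum_{k=0}^{[\frac{m-1}{2}]} a_{k}\lambda^{k} ~ = ~ \Big(\log\Det(\Delta^{q}_{M, \Abs}+\lambda) - \log\Det(\Delta^{q}_{M, \Dir}+\lambda)\Big) - \ln\Det Q^{q}_{\Abs}(\lambda),
\]
and substitute \eqref{E:2.7} for the bracketed difference and \eqref{E:2.8} for $\ln\Det Q^{q}_{\Abs}(\lambda)$. The left-hand side is a genuine polynomial: it carries no $\ln\lambda$, no fractional powers of $\lambda$, no negative powers, and has degree at most $[\frac{m-1}{2}]$. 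Hence in the combined asymptotic expansion of the right-hand side every coefficient of $\lambda^{p}$ and of $\lambda^{p}\ln\lambda$ must vanish, except the coefficients of $\lambda^{0},\lambda^{1},\dots,\lambda^{[\frac{m-1}{2}]}$, which are precisely $a_{0},a_{1},\dots,a_{[\frac{m-1}{2}]}$. The proof then consists in reading off these coefficients while keeping careful track of which exponents occur on each side.

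First I would treat the logarithmic terms. By \eqref{E:2.7} the logarithmic part of the difference of Laplacian determinants is $({\frak a}_{m}-{\frak b}_{m})\ln\lambda + \sum_{j=0}^{m-1}({\frak a}_{j}-{\frak b}_{j})\big(\tfrac{\Gamma(s-\frac{m-j}{2})}{\Gamma(s)}\big)\big|_{s=0}\lambda^{\frac{m-j}{2}}\ln\lambda$, whereas by \eqref{E:2.8} the logarithmic part of $\ln\Det Q^{q}_{\Abs}(\lambda)$ is $\sum_{j=0}^{m-1}q_{j}\lambda^{\frac{m-1-j}{2}}\ln\lambda$; these must agree. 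Matching the coefficient of $\lambda^{0}\ln\lambda$ gives $q_{m-1}={\frak a}_{m}-{\frak b}_{m}$. For $0\le j\le m-2$ the power $\lambda^{\frac{m-1-j}{2}}\ln\lambda$ on the right is matched by the index-$(j+1)$ term on the left, so $q_{j}=({\frak a}_{j+1}-{\frak b}_{j+1})\big(\tfrac{\Gamma(s-\frac{m-j-1}{2})}{\Gamma(s)}\big)\big|_{s=0}$. The only exponent occurring on the left with no counterpart on the right is $\lambda^{m/2}\ln\lambda$ (the $j=0$ term), and its coefficient $({\frak a}_{0}-{\frak b}_{0})\big(\tfrac{\Gamma(s-\frac{m}{2})}{\Gamma(s)}\big)\big|_{s=0}$ vanishes because ${\frak a}_{0}={\frak b}_{0}$ — the leading heat coefficient is the interior volume term and is insensitive to the boundary condition.

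Next, for the non-logarithmic integer powers: the constant term is absent from \eqref{E:2.7} (as recorded there) and the index $j=m$ is excluded from its sum, so the coefficient of $\lambda^{0}$ on the right-hand side comes only from $-\ln\Det Q^{q}_{\Abs}(\lambda)$, namely from the $j=m-1$ term of \eqref{E:2.8}; this yields $a_{0}=-\pi_{m-1}$. For $1\le k\le[\frac{m-1}{2}]$ the coefficient of $\lambda^{k}$ picks up the contribution $-({\frak a}_{m-2k}-{\frak b}_{m-2k})\frac{d}{ds}\big(\tfrac{\Gamma(s-k)}{\Gamma(s)}\big)\big|_{s=0}$ from the $j=m-2k$ term of \eqref{E:2.7} and the contribution $-\pi_{m-1-2k}$ from the $j=m-1-2k$ term of \eqref{E:2.8}, which is exactly the asserted formula for $a_{k}$. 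The remaining exponents — the half-integer powers and the strictly negative powers of $\lambda$ — must cancel between \eqref{E:2.7} and \eqref{E:2.8} by the exactness of the identity in Lemma~\ref{Lemma:2.2}, and they contribute nothing to the $a_{k}$. There is no deep obstacle here; the only delicate point is precisely this exponent bookkeeping — one must invoke ${\frak a}_{0}={\frak b}_{0}$ and the vanishing of the constant term in \eqref{E:2.7} to avoid a spurious $\lambda^{m/2}\ln\lambda$ term and a spurious contribution to $a_{0}$ — after which the comparison of coefficients is routine.
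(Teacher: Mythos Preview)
Your argument is correct and is exactly the approach the paper takes: the paper's entire proof is the one-line remark ``Comparing the coefficients of $\lambda^{j}$, we have the following result,'' and you have simply spelled out that comparison in detail. Your observations that the constant term is absent from \eqref{E:2.7} and that ${\frak a}_{0}={\frak b}_{0}$ (needed to dispose of the unmatched $\lambda^{m/2}\ln\lambda$ term) are precisely the small bookkeeping points implicit in the paper's statement.
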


\vspace{0.2 cm}
\noindent
Since the heat coefficients are quite well known \cite{Gi3,Ki}, we are going to concentrate on computing the $\pi_{k}$'s to determine the coefficients $a_{k}$.
We note that the coefficients $\pi_{k}$ are expressed by some curvature tensors including the scalar curvatures and principal curvatures of $Y$ in $M$ like heat coefficients (cf.\cite{PS}).
In Section 3 we are going to compute $\pi_{1}$ and $\pi_{2}$ along these lines when $\Dim M = 2, ~ 3$.

Before going further, we make one observation.
If $M$ has a product structure near $Y$ so that $\Delta^{q}_{M}$ is $- \partial_{y_{m}}^{2} + \Delta_{Y}$ on a collar neighborhood of $Y$, it is known that
$~Q^{q}_{\Abs}(\lambda) = \sqrt{\Delta_{Y} + \lambda} ~ + ~ a ~ smoothing ~ operator $ (cf. \cite{Le2,PW}).
In this case, $\ln \Det Q^{q}_{\Abs}(\lambda)$ and $\ln \Det \sqrt{\Delta_{Y} + \lambda}$ have the same asymptotic expansions for $\lambda \rightarrow \infty$, which is shown in the Appendix of \cite{BFK}.
Since $\ln \Det \sqrt{\Delta_{Y} + \lambda} = \frac{1}{2} \ln \Det (\Delta_{Y} + \lambda)$, the constant term in the asymptotic expansion of $\ln \Det Q^{q}_{\Abs}(\lambda)$ is zero (cf.(\ref{E:2.8})),
which shows that $a_{0} = 0$.

We next discuss the asymptotic behavior of each term in Lemma \ref{Lemma:2.2} for $\lambda \rightarrow 0$.
We first note that

\begin{eqnarray}   \label{E:2.11}
\ln \Det (\Delta^{q}_{M, \Dir} + \lambda) & = & \ln \Det \Delta^{q}_{M, \Dir} + O(\lambda).
\end{eqnarray}

\noindent
In view of (\ref{E:1.9}), we let $\Dim \Ker \Delta^{q}_{M, \Abs} = \Dim \Ker Q^{q}_{\Abs}(0) = \ell_{q}$ and
$\{ \psi_{1}(0), \cdots, \psi_{\ell_{q}}(0) \}$ be an orthonormal basis for $ \Ker \Delta^{q}_{M, \Abs}$.
Considering $\lambda \in {\Bbb C}-(-\infty , 0)$, $Q^{q}_{\Abs}(\lambda)$ is a self-adjoint holomorphic family of type (A) in the sense of T. Kato (for the definition see p. 375 of \cite{Ka})
and Theorem 3.9 on p. 392 of \cite{Ka} shows that there exist holomorphic families $\{ \theta_{j}(\lambda) \mid j = 1, 2, \cdots \}$ and $\{ \phi_{j}(\lambda) \mid j = 1, 2, \cdots \}$ of
eigenvalues and corresponding orthonormal eigensections of $Q^{q}_{\Abs}(\lambda)$ such that $0 < \theta_{1}(\lambda) \leq \cdots \leq \theta_{\ell_{q}}(\lambda) < \theta_{\ell_{q}+1}(\lambda) \leq \cdots $ and

\begin{eqnarray}     \label{E:2.12}
\parallel \phi_{j}(\lambda) \parallel = 1, \qquad \lim_{\lambda \rightarrow 0} \theta_{j}(\lambda) = 0 \quad \text{for} \quad 1 \leq j \leq \ell_{q}.
\end{eqnarray}

\noindent
This leads to

\begin{eqnarray}    \label{E:2.13}
& & \ln \Det (\Delta^{q}_{M, \Abs} + \lambda) ~ = ~ \ell_{q} \ln \lambda ~ + ~ \ln \Det^{\ast} \Delta^{q}_{M, \Abs} ~ + ~ O(\lambda) ,  \\
& & \ln \Det Q^{q}_{\Abs}(\lambda) ~ = ~ \ln \theta_{1}(\lambda) \cdots \theta_{\ell_{q}}(\lambda) ~ + ~ \ln \Det^{\ast} Q^{q}_{\Abs}(0) ~ + ~ O(\lambda). \nonumber
\end{eqnarray}

\noindent
For each $\phi_{j}(\lambda)$, we denote $\Phi_{j}(\lambda) := {\mathcal P}_{\Abs}^{q}(\lambda) \phi_{j}(\lambda)$.
Then,

\begin{eqnarray}     \label{E:2.14}
Q^{q}_{\Abs}(\lambda) (\phi_{j}(\lambda)) & = & - i^{\ast} \left( \iota_{\frac{\partial}{\partial_{u}}} d \Phi_{j}(\lambda) \right) ~ = ~ \theta_{j}(\lambda) \phi_{j}(\lambda),
\end{eqnarray}

\noindent
and hence $\{ \Phi_{1}(0), \cdots, \Phi_{\ell_{q}}(0) \}$ is also a basis for $\ker \Delta^{q}_{M, \Abs}$.
The Green Theorem (\ref{E:1.6}) shows that

\begin{eqnarray*}
0 & = & \langle (\Delta^{q}_{M} + \lambda) \Phi_{j}(\lambda),  \Phi_{k}(0) \rangle  =
\langle \Delta^{q}_{M} \Phi_{j}(\lambda),  \Phi_{k}(0) \rangle  +  \lambda  \langle \Phi_{j}(\lambda),  \Phi_{k}(0) \rangle   \\
& = & \langle d \Phi_{j}(\lambda),  d \Phi_{k}(0) \rangle  +  \langle \delta \Phi_{j}(\lambda),  \delta \Phi_{k}(0) \rangle  -  \int_{Y} i^{\ast} \left( \Phi_{k}(0) \wedge \star_{M} d \Phi_{j}(\lambda) - \delta \Phi_{j}(\lambda) \wedge \star_{M} \Phi_{k}(0) \right)    \nonumber \\
& & \hspace{1.0 cm}  + ~ \lambda ~ \langle \Phi_{j}(\lambda),  \Phi_{k}(0) \rangle
\nonumber \\
& = & - \langle \phi_{k}(0), ~ Q^{q}_{\Abs}(\lambda) \phi_{j}(\lambda) \rangle_{Y}  +  \lambda  \langle \Phi_{j}(\lambda),  \Phi_{k}(0) \rangle_{M}   \nonumber  \\
& = & -  \theta_{j}(\lambda) \langle \phi_{k}(0), \phi_{j}(\lambda) \rangle_{Y}  +  \lambda  \langle \Phi_{j}(\lambda),  \Phi_{k}(0) \rangle_{M},          \nonumber
\end{eqnarray*}

\noindent
which leads to

\begin{eqnarray}   \label{E:2.15}
\lim_{\lambda \rightarrow 0} \frac{\theta_{j}(\lambda)}{\lambda} \delta_{jk} & = &
\langle \Phi_{j}(0), ~ \Phi_{k}(0) \rangle_{M}.
\end{eqnarray}

\noindent
We define $\ell_{q} \times \ell_{q}$ matrices ${\mathcal R} = \left( r_{ij} \right)$ and
${\mathcal S} = \left( s_{ij} \right)$ by

\begin{eqnarray}   \label{E:2.16}
r_{ij} & = & \langle \Phi_{i}(0), ~ \psi_{j}(0) \rangle_{M}, \qquad  s_{ij} ~ = ~ \langle \psi_{i}(0)|_{Y}, ~ \psi_{j}(0)|_{Y} \rangle_{Y},
\end{eqnarray}

\noindent
where $\psi_{i}(0)|_{Y}$ is equal to $i^{\ast} \psi_{i}(0)$ since $\psi_{i}(0) \in \Omega^{q}_{\Abs}(M)$.
Since $\Phi_{i}(0) = \sum_{k} r_{ik} \psi_{k}(0)$, we have

\begin{eqnarray}   \label{E:2.17}
\langle \Phi_{i}(0), ~ \Phi_{j}(0) \rangle_{M} & = & \sum_{a, b =1}^{\ell_{q}} \langle r_{ia} \psi_{a}(0), ~ r_{jb} \psi_{b}(0) \rangle ~ = ~
\left( {\mathcal R} {\mathcal R}^{T} \right)_{ij} ~ = ~ \lim_{\lambda \rightarrow 0} \frac{\theta_{i}(\lambda)}{\lambda} \delta_{ij},
\end{eqnarray}

\noindent
which shows that ${\mathcal R} {\mathcal R}^{T}$ is a diagonal matrix.
The above equalities show that

\begin{eqnarray}    \label{E:2.18}
\lim_{\lambda \rightarrow 0} \frac{\theta_{1}(\lambda) \cdots \theta_{\ell_{q}}(\lambda)}{\lambda^{\ell_{q}}} & = &
 \Pi_{j=1}^{\ell_{q}} \left( {\mathcal R} {\mathcal R}^{T} \right)_{jj}
~ = ~ \ddet ({\mathcal R} {\mathcal R}^{T}) ~ = ~ \ddet ({\mathcal R}^{2}).
\end{eqnarray}

\noindent
Since $\phi_{i}(0) = \Phi_{i}(0)|_{Y} = \sum_{k=1}^{\ell_{q}} r_{ik} \psi_{k}(0)|_{Y}$, we have

\begin{eqnarray*}
\psi_{i}(0)|_{Y} & = & \sum_{k=1}^{\ell_{q}} \langle \psi_{i}(0)|_{Y}, ~ \phi_{k}(0) \rangle_{Y} ~ \phi_{k}(0) ~ = ~
\sum_{k, a, b=1}^{\ell_{q}}  \langle \psi_{i}(0)|_{Y}, ~ r_{ka} \psi_{a}(0)|_{Y} \rangle_{Y} ~ r_{kb} \psi_{b}(0)|_{Y}  \\
& = & \sum_{b=1}^{\ell_{q}} \left( {\mathcal S} {\mathcal R}^{T} {\mathcal R} \right)_{ib} ~ \psi_{b}(0)|_{Y},
\end{eqnarray*}

\noindent
which shows that ${\mathcal S} {\mathcal R}^{T} {\mathcal R} = \Id$.
Hence,

\begin{eqnarray}     \label{E:2.19}
\ddet {\mathcal R}^{2} = \frac{1}{\ddet {\mathcal S}}.
\end{eqnarray}

\vspace{0.2 cm}
\noindent
From (\ref{E:2.18}) and (\ref{E:2.19}), we obtain the following result.

\begin{theorem}  \label{Theorem:2.4}
Let $M$ be an oriented $m$-dimensional compact Riemannian manifold with boundary $Y$. Then, for $0 \leq q \leq m-1$,
\begin{eqnarray*}
\ln \Det^{\ast} \Delta^{q}_{M, \Abs} - \ln \Det \Delta^{q}_{M, \Dir} ~ = ~ a_{0} - \ln \ddet {\mathcal S} + \ln \Det^{\ast} Q^{q}_{\Abs}(0).
\end{eqnarray*}

\noindent
Here $ a_{0}$ is the constant term in the asymptotic expansion of $- \ln \Det Q^{q}_{\Abs}(\lambda)$ for $\lambda \rightarrow \infty$.
If $M$ has a product structure near $Y$ so that $\Delta^{q}_{M, \Abs}$ is $- \partial_{y_{m}}^{2} + \Delta_{Y}$ on a collar neighborhood of $Y$, then $a_{0} = 0$.
\end{theorem}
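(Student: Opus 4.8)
The plan is to obtain the identity by letting $\lambda \to 0^{+}$ in the relation of Lemma~\ref{Lemma:2.2} and matching the divergent and finite parts of the two sides. All of the analytic content is already in place: Lemma~\ref{Lemma:2.2} supplies the $\lambda$-dependent identity, and (\ref{E:2.11}), (\ref{E:2.13}), (\ref{E:2.18}), (\ref{E:2.19}) record the precise behaviour of each ingredient as $\lambda \to 0$; what remains is to assemble these.

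On the right-hand side of Lemma~\ref{Lemma:2.2}, the polynomial $\sum_{k=0}^{[(m-1)/2]} a_{k}\lambda^{k}$ tends to $a_{0}$. For $\ln\Det Q^{q}_{\Abs}(\lambda)$ I would invoke the second line of (\ref{E:2.13}) and split $\ln\bigl(\theta_{1}(\lambda)\cdots\theta_{\ell_{q}}(\lambda)\bigr) = \ell_{q}\ln\lambda + \ln\frac{\theta_{1}(\lambda)\cdots\theta_{\ell_{q}}(\lambda)}{\lambda^{\ell_{q}}}$; by (\ref{E:2.18}) the last quotient converges to $\ddet(\mathcal{R}\mathcal{R}^{T}) = \ddet\mathcal{R}^{2}$, which equals $1/\ddet\mathcal{S}$ by (\ref{E:2.19}). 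Thus, as $\lambda \to 0$, $\ln\Det Q^{q}_{\Abs}(\lambda) - \ell_{q}\ln\lambda \longrightarrow -\ln\ddet\mathcal{S} + \ln\Det^{\ast}Q^{q}_{\Abs}(0)$. On the left-hand side, the first line of (\ref{E:2.13}) gives the leading $\ell_{q}\ln\lambda$ for $\ln\Det(\Delta^{q}_{M,\Abs}+\lambda)$ and (\ref{E:2.11}) handles $\ln\Det(\Delta^{q}_{M,\Dir}+\lambda)$, so $\ln\Det(\Delta^{q}_{M,\Abs}+\lambda) - \ln\Det(\Delta^{q}_{M,\Dir}+\lambda) - \ell_{q}\ln\lambda \longrightarrow \ln\Det^{\ast}\Delta^{q}_{M,\Abs} - \ln\Det\Delta^{q}_{M,\Dir}$.

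Subtracting $\ell_{q}\ln\lambda$ from both sides of the identity in Lemma~\ref{Lemma:2.2} — which is legitimate because the two divergences have the same coefficient, $\Dim\Ker\Delta^{q}_{M,\Abs} = \Dim\Ker Q^{q}_{\Abs}(0) = \ell_{q}$ by (\ref{E:1.9}) — and then letting $\lambda \to 0$ yields $\ln\Det^{\ast}\Delta^{q}_{M,\Abs} - \ln\Det\Delta^{q}_{M,\Dir} = a_{0} - \ln\ddet\mathcal{S} + \ln\Det^{\ast}Q^{q}_{\Abs}(0)$. That $a_{0}$ is the constant term of $-\ln\Det Q^{q}_{\Abs}(\lambda)$ as $\lambda\to\infty$ is read off from (\ref{E:2.7})--(\ref{E:2.8}): since the difference in (\ref{E:2.7}) has no constant term, matching $\lambda^{0}$-coefficients in Lemma~\ref{Lemma:2.2} forces $a_{0}$ to equal minus the $\lambda^{0}$-coefficient of $\ln\Det Q^{q}_{\Abs}(\lambda)$ (equivalently $a_{0} = -\pi_{m-1}$ as in Lemma~\ref{Lemma:2.3}). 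For the product-structure claim I would simply quote the observation made just before the theorem: when $\Delta^{q}_{M}$ is $-\partial_{y_{m}}^{2}+\Delta_{Y}$ near $Y$, one has $Q^{q}_{\Abs}(\lambda) = \sqrt{\Delta_{Y}+\lambda} + (\text{smoothing})$, so $\ln\Det Q^{q}_{\Abs}(\lambda)$ and $\frac{1}{2}\ln\Det(\Delta_{Y}+\lambda)$ have the same $\lambda\to\infty$ expansion, whose constant term vanishes; hence $a_{0}=0$.

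The argument is mostly bookkeeping, and I do not expect a genuine obstacle. The one point requiring care is ensuring the two $\ln\lambda$ singularities at $\lambda = 0$ cancel exactly, which rests on the dimension identity (\ref{E:1.9}); and one should check that the two descriptions of $a_{0}$ — the constant produced by Lemma~\ref{Lemma:2.2} in the $\lambda\to 0$ limit versus the constant term of $-\ln\Det Q^{q}_{\Abs}(\lambda)$ as $\lambda\to\infty$ — coincide, which follows from the uniqueness of the polynomial $P^{q}(\lambda)$ in Lemma~\ref{Lemma:2.2} together with Lemma~\ref{Lemma:2.3}.
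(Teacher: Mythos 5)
Your proposal is correct and follows essentially the same route as the paper: the authors also obtain Theorem \ref{Theorem:2.4} by letting $\lambda \to 0$ in Lemma \ref{Lemma:2.2}, cancelling the $\ell_{q}\ln\lambda$ divergences via (\ref{E:2.11}) and (\ref{E:2.13}) and identifying the finite part of $\ln\bigl(\theta_{1}(\lambda)\cdots\theta_{\ell_{q}}(\lambda)\bigr)$ through (\ref{E:2.18}) and (\ref{E:2.19}), with $a_{0}$ pinned down by the $\lambda\to\infty$ comparison in Lemma \ref{Lemma:2.3} and the product-structure observation quoted verbatim. No gaps.
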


\vspace{0.2 cm}

\begin{corollary}  \label{Corollary:2.5}
When $q=0$, $\Delta^{0}_{M, \Abs}$ is the Laplacian acting on smooth functions with the Neumann boundary condition on $Y$.
Then $\ell_{0} = \Dim \Ker \Delta^{0}_{M, \Abs} = 1$ and ${\mathcal S} = \left( \frac{\ell(Y)}{V(M)} \right)$, where $V(M)$ and $\ell(Y)$ are volumes of $M$ and $Y$, respectively.
In this case, Theorem \ref{Theorem:2.4} is rewritten as

\begin{eqnarray*}
 \ln \Det^{\ast} \Delta^{0}_{M, \Abs} - \ln \Det \Delta^{0}_{M, \Dir} & = & a_{0} + \ln \frac{V(M)}{\ell(Y)} + \ln \Det^{\ast} Q^{0}_{\Abs}(0).
\end{eqnarray*}
\end{corollary}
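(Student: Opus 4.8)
The plan is to specialize Theorem~\ref{Theorem:2.4} to the case $q=0$; the only thing that needs to be done is to identify $\Ker \Delta^{0}_{M,\Abs}$ explicitly and to compute the resulting $1\times 1$ matrix $\mathcal{S}$ from \eqref{E:2.16}.

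First I would observe that for $q=0$ the absolute boundary condition reduces to the Neumann condition, so $\Ker \Delta^{0}_{M,\Abs}$ consists of the harmonic functions on $M$ with vanishing normal derivative on $Y$; equivalently, by \eqref{E:1.7}, $\Ker \Delta^{0}_{M,\Abs} \cong H^{0}(M)$. Since $M$ is connected (as is implicit in the assertion $\ell_0=1$), this space is one-dimensional, spanned by the constant functions, so $\ell_{0} = \Dim \Ker \Delta^{0}_{M,\Abs} = 1$.

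Next I would pin down the $L^{2}(M)$-normalized generator $\psi_{1}(0)$ of this kernel. Writing $\psi_{1}(0)\equiv c$ for a constant $c$, the normalization $\parallel \psi_{1}(0)\parallel_{M}=1$ forces $c^{2}V(M)=1$, i.e. $c=V(M)^{-1/2}$. Because $\psi_{1}(0)\in\Omega^{0}_{\Abs}(M)$, its restriction to $Y$ is $\psi_{1}(0)|_{Y}=i^{\ast}\psi_{1}(0)=c$, so from the definition \eqref{E:2.16} of $\mathcal{S}$,
\[
 s_{11} \;=\; \langle \psi_{1}(0)|_{Y},\,\psi_{1}(0)|_{Y}\rangle_{Y} \;=\; c^{2}\int_{Y} dy \;=\; c^{2}\,\ell(Y) \;=\; \frac{\ell(Y)}{V(M)},
\]
hence $\mathcal{S}=\left(\tfrac{\ell(Y)}{V(M)}\right)$ and $\ln\ddet\mathcal{S}=\ln\tfrac{\ell(Y)}{V(M)}=-\ln\tfrac{V(M)}{\ell(Y)}$.

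Finally I would substitute this into the identity of Theorem~\ref{Theorem:2.4} evaluated at $q=0$:
\[
 \ln \Det^{\ast} \Delta^{0}_{M,\Abs} - \ln \Det \Delta^{0}_{M,\Dir} \;=\; a_{0} - \ln\ddet\mathcal{S} + \ln \Det^{\ast} Q^{0}_{\Abs}(0) \;=\; a_{0} + \ln\frac{V(M)}{\ell(Y)} + \ln \Det^{\ast} Q^{0}_{\Abs}(0),
\]
which is the claimed formula. I do not expect a genuine obstacle here; the only point deserving care is that the kernel generator is normalized in $L^{2}(M)$ rather than in $L^{2}(Y)$, and it is exactly this that makes $s_{11}$ equal to $\ell(Y)/V(M)$ instead of $1$, producing the ratio $V(M)/\ell(Y)$ in the conclusion.
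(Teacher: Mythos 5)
Your proposal is correct and matches the paper's (implicit) reasoning: the corollary is stated without separate proof precisely because it amounts to noting that the $L^{2}(M)$-normalized constant function spans $\Ker\Delta^{0}_{M,\Abs}$, computing $s_{11}=\ell(Y)/V(M)$, and substituting into Theorem \ref{Theorem:2.4}. Your remark about the $L^{2}(M)$ versus $L^{2}(Y)$ normalization is exactly the point that produces the ratio $V(M)/\ell(Y)$.
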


We should mention that
the constant term corresponding to $a_{0}$ in the BFK-gluing formula of zeta-determinants is zero when $M$ is an even dimensional manifold since the density is an odd function with respect to $\xi$.
However, the density for $a_{0}$ in Theorem \ref{Theorem:2.4} need not be an odd function so that $a_{0}$ may not be zero even though $M$ is an even dimensional manifold.
In the next section we are going to compute $a_{0}$ precisely when the dimension of $M$ is $2$ and $3$.

In the remaining part of this section, we are going to discuss the values of zeta functions at zero by considering the metric rescaling
from $g$ to $c^{2} g$ for $c > 0$ on $M$. It is well known \cite{Be} that

\begin{eqnarray}   \label{E:2.20}
\Delta^{q}_{M}(c^{2} g) & = & c^{-2} \Delta^{q}_{M}(g), \qquad \Delta^{q}_{M}(c^{2} g) + \lambda ~ = ~ c^{-2} \left( \Delta^{q}_{M}(g) + c^{2} \lambda \right).
\end{eqnarray}

\noindent
Lemma \ref{Lemma:2.2} is rewritten as

\begin{eqnarray}   \label{E:2.21}
 \ln \Det \left( \Delta^{q}_{M, \Abs}(c^{2} g) + \lambda \right) - \ln \Det \left( \Delta^{q}_{M, \Dir}(c^{2} g) + \lambda \right)
& = & P^{q}_{c^{2} g}(\lambda) + \ln \Det Q^{q}_{\Abs, c^{2} g}(\lambda)     \\
& = & \sum_{j=0}^{[(m-1)/2]} a_{j}(c^{2} g) \lambda^{j} ~ + ~ \ln \Det Q^{q}_{\Abs, c^{2} g}(\lambda),  \nonumber
\end{eqnarray}

\noindent
where $P^{q}_{g}(\lambda) = \sum_{j=0}^{[\frac{m-1}{2}]} a_{j}(g) \lambda^{j}$ and $P^{q}_{c^{2} g}(\lambda) = \sum_{j=0}^{[\frac{m-1}{2}]} a_{j}(c^{2} g) \lambda^{j}$.
The Dirichlet-to-Neumann operator $Q^{q}_{\Abs, c^{2} g}(\lambda)$ is described as follows.
For $\varphi \in \Omega^{q}(Y)$, we choose $\phi \in \Omega^{q}(M)$ such that

\begin{eqnarray*}
\left( \Delta^{q}_{M}(c^{2} g) + \lambda  \right) \phi & = & c^{-2} \left( \Delta^{q}_{M}(g) + c^{2} \lambda \right) \phi ~ = ~ 0, \qquad
i^{\ast} \phi  = \varphi, \qquad i^{\ast} \left( \iota_{\frac{1}{c} \frac{\partial}{\partial u}} \phi \right) = 0.
\end{eqnarray*}

\noindent
Then, $Q^{q}_{\Abs, c^{2} g}(\lambda) f$ is defined by

\begin{eqnarray*}
Q^{q}_{\Abs, c^{2} g}(\lambda) \varphi & = & - i^{\ast} \left( \iota_{\frac{1}{c} \frac{\partial}{\partial u}} d \phi \right)  ~ = ~ - \frac{1}{c} ~ i^{\ast} \left( \iota_{\frac{\partial}{\partial u}} d \phi \right)
 ~ = ~\frac{1}{c} Q^q_{\Abs, g}(c^{2} \lambda) \varphi,
\end{eqnarray*}

\noindent
which shows that

\begin{eqnarray}  \label{E:2.22}
Q^q_{\Abs, c^{2} g}(\lambda) & = & \frac{1}{c} Q^q_{\Abs, g}(c^{2} \lambda) .
\end{eqnarray}

\noindent
From (\ref{E:2.20}) and (\ref{E:2.22}), it follows that

\begin{eqnarray}                \label{E:2.23}
& & \ln \Det \left( \Delta^{q}_{M, \Abs/\Dir} (c^{2} g) + \lambda \right) ~ = ~ - 2 \ln c \cdot \zeta_{( \Delta^{q}_{M, \Abs/\Dir}(g) + c^{2} \lambda)}(0)
~ + ~ \ln \Det \left( \Delta^{q}_{M, \Abs/\Dir}(g) + c^{2} \lambda \right) ,  \nonumber \\
& & \ln \Det Q^{q}_{\Abs, c^{2} g}(\lambda)  ~ = ~ - \ln c \cdot \zeta_{Q^{q}_{\Abs, g}(c^{2} \lambda)}(0) ~ + ~ \ln \Det Q^{q}_{\Abs, g}(c^{2} \lambda).
\end{eqnarray}

\noindent
We use the above equalities to rewrite (\ref{E:2.21}) as

\begin{eqnarray}   \label{E:2.24}
& & - 2 \ln c \cdot \left\{ \zeta_{( \Delta^{q}_{M, \Abs}(g) + c^{2} \lambda )}(0) - \zeta_{( \Delta^{q}_{M, \Dir}(g) + c^{2} \lambda )}(0) \right\}  \nonumber\\
& & \hspace{4.0cm}+
 \left\{ \ln \Det ( \Delta^{q}_{M, \Abs}(g) + c^{2} \lambda ) - \ln \Det ( \Delta^{q}_{M, \Dir}(g) + c^{2} \lambda )  \right\}    \nonumber \\
& & =  - 2 \ln c \cdot \left\{ \zeta_{\left( \Delta^{q}_{M, \Abs}(g) + c^{2} \lambda \right)}(0) - \zeta_{\left( \Delta^{q}_{M, \Dir}(g) + c^{2} \lambda \right)}(0) \right\}
~ + ~ P^{q}_{g}(c^{2} \lambda) ~ + ~ \ln \Det Q^{q}_{\Abs, g}(c^{2} \lambda)      \nonumber  \\
& & =  P^{q}_{c^{2} g}(\lambda) - \ln c \cdot \zeta_{Q^{q}_{\Abs, g}(c^{2} \lambda)}(0) ~ + ~ \ln \Det Q^{q}_{\Abs, g}(c^{2} \lambda),
\end{eqnarray}

\noindent
which leads to the following result.

\begin{lemma}   \label{Lemma:2.6}
\begin{eqnarray*}
& & -  \ln c \cdot \left\{ 2 \left( \zeta_{\left( \Delta^{q}_{M, \Abs}(g) + c^{2} \lambda \right)}(0) - \zeta_{\left( \Delta^{q}_{M, \Dir}(g) + c^{2} \lambda \right)}(0) \right) - \zeta_{Q^{q}_{\Abs, g}(c^{2} \lambda)}(0) \right\} ~ = ~
P^{q}_{c^{2} g}(\lambda) ~ - ~  P^{q}_{g}(c^{2} \lambda) \\
& & \hspace{9.0 cm}  = ~ \sum_{j=0}^{[(m-1)/2]} \left( a_{j}(c^{2} g) - a_{j}(g) c^{2j} \right) \lambda^{j}.
\end{eqnarray*}
\end{lemma}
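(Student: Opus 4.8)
The plan is to extract the claimed identity directly from the chain of equalities \eqref{E:2.24}, so that no new analytic input is needed beyond what has already been assembled. First I would record that all three lines of \eqref{E:2.24} represent the single quantity $\ln \Det(\Delta^{q}_{M,\Abs}(c^{2}g)+\lambda) - \ln \Det(\Delta^{q}_{M,\Dir}(c^{2}g)+\lambda)$: the first line is this quantity rewritten through the conformal rescaling relations \eqref{E:2.20} and \eqref{E:2.22} together with the determinant rescaling formula \eqref{E:2.23}; the second line computes it by applying Lemma \ref{Lemma:2.2} for the metric $g$ with parameter $c^{2}\lambda$; and the third line computes it by applying \eqref{E:2.21}, which is Lemma \ref{Lemma:2.2} for the metric $c^{2}g$, and then re-expressing $\ln \Det Q^{q}_{\Abs,c^{2}g}(\lambda)$ via \eqref{E:2.23}. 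Consequently the second and third lines coincide.

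The second step is pure algebra. The term $\ln \Det Q^{q}_{\Abs,g}(c^{2}\lambda)$ occurs identically in the second and third lines of \eqref{E:2.24}, so it cancels, and separating the part linear in $\ln c$ from the polynomial part gives
\[
P^{q}_{c^{2}g}(\lambda) - P^{q}_{g}(c^{2}\lambda) = -\ln c \cdot \Big\{ 2\big( \zeta_{(\Delta^{q}_{M,\Abs}(g)+c^{2}\lambda)}(0) - \zeta_{(\Delta^{q}_{M,\Dir}(g)+c^{2}\lambda)}(0) \big) - \zeta_{Q^{q}_{\Abs,g}(c^{2}\lambda)}(0) \Big\},
\]
which is the first asserted equality. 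For the second, I would substitute $P^{q}_{c^{2}g}(\lambda) = \sum_{j=0}^{[(m-1)/2]} a_{j}(c^{2}g)\lambda^{j}$ and $P^{q}_{g}(c^{2}\lambda) = \sum_{j=0}^{[(m-1)/2]} a_{j}(g)(c^{2}\lambda)^{j} = \sum_{j=0}^{[(m-1)/2]} a_{j}(g) c^{2j}\lambda^{j}$ and collect the coefficient of each power $\lambda^{j}$.

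I do not expect a genuine obstacle here, since the substance of the lemma is already encoded in \eqref{E:2.24}; the only point deserving attention — and the one that is used in deriving \eqref{E:2.23} — is the behavior of the zeta-determinant under multiplying the operator by a positive constant, namely $\ln \Det(c^{-2}\mathcal{D}) = -2\ln c \cdot \zeta_{\mathcal{D}}(0) + \ln \Det \mathcal{D}$ for a Laplace-type operator $\mathcal{D}$, and the analogue with exponent $-1$ for the first-order operator $Q^{q}_{\Abs}$. One should also note that when $\lambda = 0$ or $c^{2}\lambda = 0$ the relevant determinants are the modified ones $\Det^{\ast}$; this changes nothing in the computation above, as the manipulation is carried out termwise in the spectral parameter and neither the numbers $\zeta_{\mathcal{D}}(0)$ nor the polynomials $P^{q}$ are affected.
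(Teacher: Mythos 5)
Your proposal is correct and follows the paper's own route: the paper derives Lemma \ref{Lemma:2.6} precisely by equating the second and third expressions in the chain (\ref{E:2.24}) (built from (\ref{E:2.20})--(\ref{E:2.23}) and the two instances of Lemma \ref{Lemma:2.2}), cancelling the common term $\ln \Det Q^{q}_{\Abs, g}(c^{2}\lambda)$, and rearranging. Your expansion of $P^{q}_{g}(c^{2}\lambda)$ in powers of $\lambda$ for the second equality is the same routine step the paper leaves implicit.
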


\vspace{0.2 cm}

From (\ref{E:2.9}) we have the following.

\begin{eqnarray}          \label{E:2.25}
& & \sigma \left( \left( \mu - Q^{q}_{\Abs, c^{2} g}(\lambda) \right)^{-1} \right) ~ \sim ~ \sum_{j=0}^{\infty} \widetilde{r}_{-1-j} ( y, \xi, \lambda, \mu ; c^{2} g),  \\
& & \sigma \left( \left( \mu - Q^{q}_{\Abs, c^{2} g}(\lambda) \right)^{-1} \right)
~ = ~ \sigma \left( \left( \mu - \frac{1}{c} Q^{q}_{\Abs, g}(c^{2} \lambda) \right)^{-1} \right)
~ = ~ \sigma \left( c \left( c \mu - Q^{q}_{\Abs, g}(c^{2} \lambda) \right)^{-1} \right)   \nonumber \\
& \sim & c \sum_{j=0}^{\infty} \widetilde{r}_{-1-j} ( y, \xi, c^{2} \lambda, c \mu ; g)
~ = ~ c \sum_{j=0}^{\infty} \widetilde{r}_{-1-j} \left( y, c \frac{1}{c} \xi, c^{2} \lambda, c \mu ; g\right)
~ = ~  c \sum_{j=0}^{\infty} c^{-1-j} \widetilde{r}_{-1-j} \left( y, \frac{1}{c} \xi, \lambda, \mu ; g\right)  \nonumber \\
& = & \sum_{j=0}^{\infty} c^{-j} \widetilde{r}_{-1-j} \left( y, \frac{1}{c} \xi, \lambda, \mu ; g\right),    \nonumber
\end{eqnarray}

\noindent
which shows that

\begin{eqnarray}   \label{E:2.26}
\widetilde{r}_{-1-j} ( y, \xi, \lambda, \mu ; c^{2} g) & = &
c^{-j} \widetilde{r}_{-1-j} \left( y, \frac{1}{c} \xi, \lambda, \mu ; g\right).
\end{eqnarray}

\noindent
By (\ref{E:2.10}) the density $\pi_{j}(y ; c^{2} g)$ for $\pi_{j} (c^{2} g)$ is given by

\begin{eqnarray}   \label{E:2.27}
& & \pi_{j}(y ; c^{2} g) ~ = ~ - \frac{\partial}{\partial s}\bigg|_{s=0}  \left( \frac{1}{(2 \pi)^{m-1}} \int_{T_{y}^{\ast}{Y}}
\frac{1}{2 \pi i} \int_{\gamma} \mu^{-s} \Tr \widetilde{r}_{-1-j} \left( y, \xi, \frac{\lambda}{|\lambda|}, \mu ; c^{2} g \right) ~ d\mu ~ d \xi(c^{2} g) \right) , \nonumber \\
& &  \pi_{j}(c^{2} g) ~ = ~ \int_{Y} \pi_{j}(y ; c^{2} g) ~ d \vol (Y; c^{2} g).
\end{eqnarray}

\noindent
Using (\ref{E:2.27}) with the following relation

\begin{eqnarray}   \label{E:2.28}
d \xi(c^{2} g) & = & c^{-(m-1)} d \xi(g), \qquad  d \vol(Y; c^{2} g) ~ = ~ c^{m-1} d \vol(Y; g),
\end{eqnarray}

\noindent
we have the following result.

\begin{lemma}   \label{Lemma:2.7}
\begin{eqnarray*}
\pi_{k}(y ; c^{2} g) & = &  c^{-k} \pi_{k}(y ; g),  \qquad
\pi_{k}(c^{2} g) ~ = ~ c^{m-1-k} \pi_{k}(g).
\end{eqnarray*}
\end{lemma}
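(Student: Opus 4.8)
The plan is to substitute the scaling relation (\ref{E:2.26}) for the resolvent-symbol densities, together with the measure rescalings (\ref{E:2.28}), directly into the defining integral (\ref{E:2.27}) for $\pi_k(y;c^2 g)$, and then simply to keep track of the resulting powers of $c$.

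First I would insert (\ref{E:2.26}) (with $j=k$) into formula (\ref{E:2.27}). This pulls out an overall factor $c^{-k}$ and replaces the integrand $\widetilde r_{-1-k}(y,\xi,\tfrac{\lambda}{|\lambda|},\mu;c^2 g)$ by $\widetilde r_{-1-k}(y,\tfrac1c\xi,\tfrac{\lambda}{|\lambda|},\mu;g)$; the contour $\gamma$ is unaffected, since it only needs to enclose the poles of the integrand and $c^{-k}$ is a positive constant. Next I would perform the linear change of variables $\eta=\tfrac1c\xi$ in the cotangent fibre $T_y^{\ast}Y$. Being a dilation of $\mathbb R^{m-1}$, this transforms the fibre density by $d\xi(g)=c^{m-1}\,d\eta(g)$; combining with the first relation of (\ref{E:2.28}), namely $d\xi(c^2 g)=c^{-(m-1)}\,d\xi(g)$, the two extra powers of $c$ cancel and the fibre integral becomes exactly the one that defines $\pi_k(y;g)$. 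This gives the pointwise identity $\pi_k(y;c^2 g)=c^{-k}\pi_k(y;g)$. For the global coefficient I would then integrate this identity over $Y$ against the rescaled volume form and use the second relation of (\ref{E:2.28}), $d\vol(Y;c^2 g)=c^{m-1}\,d\vol(Y;g)$, which yields $\pi_k(c^2 g)=\int_Y c^{-k}\pi_k(y;g)\,c^{m-1}\,d\vol(Y;g)=c^{m-1-k}\pi_k(g)$.

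The only step that requires any care is the change of variables in the fibre: one must check that the intrinsic rescaling of $d\xi$ coming from measuring with respect to $c^2 g$ rather than $g$ combines correctly with the Jacobian of $\xi\mapsto c\eta$ — and this is precisely what the identities (\ref{E:2.28}) encode. Everything else is pure bookkeeping of powers of $c$: the homogeneity weights (weight $2$ in $\lambda$, weight $1$ in $\xi$ and $\mu$) are already built into (\ref{E:2.26}) via the derivation in (\ref{E:2.25}), so no further work with the parameter-dependent symbol calculus is needed.
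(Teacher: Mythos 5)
Your proposal is correct and is exactly the argument the paper intends: the paper's own (one-line) justification is precisely to insert the scaling relation (\ref{E:2.26}) into the density formula (\ref{E:2.27}) and cancel the Jacobian of $\xi\mapsto\frac{1}{c}\xi$ against the fibre-measure rescaling $d\xi(c^{2}g)=c^{-(m-1)}d\xi(g)$ from (\ref{E:2.28}), then integrate against $d\vol(Y;c^{2}g)=c^{m-1}d\vol(Y;g)$. Your bookkeeping of the powers of $c$ is accurate, so nothing further is needed.
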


\vspace{0.2 cm}
\noindent
Lemma \ref{Lemma:2.3} shows that the coefficient $a_{k}(c^{2} g)$ of the polynomial $P^{q}_{c^{2} g}(\lambda)$ is given by

\begin{eqnarray}   \label{E:2.29}
a_{k}(c^{2} g) & = &  - \pi_{m-1-2k}(c^{2} g) - ({\frak a}_{m-2k}(c^{2} g) - {\frak b}_{m-2k}(c^{2} g)) ~ \frac{d}{ds} \left( \frac{\Gamma(s-k)}{\Gamma(s)} \right)\bigg|_{s=0}    \\
& = & c^{2k} a_{k}(g),    \nonumber
\end{eqnarray}

\noindent
where we used the fact that ${\frak a}_{k}(c^{2} g) = c^{m-k} {\frak a}_{k}(g)$ and ${\frak b}_{k}(c^{2} g) = c^{m-k} {\frak b}_{k}(g)$ (Theorem 3.1.9 in \cite{Gi3} or (4.2.5) in \cite{Ki}).
Hence, $P^{q}_{c^{2} g}(\lambda) ~ - ~  P^{q}_{g}(c^{2} \lambda) = 0$ in Lemma \ref{Lemma:2.6}.
Replacing $\lambda$ with $\frac{1}{c^{2}} \lambda$,
we obtain the following result.

\begin{theorem}  \label{Theorem:2.8}
For $\lambda > 0$, we obtain the following equality:
\begin{eqnarray*}
\zeta_{Q^{q}_{\Abs, g}(\lambda)}(0) & = & 2 \left\{ \zeta_{\left( \Delta^{q}_{M, \Abs}(g) + \lambda \right)}(0) - \zeta_{\left( \Delta^{q}_{M, \Dir}(g) + \lambda \right)}(0) \right\} .
\end{eqnarray*}
If $\Dim \Ker \Delta^{q}_{M, \Abs}(g) = \Dim \Ker Q^{q}_{\Abs, g}(0) = \ell_{q}$, we obtain the following equality by taking $\lambda \rightarrow 0$:
\begin{eqnarray*}
\zeta_{Q^{q}_{\Abs, g}(0)}(0) + \ell_{q}  & = & 2 \left\{ \left( \zeta_{\Delta^{q}_{M, \Abs}(g)}(0) + \ell_{q} \right) - \zeta_{\Delta^{q}_{M, \Dir}(g)}(0) \right\}.
\end{eqnarray*}
\end{theorem}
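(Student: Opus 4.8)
The plan is to feed the scaling identity for the polynomial coefficients into Lemma \ref{Lemma:2.6} and then to let $\lambda\to0$. The first step is to note that (\ref{E:2.29}) says precisely $a_j(c^2 g)=c^{2j}a_j(g)$ for every $j$, so that the right-hand side $\sum_{j=0}^{[(m-1)/2]}\bigl(a_j(c^2 g)-a_j(g)c^{2j}\bigr)\lambda^j$ of Lemma \ref{Lemma:2.6} vanishes identically. Hence, for all $c>0$ and all $\lambda>0$,
\[
-\ln c\cdot\Bigl\{2\bigl(\zeta_{(\Delta^{q}_{M,\Abs}(g)+c^2\lambda)}(0)-\zeta_{(\Delta^{q}_{M,\Dir}(g)+c^2\lambda)}(0)\bigr)-\zeta_{Q^{q}_{\Abs,g}(c^2\lambda)}(0)\Bigr\}=0 .
\]
Taking any $c\neq1$, so that $\ln c\neq0$, forces the bracketed quantity to vanish; and since for an arbitrary $\mu>0$ one may take, e.g., $\lambda=2\mu$ and $c=1/\sqrt{2}$, so that $c^2\lambda=\mu$ and $c\neq1$, we conclude $\zeta_{Q^{q}_{\Abs,g}(\mu)}(0)=2\bigl(\zeta_{(\Delta^{q}_{M,\Abs}(g)+\mu)}(0)-\zeta_{(\Delta^{q}_{M,\Dir}(g)+\mu)}(0)\bigr)$ for every $\mu>0$, which after renaming $\mu$ as $\lambda$ is the first asserted identity.

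For the second identity I would let $\lambda\to0^{+}$ in the first one. The key input is the elementary fact that, for a non-negative self-adjoint elliptic operator $A$ with discrete spectrum, $\lim_{\lambda\to0^{+}}\zeta_{A+\lambda}(0)=\zeta_A(0)+\Dim\Ker A$; this is the value-at-$0$ counterpart of the expansions (\ref{E:2.13}) that the paper already uses for $\zeta'(0)$, and it follows by observing that for $\lambda>0$ the number $\zeta_{A+\lambda}(0)$ is the constant term of the small-time heat trace of $A+\lambda$, which depends continuously (indeed polynomially, in the differential case) on $\lambda$ and at $\lambda=0$ equals the constant heat coefficient of $A$, i.e. $\zeta_A(0)+\Dim\Ker A$. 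Applying this to $A=\Delta^{q}_{M,\Abs}(g)$, where $\Dim\Ker\Delta^{q}_{M,\Abs}(g)=\ell_q$, and to $A=\Delta^{q}_{M,\Dir}(g)$, where $\Ker\Delta^{q}_{M,\Dir}(g)=\{0\}$ (as already used in (\ref{E:2.11})), gives the limits of the two $\Delta$-terms; for the $Q$-term one uses instead that the heat coefficients of the family $Q^{q}_{\Abs,g}(\lambda)$ depend continuously on $\lambda$, so that $\lim_{\lambda\to0^{+}}\zeta_{Q^{q}_{\Abs,g}(\lambda)}(0)$ equals the constant heat coefficient of $Q^{q}_{\Abs,g}(0)$, namely $\zeta_{Q^{q}_{\Abs,g}(0)}(0)+\ell_q$ in view of $\Dim\Ker Q^{q}_{\Abs,g}(0)=\ell_q$ from (\ref{E:1.9}). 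The limit of the first identity as $\lambda\to0^{+}$ then becomes
\[
\zeta_{Q^{q}_{\Abs,g}(0)}(0)+\ell_q=2\Bigl\{\bigl(\zeta_{\Delta^{q}_{M,\Abs}(g)}(0)+\ell_q\bigr)-\zeta_{\Delta^{q}_{M,\Dir}(g)}(0)\Bigr\},
\]
which is the claim.

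I expect the only delicate point to be this $\lambda\to0$ passage, specifically the interchange of the limit $\lambda\to0$ with the meromorphic continuation to $s=0$ for the family $Q^{q}_{\Abs,g}(\lambda)$; it can be controlled using the locally computable resolvent and heat data recorded in (\ref{E:2.8})--(\ref{E:2.10}), or equivalently by writing out the spectral zeta sum of $Q^{q}_{\Abs,g}(\lambda)$ and splitting off the $\ell_q$ eigenvalues that tend to $0$ as in (\ref{E:2.12})--(\ref{E:2.13}). (Note that the first identity already guarantees that $\lim_{\lambda\to0^{+}}\zeta_{Q^{q}_{\Abs,g}(\lambda)}(0)$ exists, so only the identification of this limit with $\zeta_{Q^{q}_{\Abs,g}(0)}(0)+\ell_q$ really needs argument.) Everything else is a formal rearrangement of results already established in this section.
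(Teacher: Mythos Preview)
Your argument is correct and follows essentially the same route as the paper: use (\ref{E:2.29}) to kill the right-hand side of Lemma \ref{Lemma:2.6}, divide by $\ln c$, and then substitute to remove the $c^2$ (the paper phrases this last step as ``replacing $\lambda$ with $\frac{1}{c^{2}}\lambda$''). Your discussion of the $\lambda\to 0^{+}$ passage is in fact more careful than the paper's, which simply asserts the second identity ``by taking $\lambda\to 0$'' without further comment.
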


\vspace{0.2 cm}

The following heat trace asymptotic expansion is well known \cite{GS,Li,PS}.

\begin{eqnarray}    \label{E:2.30}
\Tr e^{-t Q^{q}_{\Abs}(0)} \sim \sum_{j=0}^{\infty} v_{j} t^{-(m-1)-j} + \sum_{j=1}^{\infty} (w_{j} \ln t + z_{j}) t^{j},
\end{eqnarray}

\noindent
where the $v_{j}$'s and $w_{j}$'s are locally computed and the $z_{j}$'s are not. The second statement of Theorem \ref{Theorem:2.8} can be rewritten as follows.

\begin{corollary}   \label{Corollary:2.9}
\begin{eqnarray*}
v_{m-1} & = & 2 \left( {\frak a}_{m} - {\frak b}_{m} \right).
\end{eqnarray*}
\end{corollary}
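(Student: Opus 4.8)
The plan is to extract Corollary \ref{Corollary:2.9} from the second identity of Theorem \ref{Theorem:2.8} by matching the constant terms (i.e. the $t^0$ coefficients) in the relevant heat trace expansions. Recall that for an operator $\mathcal D$ with discrete spectrum, $\zeta_{\mathcal D}(0) = (\text{constant term in } \Tr e^{-t\mathcal D}) - \Dim\Ker\mathcal D$, since the $\frac1{\Gamma(s)}$ prefactor in \eqref{E:1.10} kills all the nonconstant terms of the small-$t$ expansion and the subtraction of $\Dim\Ker\mathcal D$ inside the integral shifts the constant term accordingly. So my first step is to write down the constant term in each of the three heat traces appearing in Theorem \ref{Theorem:2.8}.

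First I would treat the right-hand side. From \eqref{E:2.6} the constant term of $\Tr e^{-t\Delta^q_{M,\Abs}}$ is the coefficient of $t^0 = t^{-\frac{m-j}{2}}$ with $j=m$, namely ${\frak a}_m$, so $\zeta_{\Delta^q_{M,\Abs}(g)}(0) = {\frak a}_m - \ell_q$, and hence $\zeta_{\Delta^q_{M,\Abs}(g)}(0) + \ell_q = {\frak a}_m$. Likewise $\zeta_{\Delta^q_{M,\Dir}(g)}(0) = {\frak b}_m$ (here $\Ker\Delta^q_{M,\Dir}=\{0\}$, so there is no correction). Therefore the right-hand side of the second identity in Theorem \ref{Theorem:2.8} equals $2({\frak a}_m - {\frak b}_m)$.

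Next I would treat the left-hand side using \eqref{E:2.30}. Because $Q^q_{\Abs}(0)$ has order $1$ on an $(m-1)$-dimensional manifold, the small-$t$ expansion is \eqref{E:2.30}: the leading powers are $t^{-(m-1)-j}$ for $j\ge 0$ and then the log-terms and integer powers $t^j$ for $j\ge 1$. The constant term $t^0$ is picked up exactly when $-(m-1)-j = 0$, i.e. $j = m-1$, giving $v_{m-1}$ (note that $m-1\geq 0$, so this index occurs; and the $t^j\ln t$, $t^j$ families only start at $j=1$ so they contribute nothing at $t^0$). Hence $\zeta_{Q^q_{\Abs,g}(0)}(0) = v_{m-1} - \ell_q$, so $\zeta_{Q^q_{\Abs,g}(0)}(0) + \ell_q = v_{m-1}$. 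Plugging both sides into Theorem \ref{Theorem:2.8} gives $v_{m-1} = 2({\frak a}_m - {\frak b}_m)$, which is the claim.

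The only genuine subtlety — the part I would be most careful about — is the bookkeeping of the $\Dim\Ker$ corrections and checking that they cancel consistently: the identity in Theorem \ref{Theorem:2.8} is already stated with the $+\ell_q$ terms in place, and one must verify that $\zeta_{\mathcal D}(0) = c_0(\mathcal D) - \Dim\Ker\mathcal D$ with $c_0$ the constant term of the unmodified trace $\Tr e^{-t\mathcal D}$, so that $\zeta_{\mathcal D}(0) + \Dim\Ker\mathcal D = c_0(\mathcal D)$ is exactly the heat coefficient read off from \eqref{E:2.6} or \eqref{E:2.30}. Once that identification is made the corollary is immediate; there is no hard analysis, only the correct alignment of indices ($j=m$ in \eqref{E:2.6}, $j=m-1$ in \eqref{E:2.30}) and the observation that the non-locally-computable coefficients $z_j$ never reach $t^0$.
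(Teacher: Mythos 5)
Your proof is correct and is exactly the argument the paper intends: Corollary~\ref{Corollary:2.9} is the second identity of Theorem~\ref{Theorem:2.8} rewritten via the standard fact $\zeta_{\mathcal D}(0) = c_{0}(\mathcal D) - \dim\ker\mathcal D$, with $c_{0}$ the $t^{0}$-coefficient of $\Tr e^{-t\mathcal D}$, reading off ${\frak a}_{m}$, ${\frak b}_{m}$ from \eqref{E:2.6} and $v_{m-1}$ from \eqref{E:2.30}. One small remark: the exponent in \eqref{E:2.30} should be $t^{-(m-1)+j}$ (first-order operator on the $(m-1)$-dimensional $Y$), so solving $-(m-1)+j=0$ gives your stated index $j=m-1$; as literally written in your text, $-(m-1)-j=0$ would give $j=1-m$, so you have silently (and correctly) corrected the paper's sign typo.
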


\vspace{0.3 cm}

Let $\kappa_{1}(y), \cdots, \kappa_{m-1}(y)$ be the principal curvatures of $Y$ in $M$ at $y \in Y$. We define the $r$-mean curvature $H_{r}$ by

\begin{eqnarray}   \label{E:2.31}
H_{r}(y) & = & \frac{1}{\binom{m-1}{r}} \sigma_{r}(\kappa_{1}, \cdots, \kappa_{m-1})
~ = ~ \frac{r ! (m-1-r)!}{(m-1)!} \sigma_{r}(\kappa_{1}, \cdots, \kappa_{m-1}),
\end{eqnarray}

\noindent
where $\sigma_{r} : {\mathbb R}^{m-1} \rightarrow {\mathbb R}$ is the $r$-th elementary symmetric polynomial defined by
$\sigma_{r}(u_{1}, \cdots. u_{m-1}) = \sum_{1 \leq i_{1} < \cdots < i_{r} \leq m-1} u_{i_{1}} \cdots u_{i_{r}}$ \cite{ALM}.
For example, for $m \geq 3$,
$H_{1}(y)$ and $H_{2}(y)$ are

\begin{eqnarray}    \label{E:2.32}
H_{1}(y) & = & \frac{1}{m-1} \sum_{\alpha = 1}^{m-1} \kappa_{\alpha}(y),
\qquad H_{2}(y) ~ = ~ \frac{2}{(m-1)(m-2)} \sum_{1 \leq \alpha < \beta \leq m-1} \kappa_{\alpha}(y) \kappa_{\beta}(y).
\end{eqnarray}

\noindent
When $m = 3$, the following equality was proved in Lemma 3.2 of \cite{KL4}.

\begin{eqnarray}    \label{E:2.33}
\sum_{\alpha=1}^{2} R^{M}_{\alpha 3 \alpha 3}(y) & = & - \Ric^{M}_{33} ~ = ~ - \frac{1}{2} \tau_{M}(y) + \frac{1}{2} \tau_{Y}(y) - H_{2}(y),
\end{eqnarray}

\noindent
where $R^{M}_{\alpha 3 \alpha 3}(y)$ and $\Ric^{M}_{33}$ are defined in (\ref{E:3.100}) below.
For $m = 2, ~~ 3$, ${\frak a}_{m} - {\frak b}_{m}$ can be computed concretely by using  Theorem 3.4.1 and Theorem 3.6.1 in \cite{Gi3} or Section 4.2 and 4.5 in \cite{Ki}, which together with (\ref{E:2.33}) and (\ref{E:3.78}) below yields the following result.

\begin{corollary}   \label{Corollary:2.10}
Let $(M, Y ; g)$ be an $m$-dimensional compact oriented Riemannian manifold with boundary $Y$.
We define $Q^{q}_{\Abs}(0)$ on $\Omega^{q}(Y)$ as above and denote by
$\tau_{M}$ and $\tau_{Y}$ the scalar curvatures of $M$ and $Y$, respectively.
If $m = 2$, then

\begin{eqnarray*}
\zeta_{Q^{q}_{\Abs}(0)}(0) + \ell_{q}
& = &  \begin{cases}  0 & \text{if} \quad q = 0  \\
- \frac{1}{\pi} \int_{Y} \kappa(y) ~ dy & \text{if} \quad q = 1 .  \end{cases}
\end{eqnarray*}

\noindent
If $m = 3$, then

\begin{eqnarray*}
\zeta_{Q^{q}_{\Abs}(0)}(0) + \ell_{q}
& = &  \begin{cases}
\frac{1}{4 \pi} \int_{Y} \Big\{ \frac{1}{8} \tau_{M} + \frac{1}{24} \tau_{Y} +  \frac{1}{4} H_{1}^{2} \Big\} dy & \text{if} \quad q = 0  \\
\frac{1}{4 \pi} \int_{Y} \Big\{ - \frac{1}{4} \tau_{M} - \frac{5}{12} \tau_{Y} +  \frac{1}{2} H_{1}^{2}  \Big\} dy  & \text{if} \quad q = 1 \\
\frac{1}{4 \pi} \int_{Y} \Big\{ - \frac{3}{8} \tau_{M} + \frac{13}{24} \tau_{Y} +  \frac{1}{4} H_{1}^{2}  \Big\} dy  & \text{if} \quad q = 2 . \end{cases}
\end{eqnarray*}
\noindent

\end{corollary}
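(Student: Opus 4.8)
The plan is to combine Theorem \ref{Theorem:2.8} with explicit formulas for the heat coefficients ${\frak a}_m$ and ${\frak b}_m$ of the absolute and Dirichlet Laplacians on $q$-forms. By the second statement of Theorem \ref{Theorem:2.8}, we have $\zeta_{Q^{q}_{\Abs}(0)}(0) + \ell_{q} = 2({\frak a}_m - {\frak b}_m)$ once we observe that $\zeta_{\Delta^{q}_{M, \Abs}(g)}(0) + \ell_q$ equals the constant term ${\frak a}_m$ in the small-$t$ expansion \eqref{E:2.6}, and likewise $\zeta_{\Delta^{q}_{M, \Dir}(g)}(0) = {\frak b}_m$ (the standard fact relating the value of the zeta function at zero to the heat coefficient indexed by the dimension, with the kernel dimension subtracted off). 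Thus the whole problem reduces to computing ${\frak a}_m - {\frak b}_m$ for $m = 2$ and $m = 3$.

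For $m = 2$, ${\frak a}_2$ and ${\frak b}_2$ are read off from Theorem 3.4.1 in \cite{Gi3} (or Section 4.2 of \cite{Ki}): for the scalar Laplacian ($q=0$) with Neumann/Dirichlet conditions the interior contributions involving $\tau_M$ cancel in the difference and the boundary contributions from the geodesic curvature $\kappa$ also cancel between Neumann and Dirichlet, giving ${\frak a}_2 - {\frak b}_2 = 0$; for $q = 1$ one uses that $\Omega^1(M)$ on a surface is, up to the Hodge star, again governed by the scalar Laplacian but the boundary conditions pair up differently (absolute on $1$-forms corresponds to a mix of Dirichlet and Neumann on components), producing the nonzero term $-\frac{1}{2\pi}\int_Y \kappa\, dy$, so that $2({\frak a}_2 - {\frak b}_2) = -\frac{1}{\pi}\int_Y \kappa\, dy$. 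For $m = 3$ the relevant coefficient is ${\frak a}_3$ (resp. ${\frak b}_3$), which by Theorem 3.6.1 of \cite{Gi3} is a boundary integral of a universal linear combination of $\tau_M$, $\tau_Y$, $H_1^2$, $H_2$ and $\Ric^M_{33}$ (there is no interior contribution to the coefficient indexed by the odd dimension). One inserts the known endomorphism-valued curvature terms appearing in the Lichnerowicz/Weitzenböck formula for $\Delta^q_M$ acting on $q$-forms (for $q = 0, 1, 2$ on a $3$-manifold) together with the boundary second fundamental form data encoded in the absolute versus Dirichlet conditions, and then uses \eqref{E:2.33} to eliminate $\sum_\alpha R^M_{\alpha 3\alpha 3} = -\Ric^M_{33}$ in favor of $\tau_M$, $\tau_Y$ and $H_2$. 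Finally one uses \eqref{E:3.78} (referenced but below) to handle whatever $H_2$ terms remain; collecting everything yields the three displayed formulas.

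The main obstacle will be the bookkeeping in the $m=3$ case: one must correctly assemble the boundary heat coefficient ${\frak a}_3$ for $\Delta^q_{M,\Abs}$ from Gilkey's universal formula, which requires knowing the precise form of the endomorphism $E$ (the Weitzenböck potential) restricted to $Y$ and the way the absolute boundary condition $\omega_{\Nor} = (d\omega)_{\Nor} = 0$ decomposes the bundle $\Lambda^q T^*M|_Y$ into Dirichlet and Robin (Neumann-type, with a shift by the second fundamental form) pieces — the Robin shift contributes extra $H_1$- and $H_2$-dependent terms that do not cancel against the Dirichlet operator. Tracking the numerical coefficients through the decomposition $\Lambda^q T^*M|_Y \cong \Lambda^q T^*Y \oplus \Lambda^{q-1}T^*Y$, applying Gilkey's $a_3$ formula on each summand with the appropriate boundary operator, and then using \eqref{E:2.33} and \eqref{E:3.78} to reduce to the basis $\{\tau_M, \tau_Y, H_1^2\}$ is where all the arithmetic lives; the $m=2$ case is comparatively short. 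I would organize the computation by first recording ${\frak a}_m$ and ${\frak b}_m$ as boundary integrals with undetermined universal constants, then specializing the constants from \cite{Gi3,Ki}, and only at the end simplifying via \eqref{E:2.33}.
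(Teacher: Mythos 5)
Your proposal coincides with the paper's own derivation: Corollary \ref{Corollary:2.10} is obtained exactly by combining the second identity of Theorem \ref{Theorem:2.8} with the standard facts $\zeta_{\Delta^{q}_{M,\Abs}}(0)+\ell_{q}={\frak a}_{m}$ and $\zeta_{\Delta^{q}_{M,\Dir}}(0)={\frak b}_{m}$, and then evaluating ${\frak a}_{m}-{\frak b}_{m}$ from Gilkey's boundary heat-coefficient formulas (Theorems 3.4.1 and 3.6.1 of \cite{Gi3}), simplified via (\ref{E:2.33}) and (\ref{E:3.78}). The bookkeeping you describe --- splitting the absolute condition into Dirichlet and Robin pieces on $\Lambda^{q}T^{\ast}M|_{Y}\cong\Lambda^{q}T^{\ast}Y\oplus\Lambda^{q-1}T^{\ast}Y$ so that the interior and Dirichlet contributions cancel and only the Robin shift survives --- is precisely what the paper (implicitly) does, so your plan is correct and essentially identical.
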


\noindent
{\it Remark} : When $q = 0$, the above result is obtained in Theorem 1.5 of \cite{PS} or Theorem 5.1 of \cite{Li}.

\vspace{0.2 cm}
\noindent
{\it Example 2.11} : For a closed Riemannian manifold $N$, we consider a Riemannian product $M = [0, ~ a] \times N$. Let $\Delta^{q}_{M, \Abs}$ and $\Delta^{q}_{M, \Dir}$ be the Laplacian $- \frac{\partial^{2}}{\partial u^{2}} + \left(\begin{array}{clcr} \Delta^{q}_{N} \\ \Delta^{q-1}_{N} \end{array} \right)$
on $M$ acting on smooth $q$-forms with the absolute and Dirichlet boundary conditions on $Y := \{ 0, a \} \times N$, respectively. Let ${\mathbb N} = \{ 1, 2, 3, \cdots \}$ be the set of all positive integers and
 ${\mathbb N}_{0} = {\mathbb N} \cup \{ 0 \}$.
The spectra of $\Delta^{q}_{M, \Abs}$ and $\Delta^{q}_{M, \Dir}$ are given by

\begin{eqnarray*}
\Spec\left( \Delta^{q}_{M, \Abs} \right) & = & \left\{ \lambda_{n} + \left( \frac{k \pi}{a} \right)^{2},
~~ \mu_{s} + \left( \frac{l \pi}{a} \right)^{2} ~ \bigg| ~ \lambda_{n} \in \Spec(\Delta^{q}_{N}), ~~
\mu_{s} \in \Spec(\Delta^{q-1}_{N}), ~~ k \in {\mathbb N}_{0}, ~~ l \in {\mathbb N} \right\},  \\
\Spec\left( \Delta^{q}_{M, \Dir} \right) & = & \left\{ \lambda_{n} + \left( \frac{k \pi}{a} \right)^{2},
~~ \mu_{s} + \left( \frac{l \pi}{a} \right)^{2} ~ \bigg| ~ \lambda_{n} \in \Spec(\Delta^{q}_{N}), ~~
\mu_{s} \in \Spec(\Delta^{q-1}_{N}), ~~ k \in {\mathbb N}, ~~ l \in {\mathbb N} \right\},
\end{eqnarray*}

\noindent
which shows that $~ \zeta_{\Delta^{q}_{M, \Abs}}(s) - \zeta_{\Delta^{q}_{M, \Dir}}(s) = \zeta_{\Delta_{N}^{q}}(s) ~$ and hence

\begin{eqnarray*}
\ln \Det^{\ast} \Delta^{q}_{M, \Abs} ~ - ~ \ln \Det \Delta^{q}_{M, \Dir} & = & \ln \Det^{\ast} \Delta^{q}_{N}, \qquad
\zeta_{\Delta^{q}_{M, \Abs}}(0) ~ - ~ \zeta_{\Delta^{q}_{M, \Dir}}(0) ~ = ~ \zeta_{\Delta^{q}_{N}}(0).
\end{eqnarray*}

\noindent
Simple computation shows that the spectrum of $Q^{q}_{\Abs}(0) : \Omega^{q}(N \times \{ 0 \}) \oplus \Omega^{q}(N \times \{ a \}) \rightarrow
\Omega^{q}(N \times \{ 0 \}) \oplus \Omega^{q}(N \times \{ a \})$ is given by

\begin{eqnarray*}
& & \Spec\left( Q^{q}_{\Abs}(0) \right)   \\
& = & \left\{ 0 \right\} ~ \cup ~ \left\{ \frac{2}{a} \right\} ~ \cup ~
\left\{ \sqrt{\lambda_{n}} \left( 1 + \frac{2}{e^{a \sqrt{\lambda_{n}}} - 1} \right), ~ \sqrt{\lambda_{n}} \left( 1 - \frac{2}{e^{a \sqrt{\lambda_{n}}} + 1} \right)
\mid 0 < \lambda_{n} \in \Spec(\Delta^{q}_{N}) \right\},  \nonumber
\end{eqnarray*}

\noindent
where the multiplicities of $0$ and $\frac{2}{a}$ are $\ell_{q} := \Dim \Ker H^{q}(N) := \Dim \Ker H^{q}(M)$. Hence,

\begin{eqnarray*}
\ln \Det^{\ast} Q^{q}_{\Abs}(0) & = & \ell_{q} \ln \frac{2}{a} + \ln \Det^{\ast} \Delta^{q}_{N} +
\sum_{0 < \lambda_{n} \in \Spec(\Delta^{q}_{N})} \left\{ \ln \left( 1 + \frac{2}{e^{a \sqrt{\lambda_{n}}} - 1} \right) +
\ln \left( 1 - \frac{2}{e^{a \sqrt{\lambda_{n}}} + 1} \right) \right\}    \\
& = & \ell_{q} \ln \frac{2}{a} + \ln \Det^{\ast} \Delta^{q}_{N},  \\
\zeta_{Q^{q}_{\Abs}(0)}(0) & = & \ell_{q} + \zeta_{\Delta^{q}_{N}}(0) + \zeta_{\Delta^{q}_{N}}(0) ~ = ~ \ell_{q} + 2 \zeta_{\Delta^{q}_{N}}(0).
\end{eqnarray*}

\noindent
Let $\{ \psi_{1}, \cdots, \psi_{\ell_{q}} \}$ be an orthonormal basis of $\Ker \Delta^{q}_{N}$. Then
$\{ \frac{1}{\sqrt{a}} \psi_{1}, \cdots, \frac{1}{\sqrt{a}} \psi_{\ell_{q}} \}$ is an orthonormal basis of $\Ker \Delta^{q}_{M, \Abs}$. Hence,

\begin{eqnarray*}
\langle  \frac{1}{\sqrt{a}} \psi_{i}, ~ \frac{1}{\sqrt{a}} \psi_{j} \rangle_{Y} & = &
\frac{1}{a} \langle  \psi_{i}, ~ \psi_{j} \rangle_{\{ 0 \} \times N} ~ + ~ \frac{1}{a} \langle  \psi_{i}, ~ \psi_{j} \rangle_{\{ a \} \times N} ~ = ~ \frac{2}{a} \delta_{ij}.
\end{eqnarray*}

\noindent
Since $\ln \ddet {\mathcal S} = \ell_{q} \ln \frac{2}{a}$ and $a_{0} = 0$, this result agrees with Theorem \ref{Theorem:2.4} and Theorem \ref{Theorem:2.8}.

\vspace{0.3 cm}

\section{The homogeneous symbol of $Q_{\Abs}^{q}(\lambda)$}

In this section we are going to compute the homogeneous symbol of $Q_{\Abs}^{q}(\lambda)$ in the boundary normal coordinate system defined below.
For $y_{0} \in Y$ and a small open neighborhood $V$ of $y_{0}$ in $Y$, we choose a normal coordinate system on $V$ with $y = (y_{1}, \cdots, y_{m-1})$ and $y_{0} = (0, \cdots, 0)$.
For $y \in Y$, we denote by $\gamma_{y}(u)$ the unit speed geodesic such that $\gamma_{y}^{\prime}(0)$ is an inward normal vector to $Y$.
Then, $(y, u) = (y_{1}, \cdots, y_{m-1}, u)$ gives a local coordinate system.
We will write $u = y_{m}$ for notational convenience.
For $1 \leq \alpha, ~ \beta, ~\gamma \leq m-1$, the metric satisfies

\begin{eqnarray}  \label{E:3.1}
g_{\alpha\beta}(y_{0}) = \delta_{\alpha\beta}, \qquad g_{\alpha\beta; \gamma}(y_{0}) = 0, \qquad g_{\alpha m}(y) = 0, \qquad g_{mm}(y) = 1 ,
\end{eqnarray}

\noindent
where $g_{\alpha\beta;k} := \frac{\partial}{\partial y_{k}} g_{\alpha\beta}$,  $1 \leq k \leq m$.
Moreover, we may choose the coordinate system such that

\begin{eqnarray}   \label{E:3.2}
g^{\alpha \beta; m}(y_{0}) & = & - ~ g_{\alpha \beta; m}(y_{0}) ~ = ~ \begin{cases} 2 \kappa_{\alpha} & \quad \text{for} \quad \alpha = \beta \\
0 & \quad \text{for} \quad \alpha \neq \beta , \end{cases}
\end{eqnarray}

\noindent
where the $\kappa_{\alpha}$'s ($1 \leq \alpha \leq m-1$) are the principal curvatures of $Y$ in $M$.
For simplicity, we are going to write$\frac{\partial}{\partial y_{k}}$ by $\partial_{y_{k}}$ for $1 \leq k \leq m$.
We denote by $\nabla^{M}$ the Levi-Civita connection on $M$ associated to $g$ and
denote by $\omega$ the connection form for $\nabla^{M}$ with respect to $\{ \partial_{y_{1}}, \cdots, \partial_{y_{m}} \}$ and put $\omega_{k} = \omega(\partial_{y_{k}})$.
For some endomorphism $E_{q}$ acting on $~\wedge^{q} T^{\ast}M ~$,
$~\Delta^{q}_{M} + \lambda$ is expressed as follows \cite{KL4,PS}:

\begin{eqnarray}    \label{E:3.3}
& & \Delta^{q}_{M} + \lambda  ~ = ~ - \Tr \left( \left( \nabla^{M} \right)^{2} \right) - E_{q}  \\
& = & - \partial_{y_{m}}^{2} \Id ~ + ~ \left( A(y, y_{m}) - ~ 2 \omega_{m} \right)  \partial_{y_{m}} ~ + ~  D \big( y, y_{m},\frac{\partial}{\partial y}, \lambda \big)
 - \left( \partial_{y_{m}} \omega_{m} + \omega_{m} \omega_{m} - A(x, y_{m}) \omega_{m} \right) ,   \nonumber
\end{eqnarray}

\noindent
where $\Id$ is an $\binom{m}{q} \times \binom{m}{q}$ identity matrix and

\begin{eqnarray}   \label{E:3.4}
& & A(y, y_{m}) ~ = ~ \left\{ - \frac{1}{2} \sum_{\alpha, \beta = 1}^{m-1} g^{\alpha\beta}(y, y_{m}) ~ g_{\alpha\beta;m}(y, y_{m}) \right\} \Id,  \\
& & D\left(y, y_{m}, \frac{\partial}{\partial y}, \lambda \right) ~ = ~
\left\{ \left( - \sum_{\alpha, \beta = 1}^{m-1} g^{\alpha\beta}(y, y_{m}) \partial_{y_{\alpha}} \partial_{y_{\beta}} + \lambda \right)\right. \label{E:3.5}   \\
& &\left. \hspace{1.0 cm} -\sum_{\alpha, \beta = 1}^{m-1} \left( \frac{1}{2} g^{\alpha\beta}(y, y_{m}) (\partial_{y_{\alpha}} \ln |g|(y, y_{m}) ) +
g^{\alpha\beta; \alpha}(y, y_{m})) \right) \partial_{y_{\beta}} \right\} \Id   \nonumber  \\
& & \hspace{1.0 cm} - 2 \sum_{\alpha, \beta = 1}^{m-1} g^{\alpha \beta}(y, y_{m}) \omega_{\alpha} \partial_{y_{\beta}}
- \sum_{\alpha,\beta = 1}^{m-1} g^{\alpha\beta} (y, y_{m}) \left( \partial_{y_{\alpha}} \omega_{\beta} + \omega_{\alpha} \omega_{\beta} - \sum_{\gamma =1}^{m-1}\, \Gamma_{\alpha\beta}^{\gamma} \omega_{\gamma} \right) - E_{q} ,     \nonumber
\end{eqnarray}

\noindent
We use the Weitzenb\"ock formula (for example, Lemma 4.1.2 in \cite{Gi1}) to describe $E_{q}$ explicitly.
It is known (\cite{Gi3}) that $E_{0} = 0$.
Let $\{e_{1}, \cdots, e_{m} \}$ and $\{e^{1}, \cdots, e^{m} \}$ be local orthonormal bases of $TM|_{U}$ and $T^{\ast}M|_{U}$
for some open set $U$ in $M$, respectively.
We denote by $R^{M}_{ijkl}$ and $\Ric^{M}_{ij}$ the Riemann curvature tensor and Ricci tensor on $M$ defined by

\begin{eqnarray} \label{E:3.100}
R^{M}_{ijkl} = \big\langle \nabla^{M}_{e_{i}} \nabla^{M}_{e_{j}} e_{k} - \nabla^{M}_{e_{j}} \nabla^{M}_{e_{i}} e_{k} - \nabla^{M}_{[e_{i}, e_{j}]} e_{k}, e_{l} \big\rangle, \qquad
\Ric^{M}_{ij} = \sum_{k=1}^{m} R^{M}_{ikkj}.
\end{eqnarray}

\noindent
Then,

\begin{eqnarray}    \label{E:3.101}
E_{1} = \bigg( - \Ric^{M}_{ij} \bigg)_{1 \leq i, j \leq m}.
\end{eqnarray}

\noindent
For later use, we compute $E_{2}$ for $m = 3$ with respect to a local orthonormal basis
$\{ e^{1} \wedge e^{2}, e^{3} \wedge e^{1}, e^{3} \wedge e^{2} \}$, which is given by

\begin{eqnarray}    \label{E:3.102}
E_{2} = \left( \begin{array}{clcr} - \Ric^{M}_{33} & - R^{M}_{2113} & R^{M}_{1223} \\ - R^{M}_{2113} & - \Ric^{M}_{22} & R^{M}_{1332} \\
R^{M}_{1223} & R^{M}_{1332} & - \Ric^{M}_{11} \end{array} \right).
\end{eqnarray}

\vspace{0.2 cm}

Since $Y$ is compact, we can choose a uniform constant $\epsilon_{0} > 0$ such that $\gamma_{x}(u)$ is well defined for $0 \leq u \leq \epsilon_{0}$.
Then,

\begin{eqnarray}    \label{E:3.6}
U_{\epsilon_{0}} & := & \{ (y, y_{m}) \mid y \in Y, ~~0 \leq y_{m} < \epsilon_{0} \}
\end{eqnarray}

\noindent
is a collar neighborhood of $Y$.
We note that for a fixed $y_{m}$ in $[0, \epsilon_{0})$,

\begin{eqnarray}     \label{E:3.7}
Y_{y_{m}} & := & \{ (y, y_{m}) \mid y \in Y \}
\end{eqnarray}

\noindent
is a submanifold of $M$ diffeomorphic to $Y$, and it is the $y_{m}$-level of $Y$.
For $0 < y_{m} < \epsilon$, we denote

\begin{eqnarray}     \label{E:3.8}
M_{y_{m}} & := & M - \cup_{0 \leq u < y_{m}} Y_{u} ,
\end{eqnarray}

\noindent
and denote by $i_{y_{m}} : Y_{y_{m}} \rightarrow M_{y_{m}}$ the natural inclusion.
We also denote by $\Delta^{q}_{M_{y_{m}}}$ the Hodge-De Rham Laplacian $\Delta^{q}_{M}$ restricted to $M_{y_{m}}$. For each $0 \leq y_{m} < \epsilon_{0}$, we define $Q^{q}_{\Abs, y_{m}}(\lambda) : \Omega^{q}(Y_{y_{m}}) \rightarrow \Omega^{q}(Y_{y_{m}})$ and
$Q^{q-1}_{\Rel, y_{m}}(\lambda) : \Omega^{q-1}(Y_{y_{m}}) \rightarrow \Omega^{q-1}(Y_{y_{m}})$ in the same way as $Q^{q}_{\Abs}(\lambda)$ and $Q^{q-1}_{\Rel}(\lambda)$.
Indeed, for $\alpha_{y_{m}}(y) \in \Omega^{q}(Y_{y_{m}})$ and $\beta_{y_{m}}(y) \in \Omega^{q-1}(Y_{y_{m}})$, we choose
$\phi_{y_{m}} \in \Omega^{q}(M_{y_{m}})$, $\psi_{y_{m}} \in \Omega^{q}(M_{y_{m}})$
satisfying

\begin{eqnarray}    \label{E:3.9}
& & (\Delta^{q}_{M_{y_{m}}} + \lambda) \phi_{y_{m}} = 0, \qquad i_{y_{m}}^{\ast} \phi_{y_{m}} = \alpha_{y_{m}},
\qquad i^{\ast} \iota_{\partial_{y_{m}}} \phi_{y_{m}} = 0,  \\
& & (\Delta^{q}_{M_{y_{m}}} + \lambda) \psi_{y_{m}} = 0, \qquad i_{y_{m}}^{\ast} \psi_{y_{m}} = 0,
\qquad i^{\ast} \iota_{\partial_{y_{m}}} \psi_{y_{m}} = \beta_{y_{m}}.   \nonumber
\end{eqnarray}

\noindent
We define

\begin{eqnarray}     \label{E:3.10}
Q^{q}_{\Abs, y_{m}}(\lambda) (\varphi_{y_{m}}) ~ := ~ - i_{y_{m}}^{\ast} \iota_{\partial_{y_{m}}} d \phi_{y_{m}}, \qquad
Q^{q}_{\Rel, y_{m}}(\lambda) (\varphi_{y_{m}}) ~ := ~ i_{y_{m}}^{\ast} \left( \delta \psi_{y_{m}} \right).
\end{eqnarray}

\noindent
Using local coordinates on $U_{\epsilon_{0}}$, with multi-indices $i=(i_1,...,i_q)$, $j=(j_1,...,j_{q-1})$,
$k=(k_1,...,k_q)$, and $l=(l_1,...,l_{q-1})$,
we write $\phi_{y_{m}}(y, y_{m})$ and $\psi_{y_{m}}(y,y_{m})$ as

\begin{eqnarray}   \label{E:3.11}
\phi_{y_{m}}(y, y_{m}) ~ = ~ \sum_{i} \phi_{1, i}(y, y_{m}) ~ dy_{i_{1}} \wedge \cdots \wedge dy_{i_{q}} +
\sum_{j} \phi_{2, j}(y, y_{m}) ~ dy_{m} \wedge dy_{j_{1}} \wedge \cdots \wedge dy_{j_{q-1}},  \\
\psi_{y_{m}}(y, y_{m}) ~ = ~ \sum_{k} \psi_{1, k}(y, y_{m}) ~ dy_{k_{1}} \wedge \cdots \wedge dy_{k_{q}} +
\sum_{l} \psi_{2, l}(y, y_{m}) ~ dy_{m} \wedge dy_{l_{1}} \wedge \cdots \wedge dy_{l_{q-1}},  \nonumber
\end{eqnarray}

\noindent
where

\begin{eqnarray*}
& & \alpha_{y_{m}}(y) ~ = ~ \sum_{i} \phi_{1, i}(y, y_{m})\big|_{Y_{y_{m}}} ~ dy_{i_{1}} \wedge \cdots \wedge dy_{i_{q}}  , \\
& & \beta_{y_{m}}(y) ~ = ~ \sum_{l} \psi_{2, l}(y, y_{m})\big|_{Y_{y_{m}}} ~ dy_{l_{1}} \wedge \cdots \wedge dy_{l_{q-1}} ,   \\
& & \phi_{2, j}\big|_{Y_{y_{m}}} ~ = ~ \psi_{1, k}\big|_{Y_{y_{m}}} ~ = ~ 0.
\end{eqnarray*}

\noindent
In this local coordinate system, $Q^{q}_{\Abs, y_{m}}(\lambda) (\varphi_{y_{m}})$ and
$Q^{q}_{y_{m}, \Rel}(\lambda) (\varphi_{y_{m}})$ can be rewritten as follows (cf. Definition \ref{Definition:1.1}).

\begin{eqnarray}   \label{E:3.12}
Q^{q}_{\Abs, y_{m}}(\lambda) (\alpha_{y_{m}}(y)) & = & - \sum_{i} \big( \partial_{y_{m}} \phi_{1, i}(y, y_{m})\big)\big|_{Y_{y_{m}}}  ~ dy_{i_{1}} \wedge \cdots \wedge dy_{i_{q}}, \\
Q^{q-1}_{\Rel, y_{m}}(\lambda) (\beta_{y_{m}}(y)) & = & - \sum_{l} \big( \partial_{y_{m}} \psi_{2, l}(y, y_{m})\big)\big|_{Y_{y_{m}}}  ~ dy_{l_{1}} \wedge \cdots \wedge dy_{l_{q-1}}.    \nonumber
\end{eqnarray}

\noindent
When $y_{m} = 0$, $Q^{q}_{\Abs, 0}(\lambda)$ and $Q^{q-1}_{\Rel, 0}(\lambda)$ are equal to $Q^{q}_{\Abs}(\lambda)$ and $Q^{q-1}_{\Rel}(\lambda)$, respectively.

We next define auxiliary operators ${\mathcal T}^{q}_{\Abs, y_{m}}(\lambda) : \Omega^{q}(Y_{y_{m}}) \rightarrow \Omega^{q-1}(Y_{y_{m}})$ and
${\mathcal T}^{q-1}_{\Rel, y_{m}}(\lambda) : \Omega^{q-1}(Y_{y_{m}}) \rightarrow \Omega^{q}(Y_{y_{m}})$ by

\begin{eqnarray}  \label{E:3.13}
{\mathcal T}^{q}_{\Abs, y_{m}}(\lambda) (\alpha_{y_{m}}(y)) & = &
- \sum_{j} \big( \partial_{y_{m}} \phi_{2, j}(y, y_{m})\big)\big|_{Y_{y_{m}}}  ~ dy_{j_{1}} \wedge \cdots \wedge dy_{j_{q-1}}, \\
{\mathcal T}^{q-1}_{\Rel, y_{m}}(\lambda) (\beta_{y_{m}}(y)) & = &
- \sum_{k} \big( \partial_{y_{m}} \psi_{1, k}(y, y_{m})\big)\big|_{Y_{y_{m}}}  ~ dy_{k_{1}} \wedge \cdots \wedge dy_{k_{q}}.   \nonumber
\end{eqnarray}

\noindent
We finally define $~~{\mathcal R}^{q}_{y_{m}}(\lambda) : \Omega^{q}(Y_{y_{m}}) \oplus \Omega^{q-1}(Y_{y_{m}}) \rightarrow
\Omega^{q}(Y_{y_{m}}) \oplus \Omega^{q-1}(Y_{y_{m}})~~$ by

\begin{eqnarray}  \label{E:3.14}
{\mathcal R}^{q}_{y_{m}}(\lambda) & = & \left( \begin{array}{clcr}  Q^{q}_{\Abs, y_{m}}(\lambda) & {\mathcal T}^{q-1}_{\Rel, y_{m}}(\lambda) \\  {\mathcal T}^{q}_{\Abs, y_{m}}(\lambda) & Q^{q-1}_{\Rel, y_{m}}(\lambda)  \end{array} \right).
\end{eqnarray}

\noindent
Using local coordinates on $U_{\epsilon_{0}}$, we write

\begin{eqnarray}    \label{E:3.15}
& & {\mathcal R}^{q}_{y_{m}}(\lambda) (\alpha_{y_{m}}, ~ \beta_{y_{m}})   \\
& = & \left(  Q^{q}_{\Abs, y_{m}}(\lambda) \alpha_{y_{m}} + {\mathcal T}^{q-1}_{\Rel, y_{m}}(\lambda) \beta_{y_{m}}, ~
{\mathcal T}^{q}_{\Abs, y_{m}}(\lambda) \alpha_{y_{m}} + Q^{q-1}_{\Rel, y_{m}}(\lambda) \beta_{y_{m}}  \right)     \nonumber \\
& = & \bigg( - \sum_{i} \big( \partial_{y_{m}} \phi_{1, i}(y, y_{m})\big)\big|_{Y_{y_{m}}}  ~ dy_{i_{1}} \wedge \cdots \wedge dy_{i_{q}}
~ - ~ \sum_{k} \big( \partial_{y_{m}} \psi_{1, k}(y, y_{m})\big)\big|_{Y_{y_{m}}}  ~ dy_{k_{1}} \wedge \cdots \wedge dy_{k_{q}} ,    \nonumber  \\
& &  - ~ \sum_{j} \big( \partial_{y_{m}} \phi_{2, j}(y, y_{m}) \big)\big|_{Y_{y_{m}}}  ~ dy_{j_{1}} \wedge \cdots \wedge dy_{j_{q-1}}
 - \sum_{l} \big( \partial_{y_{m}}  \psi_{2, l}(y, y_{m})\big)\big|_{Y_{y_{m}}}  ~ dy_{l_{1}} \wedge \cdots \wedge dy_{l_{q-1}} \bigg),  \nonumber
\end{eqnarray}

\noindent
where $\phi_{y_{m}} + \psi_{y_{m}} \in \Omega^{q}(M_{y_{m}})$ satisfies

\begin{eqnarray}    \label{E:3.16}
\left( \Delta^{q}_{M} + \lambda \right) \left( \phi_{y_{m}} + \psi_{y_{m}} \right) = 0, \quad
i_{y_{m}}^{\ast} \left( \phi_{y_{m}} + \psi_{y_{m}} \right) = \alpha_{y_{m}}, \quad
i_{y_{m}}^{\ast} \left( \iota_{\partial_{y_{m}}} \left( \phi_{y_{m}} + \psi_{y_{m}} \right) \right) = \beta_{y_{m}}.
\end{eqnarray}

\noindent
Then, ${\mathcal R}^{q}_{y_{m}}(\lambda)$ is an elliptic pseudodifferential operator of order $1$.

We can identify $Y_{y_{m}}$ with $Y := Y_{0}$ by the geodesic $\gamma_{y}(u)$ and regard ${\mathcal R}^{q}_{y_{m}}(\lambda)$ to be a one parameter family of operators defined on $\Omega^{q}(Y) \oplus \Omega^{q-1}(Y)$. We are going to take the derivative of ${\mathcal R}^{q}_{y_{m}}(\lambda)$ with respect to $y_{m}$ to obtain a Riccati type equation for ${\mathcal R}^{q}_{y_{m}}(\lambda)$, from which we can compute the homogeneous symbol of ${\mathcal R}^{q}_{y_{m}}(\lambda)$. This idea goes back to I. M. Gelfand.
The symbol of $Q^{q}_{\Abs}(\lambda)$ is obtained from the symbol of ${\mathcal R}^{q}_{y_{m}}(\lambda)$.

\vspace{0.2 cm}

We start from $~~\phi_{y_{m}}(y, y_{m}) + \psi_{y_{m}}(y, y_{m}) \in \Omega^{q}(M_{y_{m}})$.
We note that

\begin{eqnarray}    \label{E:3.17}
\partial_{y_{m}} \bigg( \phi_{y_{m}}(y, y_{m}) + \psi_{y_{m}}(y, y_{m}) \bigg)\big|_{Y_{y_{m}}}
& = & - {\mathcal R}^{q}_{y_{m}}(\lambda) \left( \phi_{y_{m}}(y, y_{m}) + \psi_{y_{m}}(y, y_{m}) \right)\big|_{Y_{y_{m}}}.
\end{eqnarray}

\noindent
We take the derivative with respect to $y_{m}$ again to obtain

\begin{eqnarray}    \label{E:3.18}
& & \partial_{y_{m}}^{2} \bigg( \phi_{y_{m}}(y, y_{m}) + \psi_{x_{m}}(y, y_{m}) \bigg)\big|_{Y_{y_{m}}} \\
& = &
- \bigg( \partial_{y_{m}} {\mathcal R}^{q}_{y_{m}}(\lambda) \bigg) \left( \phi_{y_{m}}(y, y_{m}) + \psi_{y_{m}}(y, y_{m}) \right)\big|_{Y_{y_{m}}}
~ + ~ {\mathcal R}^{q}_{y_{m}}(\lambda)^{2} \left( \phi_{y_{m}}(y, y_{m}) + \psi_{y_{m}}(y, y_{m}) \right)\big|_{Y_{y_{m}}}, \nonumber
\end{eqnarray}

\noindent
which together with (\ref{E:3.3}) leads to the following equality.

\begin{eqnarray}   \label{E:3.19}
& & \left\{ - \partial_{y_{m}}{\mathcal R}^{q}_{y_{m}}(\lambda) +
{\mathcal R}^{q}_{y_{m}}(\lambda)^{2} \right\}
 \big( \phi_{y_{m}}(y, y_{m}) + \psi_{y_{m}}(y, y_{m}) \big)\big|_{Y_{y_{m}}}  \\
& = & \left\{\left( A(y, y_{m}) - ~ 2 \omega_{m} \right)  \partial_{y_{m}} ~ + ~  D \big( y, y_{m}, \partial_{y}, \lambda \big)
 - \left( \partial_{y_{m}} \omega_{m} + \omega_{m} \omega_{m} - A(y, y_{m}) \omega_{m} \right)\right\} \big( \phi_{y_{m}}(y, y_{m}) +    \nonumber \\
& & \hspace{3.0 cm} \psi_{y_{m}}(y, y_{m}) \big)\big|_{Y_{y_{m}}}.   \nonumber
\end{eqnarray}

\noindent
Using (\ref{E:3.17}) again, we obtain the following result.

\begin{lemma}   \label{Lemma:3.1}
\begin{eqnarray*}
& & {\mathcal R}^{q}_{y_{m}}(\lambda)^{2} \\
& = & D \big( y, y_{m},\partial_{y}, \lambda \big)  -
\left( A(y, y_{m}) - ~ 2 \omega_{m} \right) {\mathcal R}^{q}_{y_{m}}(\lambda)  +
\partial_{y_{m}}{\mathcal R}^{q}_{y_{m}}(\lambda)  -
 \left( \partial_{y_{m}} \omega_{m} + \omega_{m} \omega_{m} - A(y, y_{m}) \omega_{m} \right).
\end{eqnarray*}
\end{lemma}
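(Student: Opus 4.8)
The plan is to obtain the identity as a purely algebraic consequence of the Riccati-type relation already encoded in (\ref{E:3.17})--(\ref{E:3.19}), together with the fact that the boundary data defining ${\mathcal R}^q_{y_m}(\lambda)$ are arbitrary. First I would record the two inputs. Equation (\ref{E:3.17}) expresses that, for the solution $\phi_{y_m}+\psi_{y_m}$ of (\ref{E:3.16}), the first normal derivative of its coefficient array restricted to $Y_{y_m}$ equals $-{\mathcal R}^q_{y_m}(\lambda)$ applied to the restriction itself; differentiating this once more in $y_m$ and eliminating the normal derivative that reappears by a second use of (\ref{E:3.17}) yields (\ref{E:3.18}), namely $\partial_{y_m}^2(\phi_{y_m}+\psi_{y_m})|_{Y_{y_m}} = \big(-\partial_{y_m}{\mathcal R}^q_{y_m}(\lambda) + {\mathcal R}^q_{y_m}(\lambda)^2\big)(\phi_{y_m}+\psi_{y_m})|_{Y_{y_m}}$. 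On the other hand, $(\Delta^q_M + \lambda)(\phi_{y_m}+\psi_{y_m})=0$ together with the expression (\ref{E:3.3}) for $\Delta^q_M+\lambda$ lets me solve for $\partial_{y_m}^2(\phi_{y_m}+\psi_{y_m})$ in terms of the lower normal derivatives and the coefficient operators $A-2\omega_m$, $D$, and $\partial_{y_m}\omega_m + \omega_m\omega_m - A\omega_m$. Restricting to $Y_{y_m}$ and equating the two expressions for $\partial_{y_m}^2(\phi_{y_m}+\psi_{y_m})|_{Y_{y_m}}$ reproduces exactly (\ref{E:3.19}).

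Next I would apply (\ref{E:3.17}) once more, this time to the right-hand side of (\ref{E:3.19}), in order to replace $\partial_{y_m}(\phi_{y_m}+\psi_{y_m})|_{Y_{y_m}}$ by $-{\mathcal R}^q_{y_m}(\lambda)(\phi_{y_m}+\psi_{y_m})|_{Y_{y_m}}$; this is legitimate because $A$ and $\omega_m$ are endomorphism-valued functions and so commute with restriction to $Y_{y_m}$. Solving the resulting identity for ${\mathcal R}^q_{y_m}(\lambda)^2(\phi_{y_m}+\psi_{y_m})|_{Y_{y_m}}$ gives
\[
{\mathcal R}^q_{y_m}(\lambda)^2(\phi_{y_m}+\psi_{y_m})|_{Y_{y_m}} = \Big( D - \big(A-2\omega_m\big){\mathcal R}^q_{y_m}(\lambda) + \partial_{y_m}{\mathcal R}^q_{y_m}(\lambda) - \big(\partial_{y_m}\omega_m + \omega_m\omega_m - A\omega_m\big) \Big)(\phi_{y_m}+\psi_{y_m})|_{Y_{y_m}}.
\]
By (\ref{E:3.16}) the restriction $(\phi_{y_m}+\psi_{y_m})|_{Y_{y_m}}$ is precisely the chosen data $(\alpha_{y_m},\beta_{y_m}) \in \Omega^q(Y_{y_m}) \oplus \Omega^{q-1}(Y_{y_m})$, which is arbitrary; hence the two operators on $\Omega^q(Y_{y_m}) \oplus \Omega^{q-1}(Y_{y_m})$ coincide, which is the assertion of the lemma.

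The routine part is the two rounds of differentiation in $y_m$ and the term-by-term bookkeeping of (\ref{E:3.3}); the one point that needs care --- the \emph{main obstacle} --- is making these manipulations rigorous as operator identities rather than identities evaluated on a single section. Concretely, one has to check that $y_m \mapsto {\mathcal R}^q_{y_m}(\lambda)$ is a genuinely differentiable one-parameter family of first-order pseudodifferential operators after the identification of $Y_{y_m}$ with $Y$ via the geodesics $\gamma_y(u)$, that $\partial_{y_m}$ commutes with the moving restriction $\cdot|_{Y_{y_m}}$, and that the assignment of the solution of (\ref{E:3.16}) to its boundary data is onto $\Omega^q(Y_{y_m}) \oplus \Omega^{q-1}(Y_{y_m})$, so that the final ``arbitrary section'' step is valid. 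Since for the intended application only the symbol calculus is used, one may alternatively carry the entire computation out at the level of homogeneous symbols, where differentiability in $y_m$ is automatic and the argument becomes formal.
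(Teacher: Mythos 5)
Your argument is exactly the paper's proof: establish the first-order relation (\ref{E:3.17}), differentiate it to get (\ref{E:3.18}), substitute the expression (\ref{E:3.3}) for $\Delta^{q}_{M}+\lambda$ into $(\Delta^{q}_{M}+\lambda)(\phi_{y_{m}}+\psi_{y_{m}})=0$ to obtain (\ref{E:3.19}), then use (\ref{E:3.17}) once more to eliminate the remaining first normal derivative and invoke the arbitrariness of the boundary data $(\alpha_{y_{m}},\beta_{y_{m}})$. The signs and the bookkeeping of the coefficient operators all check out, so the proposal is correct and coincides with the paper's route.
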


We now compute the homogeneous symbol in this coordinate system using the above lemma.
We denote the homogeneous symbol of ${\mathcal R}^{q}_{y_{m}}(\lambda)$ and
$D \big( y, y_{m},\partial_{y}, \lambda \big)$ by

\begin{eqnarray}   \label{E:3.20}
& & \sigma \left( {\mathcal R}^{q}_{y_{m}}(\lambda) \right)(y, y_{m}, \xi, \lambda) ~ \sim ~
\alpha_{1}(y, y_{m}, \xi, \lambda) + \alpha_{0}(y, y_{m}, \xi, \lambda) +
\alpha_{-1}(y, y_{m}, \xi, \lambda) + \cdots,  \\
& & \sigma \left( D \big( y, y_{m},\partial_{y}, \lambda \big) \right) ~ = ~
p_{2}(y, y_{m}, \xi, \lambda) + p_{1}(y, y_{m}, \xi) + p_{0}(y, y_{m}, \xi),   \nonumber
\end{eqnarray}

\noindent
where for an $\binom{m}{q} \times \binom{m}{q}$ identity matrix $\Id$, (\ref{E:3.5}) shows that

\begin{eqnarray}   \label{E:3.21}
& & p_{2}(y, y_{m}, \xi, \lambda) ~ = ~ \left( \sum_{\alpha, \beta = 1}^{m-1} g^{\alpha\beta}(y,  y_{m}) \xi_{\alpha} \xi_{\beta} + \lambda \right) \Id
 ~ = ~ \left( | \xi |^{2} + \lambda \right) \Id,   \\
& & p_{1}(y,  y_{m}, \xi) ~ = ~ - i
\sum_{\alpha, \beta = 1}^{m-1} \left( \frac{1}{2} g^{\alpha\beta}(y,  y_{m}) \partial_{y_{\alpha}} \ln |g|(y,  y_{m}) +
g^{\alpha\beta;\alpha}(y,  y_{m}) \right) \xi_{\beta} \Id
- 2 i \sum_{\alpha, \beta = 1}^{m-1} g^{\alpha\beta} \omega_{\alpha} \xi_{\beta} ,   \nonumber \\
& & p_{0}(y,  y_{m}, \xi) ~ = ~
- \sum_{\alpha, \beta = 1}^{m-1} g^{\alpha\beta} \left( \partial_{y_{\alpha}} \omega_{\beta} + \omega_{\alpha} \omega_{\beta} - \sum_{\gamma = 1}^{m-1} \Gamma_{\alpha\beta}^{\gamma} \omega_{\gamma} \right) - E_{q} . \nonumber
\end{eqnarray}

\noindent
The symbol of $\partial_{y_{m}} {\mathcal R}^{q}_{y_{m}}(\lambda)$ is given by

\begin{eqnarray}   \label{E:3.22}
\sigma \left( \partial_{y_{m}} {\mathcal R}^{q}_{y_{m}}(\lambda) \right) (y,y_m,\xi,\lambda)  \sim    \partial_{y_{m}} \alpha_{1}(y, y_{m}, \xi, \lambda)  +
\partial_{y_{m}} \alpha_{0}(y, y_{m}, \xi, \lambda) + \partial_{y_{m}} \alpha_{-1}(y,  y_{m}, \xi, \lambda) +  \cdots .
\end{eqnarray}

\noindent
It is well known \cite{Gi1,Sh} that for $D_{y} = \frac{1}{i} \partial_{y}$,

\begin{eqnarray}   \label{E:3.23}
\sigma \left( {\mathcal R}_{y_{m}}(\lambda)^{2} \right) & \sim & \sum_{k=0}^{\infty} \sum_{\stackrel{|\omega|+i+j=k}{i, j \geq 0}} \frac{1}{\omega !} \partial^{\omega}_{\xi} \alpha_{1-i}(y, y_{m}, \xi, \lambda) \cdot D_{y}^{\omega}\alpha_{1-j}(y, y_{m}, \xi, \lambda)       \\
& = & \alpha_{1}^{2} + \left(\partial_{\xi} \alpha_{1} \cdot D_{y} \alpha_{1} +
2 \alpha_{1} \cdot \alpha_{0} \right)     \nonumber \\
& & + ~ \left( 2 \alpha_{1} \alpha_{-1} + \alpha_{0}^{2} - i (\partial_{\xi} \alpha_{0} ) (\partial_{y} \alpha_{1} )- i (\partial_{\xi} \alpha_{1} )(\partial_{y} \alpha_{0}) - \sum_{|\omega|=2}
\frac{1}{\omega !} (\partial^{\omega}_{\xi} \alpha_{1})( \partial^{\omega}_{y} \alpha_{1}) \right) ~ + ~ \cdots . \nonumber
\end{eqnarray}

\noindent
Using Lemma \ref{Lemma:3.1} with (\ref{E:3.21}) - (\ref{E:3.23}), we can compute the homogeneous symbol of
${\mathcal R}^{q}_{y_{m}}(\lambda)$. For example, the first three terms are given as follows.

\begin{eqnarray}  \label{E:3.24}
& & \alpha_{1}(y, y_{m}, \xi, \lambda) ~ = ~ \sqrt{| \xi |^{2} + \lambda} ~ \Id,  \\
& & \alpha_{0}(y, y_{m}, \xi, \lambda) ~ = ~ \frac{1}{2 \sqrt{| \xi |^{2} + \lambda}} \left\{- \partial_{\xi} \alpha_{1} \cdot D_{y} \alpha_{1} + p_{1}
- \left( A(y, y_{m}) - ~ 2 \omega_{m} \right) \alpha_{1} + \partial_{y_{m}} \alpha_{1} \right\},    \nonumber \\
& & \alpha_{-1}(y, y_{m}, \xi, \lambda) ~ = ~ \frac{1}{2 \sqrt{|\xi|^2 + \lambda}}
\bigg\{ \sum_{|\omega|=2} \frac{1}{\omega !} (\partial_{\xi}^{\omega} \alpha_{1} )(\partial_{y}^{\omega} \alpha_{1} )+
i (\partial_{\xi} \alpha_{0})( \partial_{y} \alpha_{1} )+ i (\partial_{\xi} \alpha_{1})( \partial_{y} \alpha_{0}) - \alpha_{0}^{2}   \nonumber  \\
& &~~~~~~~~\hspace{3.0 cm}  ~ + ~ p_{0} - (A(y, y_{m}) - 2 \omega_{m}) \alpha_{0} + \partial_{y_{m}} \alpha_{0} - (\partial_{y_{m}} \omega_{m} + \omega_{m} \omega_{m} - A(y, y_{m}) \omega_{m}) \bigg\}.   \nonumber
\end{eqnarray}

Let $~ {\mathcal F}_{y_{m}} : \Omega^{q}(Y_{y_{m}}) \rightarrow \Omega^{q}(Y_{y_{m}}) \oplus \Omega^{q-1}(Y_{y_{m}}) ~$ and
$~ {\mathcal G}_{y_{m}} : \Omega^{q}(Y_{y_{m}}) \oplus \Omega^{q-1}(Y_{y_{m}}) \rightarrow \Omega^{q}(Y_{y_{m}}) ~$ be the natural inclusion and projection, respectively, {\it i.e.} ${\mathcal F}_{y_{m}} (\phi) = (\phi, 0)$ and ${\mathcal G}_{y_{m}} (\phi, \psi) = \phi$.
Then, by (\ref{E:3.14}) it follows that

\begin{eqnarray}    \label{E:3.25}
Q^{q}_{\Abs, y_{m}}(\lambda) & = &  {\mathcal G}_{y_{m}} \cdot {\mathcal R}^{q}_{y_{m}}(\lambda) \cdot {\mathcal F}_{y_{m}},
\end{eqnarray}

\noindent
which shows that the symbol of $Q^{q}_{\Abs, y_{m}}(\lambda)$ is given by

\begin{eqnarray}    \label{E:3.26}
\sigma \left( Q^{q}_{\Abs, y_{m}}(\lambda) \right) & = & ( \I,~ \Oo) ~ \big\{ \sigma \left( {\mathcal R}^{q}_{y_{m}}(\lambda) \right) \big\} ~ ( \I,~ \Oo)^{T},
\end{eqnarray}

\noindent
where $\I$ is the $\binom{m-1}{q} \times \binom{m-1}{q}$ identity matrix and $\Oo$ is the $\binom{m-1}{q} \times \binom{m-1}{q-1}$ zero matrix.

We consider the boundary normal coordinate system $(y, y_{m}) = (y_{1}, \cdots, y_{m-1}, y_{m})$ on a collar neighborhood of $Y$
introduced at the beginning of this section. For $y_{0} \in Y$, we denote $e_{i} := \partial_{x_{i}}(y_{0})$ and $e^{i} := dx_{i}(y_{0})$ for
$1 \leq i \leq m$. Eq.(\ref{E:3.1}) and (\ref{E:3.2}) show that $\{ e_{1}, \cdots, e_{m} \}$ and $\{ e^{1}, \cdots, e^{m} \}$
at $y_{0} \in Y$ satisfy the following relations.

\begin{eqnarray}    \label{E:3.55}
\nabla^{M}_{e_{\alpha}} e^{\beta} = \omega_{\alpha}(e^{\beta}) =  \kappa_{\alpha} \delta_{\alpha \beta} e^{m}, \qquad
\nabla^{M}_{e_{\alpha}} e^{m} = - \kappa_{\alpha} e^{\alpha},  \qquad
\nabla^{M}_{e_{m}} e^{\alpha} = \kappa_{\alpha} e^{\alpha}, \qquad \nabla^{M}_{e_{m}} e^{m} = 0,
\end{eqnarray}

\noindent
The following result is straightforward (cf. Lemma 1.5.4 of \cite{Gi3}).

\begin{lemma}    \label{Lemma:3.2}
For $1 \leq \alpha \leq m-1$ and $1 \leq i_{1} < \cdots < i_{q} \leq m-1$, the following equalities hold.

\begin{eqnarray*}
& & \omega_{\alpha} \left( e^{i_{1}} \wedge \cdots \wedge e^{i_{q}} \right) ~ = ~ \kappa_{\alpha} ~ e^{m} \wedge \left( \iota_{e_{\alpha}} e^{i_{1}} \wedge \cdots \wedge e^{i_{q}} \right), \\
& & \omega_{\alpha} \left( e^{m} \wedge e^{j_{1}} \wedge \cdots \wedge e^{j_{q-1}} \right) ~ = ~
- \kappa_{\alpha} ~ e^{\alpha} \wedge e^{j_{1}} \wedge \cdots \wedge e^{j_{q-1}},  \\
& & \omega_{m} \left( e^{i_{1}} \wedge \cdots \wedge e^{i_{q}} \right)
~ = ~ (\kappa_{i_{1}} + \cdots + \kappa_{i_{q}}) ~ e^{i_{1}} \wedge \cdots \wedge e^{i_{q}},  \\
& & \omega_{m} \left( e^{m} \wedge e^{j_{1}} \wedge \cdots \wedge e^{j_{q-1}} \right)
~ = ~ (\kappa_{j_{1}} + \cdots + \kappa_{j_{q-1}}) ~ e^{m} \wedge e^{j_{1}} \wedge \cdots \wedge e^{j_{q-1}}.
\end{eqnarray*}
\end{lemma}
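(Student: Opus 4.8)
The plan is to verify each of the four identities by a direct computation of the action of the connection form $\omega_\alpha$ (resp.\ $\omega_m$) on decomposable forms, using the Leibniz rule for $\nabla^M$ on $\wedge^\bullet T^*M$ together with the structure equations \eqref{E:3.55}. Recall that if $\nabla^M e^i = \sum_k \omega(\cdot)^i_{\ k}\, e^k$ as a connection on $T^*M$, then its extension to $\wedge^q T^*M$ acts as a derivation: for a decomposable $q$-form $\eta = e^{i_1}\wedge\cdots\wedge e^{i_q}$,
\begin{eqnarray*}
\omega_\alpha(\eta) \;=\; \sum_{r=1}^{q} e^{i_1}\wedge\cdots\wedge \omega_\alpha(e^{i_r})\wedge\cdots\wedge e^{i_q}.
\end{eqnarray*}
So the whole lemma reduces to plugging in $\omega_\alpha(e^{i_r})$ and $\omega_\alpha(e^m)$ from \eqref{E:3.55} and then reorganizing each wedge sum.

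For the first identity, with all $i_r\le m-1$, we have $\omega_\alpha(e^{i_r})=\kappa_\alpha\delta_{\alpha i_r}e^m$, so only the term $r$ with $i_r=\alpha$ survives; carrying that $e^m$ past the leading factors and recognizing the remaining wedge as $\iota_{e_\alpha}(e^{i_1}\wedge\cdots\wedge e^{i_q})$ (with the correct sign, since $\iota_{e_\alpha}$ is precisely the signed deletion of the slot $i_r=\alpha$) gives the stated formula; if no $i_r$ equals $\alpha$, both sides vanish. For the second identity, applying the derivation to $e^m\wedge e^{j_1}\wedge\cdots\wedge e^{j_{q-1}}$: the $e^m$-slot contributes $\omega_\alpha(e^m)=-\kappa_\alpha e^\alpha$, while each $e^{j_s}$-slot contributes a term $\propto e^m\wedge\cdots\wedge e^m\wedge\cdots$ which vanishes by repetition of $e^m$; what remains is exactly $-\kappa_\alpha\, e^\alpha\wedge e^{j_1}\wedge\cdots\wedge e^{j_{q-1}}$. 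For the two $\omega_m$ identities, \eqref{E:3.55} gives $\omega_m(e^\alpha)=\kappa_\alpha e^\alpha$ and $\omega_m(e^m)=0$; feeding these into the derivation, every slot reproduces the same basis form times its curvature, and summing the eigenvalues yields the factor $(\kappa_{i_1}+\cdots+\kappa_{i_q})$ (resp.\ $(\kappa_{j_1}+\cdots+\kappa_{j_{q-1}})$, the $e^m$-slot contributing nothing).

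The only point requiring care — and the closest thing to an obstacle — is bookkeeping the signs in the first identity, i.e.\ checking that removing the slot holding $e^\alpha$ and prefixing $e^m$ indeed matches the sign convention for the interior product $\iota_{e_\alpha}$ used elsewhere in the paper; this is the standard $(-1)^{r-1}$ from moving $e^m$ into position, and it cancels against the sign defining $\iota_{e_\alpha}$, so the identity holds as written. All four computations are local at $y_0$ and use only \eqref{E:3.1}, \eqref{E:3.2}, \eqref{E:3.55}; no further input is needed, which is why the statement is labelled ``straightforward.''
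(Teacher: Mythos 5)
Your proof is correct and is exactly the computation the paper leaves implicit: the paper offers no proof of Lemma \ref{Lemma:3.2}, merely calling it ``straightforward'' with a pointer to Lemma 1.5.4 of \cite{Gi3}, and your derivation --- extending $\omega_{\alpha}$, $\omega_{m}$ as derivations of the wedge product and feeding in the structure equations (\ref{E:3.55}) --- is the intended argument. The sign bookkeeping in the first identity, where the $(-1)^{r-1}$ from moving $e^{m}$ to the front matches the sign in $\iota_{e_{\alpha}}$, is handled correctly.
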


\noindent
When $\Dim M = 3$, Lemma \ref{Lemma:3.2} is reduced to the following result.

\begin{corollary}   \label{Corollary:3.4}
Let $\Dim M = 3$.
For $p = 1$ and an ordered basis $\{ e^{1}, e^{2}, e^{3} \}$ of $T^{\ast}M\big|_{U}$, we can write
$\omega_{1}$, $\omega_{2}$ and $\omega_{m}$ ($m = 3$) by

\begin{eqnarray}   \label{E:3.27}
\omega_{1} & = & \left( \begin{array}{clcr} 0 & 0 & - \kappa_{1} \\ 0 & 0 & 0 \\ \kappa_{1} & 0 & 0 \end{array} \right), \qquad \omega_{1} ~ \omega_{1} ~ = ~ - \kappa_{1}^{2} \left( \begin{array}{clcr} 1 & 0 & 0 \\ 0 & 0 & 0 \\ 0& 0 & 1 \end{array} \right),  \\
\omega_{2} & = & \left( \begin{array}{clcr} 0 & 0 & 0 \\ 0 & 0 & - \kappa_{2} \\ 0 & \kappa_{2} & 0 \end{array} \right), \qquad \omega_{2} ~ \omega_{2} ~ = ~ - \kappa_{2}^{2} \left( \begin{array}{clcr} 0 & 0 & 0 \\ 0 & 1 & 0 \\ 0& 0 & 1 \end{array} \right),  \nonumber  \\
\omega_{m} & = & \left( \begin{array}{clcr} \kappa_{1} & 0 & 0 \\ 0 & \kappa_{2} & 0 \\ 0 & 0 & 0 \end{array} \right), \qquad \omega_{m} ~ \omega_{m} ~ = ~ \left( \begin{array}{clcr} \kappa_{1}^{2} & 0 & 0 \\ 0 & \kappa_{2}^{2} & 0 \\ 0& 0 & 0 \end{array} \right).  \nonumber
\end{eqnarray}

\noindent
For $p = 2$ and an ordered basis $\{ e^{1} \wedge e^{2}, ~ e^{3} \wedge e^{1}, ~ e^{3} \wedge e^{2} \}$ of $\wedge^{2} T^{\ast}M\big|_{U}$, we can write
$\omega_{1}$, $\omega_{2}$ and $\omega_{m}$ ($m = 3$) by

\begin{eqnarray}   \label{E:3.28}
\omega_{1} & = & \left( \begin{array}{clcr} 0 & 0 & - \kappa_{1} \\ 0 & 0 & 0 \\ \kappa_{1} & 0 & 0 \end{array} \right), \qquad \omega_{1} ~ \omega_{1} ~ = ~ - \kappa_{1}^{2} \left( \begin{array}{clcr} 1 & 0 & 0 \\ 0 & 0 & 0 \\ 0& 0 & 1 \end{array} \right),  \\
\omega_{2} & = & \left( \begin{array}{clcr} 0 & \kappa_{2} & 0 \\ - \kappa_{2} & 0 & 0 \\ 0 & 0 & 0 \end{array} \right), \qquad \omega_{2} ~ \omega_{2} ~ = ~ - \kappa_{2}^{2} \left( \begin{array}{clcr} 1 & 0 & 0 \\ 0 & 1 & 0 \\ 0& 0 & 0 \end{array} \right),  \nonumber  \\
\omega_{m} & = & \left( \begin{array}{clcr} \kappa_{1} + \kappa_{2} & 0 & 0 \\ 0 & \kappa_{1} & 0 \\ 0 & 0 & \kappa_{2} \end{array} \right), \qquad \omega_{m} ~ \omega_{m} ~ = ~ \left( \begin{array}{clcr} (\kappa_{1} + \kappa_{2})^{2} & 0 & 0 \\ 0 & \kappa_{1}^{2} & 0 \\
0& 0 & \kappa_{2}^{2} \end{array} \right).  \nonumber
\end{eqnarray}
\end{corollary}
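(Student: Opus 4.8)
The plan is to derive Corollary~\ref{Corollary:3.4} directly from Lemma~\ref{Lemma:3.2} by specializing to $m=3$ and evaluating $\omega_{1}$, $\omega_{2}$ and $\omega_{m}$ on each element of the two ordered bases under consideration. Throughout I use the convention that the $k$-th column of the matrix representing an endomorphism of $\wedge^{p}T^{\ast}M|_{U}$ is its image on the $k$-th basis vector, expressed again in that basis.

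\emph{The case $p=1$.} Here the basis is $\{e^{1},e^{2},e^{3}\}$ with $e^{3}=e^{m}$. Applying the first two identities of Lemma~\ref{Lemma:3.2} with $q=1$ gives, for $\alpha\in\{1,2\}$, that $\omega_{\alpha}(e^{\beta})=\kappa_{\alpha}\,e^{m}\wedge(\iota_{e_{\alpha}}e^{\beta})=\kappa_{\alpha}\delta_{\alpha\beta}\,e^{m}$ for $\beta\le 2$, and $\omega_{\alpha}(e^{m})=-\kappa_{\alpha}e^{\alpha}$ (empty $j$-list); the last two identities give $\omega_{m}(e^{\beta})=\kappa_{\beta}e^{\beta}$ for $\beta\le 2$ and $\omega_{m}(e^{m})=0$ (empty sum). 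Assembling columns produces the three displayed matrices, and squaring each $3\times 3$ matrix directly gives the stated products $\omega_{1}\omega_{1}$, $\omega_{2}\omega_{2}$, $\omega_{m}\omega_{m}$.

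\emph{The case $p=2$.} Now the basis is $\{e^{1}\wedge e^{2},\ e^{3}\wedge e^{1},\ e^{3}\wedge e^{2}\}$, and I would again use Lemma~\ref{Lemma:3.2}, this time with $q=2$. On $e^{1}\wedge e^{2}$ (which contains no $e^{m}$) the first identity applies: $\omega_{1}(e^{1}\wedge e^{2})=\kappa_{1}e^{m}\wedge\iota_{e_{1}}(e^{1}\wedge e^{2})=\kappa_{1}e^{3}\wedge e^{2}$ and $\omega_{2}(e^{1}\wedge e^{2})=\kappa_{2}e^{m}\wedge\iota_{e_{2}}(e^{1}\wedge e^{2})=-\kappa_{2}e^{3}\wedge e^{1}$; on $e^{3}\wedge e^{j}=e^{m}\wedge e^{j}$ the second identity applies, e.g. $\omega_{2}(e^{3}\wedge e^{1})=-\kappa_{2}e^{2}\wedge e^{1}=\kappa_{2}e^{1}\wedge e^{2}$ and $\omega_{1}(e^{3}\wedge e^{1})=-\kappa_{1}e^{1}\wedge e^{1}=0$, and similarly for the remaining entries. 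The last two identities of Lemma~\ref{Lemma:3.2} show that $\omega_{m}$ is diagonal on this basis with entries $\kappa_{1}+\kappa_{2}$, $\kappa_{1}$, $\kappa_{2}$. Collecting the columns yields the matrices for $\omega_{1}$, $\omega_{2}$, $\omega_{m}$, and a direct matrix multiplication gives the three products.

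The argument is purely computational; the only point requiring genuine care — what I regard as the "main obstacle" — is the sign bookkeeping, coming from interior-product identities such as $\iota_{e_{2}}(e^{1}\wedge e^{2})=-e^{1}$ and from the deliberate ordering $e^{3}\wedge e^{1}$ (rather than $e^{1}\wedge e^{3}$) in the $p=2$ basis, which is responsible for the off-diagonal signs in the matrices for $\omega_{1}$ and $\omega_{2}$.
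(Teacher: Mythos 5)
Your proposal is correct and follows exactly the route the paper intends: the paper presents Corollary \ref{Corollary:3.4} as the specialization of Lemma \ref{Lemma:3.2} to $m=3$, and your column-by-column evaluation of $\omega_{1},\omega_{2},\omega_{m}$ on the two ordered bases (including the sign from $\iota_{e_{2}}(e^{1}\wedge e^{2})=-e^{1}$ and the ordering $e^{3}\wedge e^{1}$) reproduces all the stated matrices and their squares.
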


\vspace{0.2 cm}
\noindent
We denote

\begin{eqnarray}    \label{E:3.29}
& & ( \I,~ \Oo) ~ E_{q} ~ ( \I,~ \Oo)^{T} ~ = ~ {\widetilde E}_{q}, \qquad
( \I,~ \Oo) ~ \omega_{m}(y, y_{m}) ~ ( \I,~ \Oo)^{T} ~ = ~ \widetilde{\omega}_{m}(y, y_{m}), \\
& & ( \I,~ \Oo) ~ \omega_{\alpha}(y, y_{m})  ~ ( \I,~ \Oo)^{T}
~ = ~ \widetilde{\omega}_{\alpha}(y, y_{m}), \qquad
 ( \I,~ \Oo) ~ \omega_{\alpha}(y, y_{m}) \omega_{\beta}(y, y_{m})  ~ ( \I,~ \Oo)^{T}
~ = ~ \widetilde{\omega_{\alpha} \omega_{\beta}}(y, y_{m}).    \nonumber
\end{eqnarray}

\noindent
When $m=3$, ${\widetilde E}_{1}$ and ${\widetilde E}_{2}$ are given by (\ref{E:3.101}) and (\ref{E:3.102}) as follows.

\begin{eqnarray}      \label{E:3.77}
{\widetilde E}_{1} & = &
\left( \begin{array}{clcr} - \Ric^{M}_{11} & - \Ric^{M}_{12} \\  - \Ric^{M}_{12} & - \Ric^{M}_{22}  \end{array} \right), \qquad
{\widetilde E}_{2} ~ = ~ \bigg( - \Ric^{M}_{33} \bigg).
\end{eqnarray}

\noindent
Moreover, we use (\ref{E:2.33}) to obtain the following equalities.

\begin{eqnarray}   \label{E:3.78}
\Tr {\widetilde E}_{1} & = & - \tau_{M} + \Ric^{M}_{33} ~ = ~ - \frac{1}{2} \big( \tau_{M} + \tau_{Y} \big) + H_{2},  \\
\Tr {\widetilde E}_{2} & = & - \Ric^{M}_{33} ~ = ~ - \frac{1}{2} \big( \tau_{M} - \tau_{Y} \big) - H_{2}.   \nonumber
\end{eqnarray}

\noindent
We also denote

\begin{eqnarray}    \label{E:3.30}
& & \widetilde{p}_{2}(y, y_{m}, \xi, \lambda) ~ = ~ \left( \sum_{\alpha, \beta = 1}^{m-1} g^{\alpha\beta}(y,  y_{m}) \xi_{\alpha} \xi_{\beta} + \lambda \right) \widetilde{\Id}
 ~ = ~ \left( | \xi |^{2} + \lambda \right) \widetilde{\Id},   \\
& & \widetilde{p}_{1}(y,  y_{m}, \xi) ~ = ~ - i
\sum_{\alpha, \beta = 1}^{m-1} \left( \frac{1}{2} g^{\alpha\beta}(y,  y_{m}) \partial_{y_{\alpha}} \ln |g|(y,  y_{m}) + g^{\alpha\beta;\alpha}(y,  y_{m}) \right) \xi_{\beta} \widetilde{\Id}
- 2 i \sum_{\alpha, \beta = 1}^{m-1} g^{\alpha\beta} \widetilde{\omega}_{\alpha} \xi_{\beta},   \nonumber \\
& & \widetilde{p}_{0}(y,  y_{m}, \xi) ~ = ~
- \sum_{\alpha, \beta = 1}^{m-1} g^{\alpha\beta} \left( \partial_{y_{\alpha}} \widetilde{\omega}_{\beta} +
\widetilde{\omega_{\alpha} \omega_{\beta}} -
\sum_{\gamma = 1}^{m-1} \Gamma_{\alpha\beta}^{\gamma} \widetilde{\omega}_{\gamma} \right) - \widetilde{E}_{q} , \nonumber  \\
& & \widetilde{A}(y, y_{m}) ~ = ~ \left\{ - \frac{1}{2} \sum_{\alpha, \beta = 1}^{m-1} g^{\alpha\beta}(y, y_{m}) ~ g_{\alpha\beta;m}(y, y_{m}) \right\} \widetilde{\Id},    \nonumber
\end{eqnarray}

\noindent
where $~\widetilde{\Id}~$ is the $\binom{m-1}{q} \times \binom{m-1}{q}$ identity matrix.

\vspace{0.2 cm}
\noindent
{\it Remark} : At $(y_{0}, 0) \in Y$, Lemma \ref{Lemma:3.2} (or Corollary \ref{Corollary:3.4}) shows that $\widetilde{\omega}_{\alpha}(y_{0}, 0)  =  0$, and hence $\widetilde{p}_{1}(y_{0}, 0) = 0$ by (\ref{E:3.1}).
Since $\widetilde{\omega_{\alpha} \omega_{\alpha}} \neq 0$ as shown in (\ref{E:3.27}) and (\ref{E:3.28}), it follows that

\begin{eqnarray}   \label{E:3.50}
\bigg(\widetilde{p}_{1}(y_{0}, 0)\bigg)^{2} & = & 0, \qquad
(\I, ~ \Oo) ~ p_{1}^{2}(y_{0}, 0, \xi) ~ (\I, ~ \Oo)^{T} ~ = ~ - 4 ~ \sum_{\alpha, \beta=1}^{m-1}
\widetilde{\omega_{\alpha} \omega_{\beta}} (y_{0}, 0) ~ \xi_{\alpha} \xi_{\beta} .
\end{eqnarray}

\vspace{0.2 cm}

\noindent
Using Lemma \ref{Lemma:3.1} with (\ref{E:3.26}), we can compute the homogeneous symbol of
$Q^{q}_{\Abs, y_{m}}(\lambda)$, whose first three terms are given as follows (cf. (1.7)-(1.9) in \cite{LU}, (2.2)-(2.3) in \cite{PS} for $q=0$).

\begin{theorem}  \label{Theorem:3.4}
In the boundary normal coordinate system given at the beginning of this section, we denote the homogeneous symbol of $Q^{q}_{\Abs, y_{m}}(\lambda)$ by
\begin{eqnarray*}
\sigma \left( Q^{q}_{\Abs, y_{m}}(\lambda) \right)(y, y_{m}, \xi, \lambda) ~ \sim ~
\widetilde{\alpha}_{1}(y, y_{m}, \xi, \lambda) + \widetilde{\alpha}_{0}(y, y_{m}, \xi, \lambda) +
\widetilde{\alpha}_{-1}(y, y_{m}, \xi, \lambda) + \cdots.
\end{eqnarray*}
Then,
\begin{eqnarray*}
& & \widetilde{\alpha}_{1}(y, y_{m}, \xi, \lambda)
~ = ~ ( \I,~ \Oo) ~ \alpha_{1} ~ ( \I,~ \Oo)^{T}
 ~ = ~ \sqrt{| \xi |^{2} + \lambda} ~ \widetilde{\Id},  \\
& & \widetilde{\alpha}_{0}(y, y_{m}, \xi, \lambda) ~ = ~ ( \I,~ \Oo) ~ \alpha_{0} ~ ( \I,~ \Oo)^{T}  \\
& & \hspace{2.5 cm} = ~~ \frac{1}{2 \sqrt{| \xi |^{2} + \lambda}} \left\{- \partial_{\xi}  \widetilde{\alpha}_{1} \cdot D_{y} \widetilde{\alpha}_{1} + \widetilde{p}_{1}
- \left( \widetilde{A}(y, y_{m}) - ~ 2 \widetilde{\omega}_{m} \right) \widetilde{\alpha}_{1} + \partial_{y_{m}} \widetilde{\alpha}_{1} \right\},     \\
& & \widetilde{\alpha}_{-1}(y, y_{m}, \xi, \lambda) ~ = ~ ( \I,~ \Oo) ~ \alpha_{-1} ~ ( \I,~ \Oo)^{T}  \\
& & \hspace{0.5 cm} = ~ \frac{1}{2 \sqrt{|\xi|^2 + \lambda}}
\bigg\{ \sum_{|\omega|=2} \frac{1}{\omega !} (\partial_{\xi}^{\omega} \widetilde{\alpha}_{1} )(\partial_{y}^{\omega} \widetilde{\alpha}_{1} )+
i (\partial_{\xi} \widetilde{\alpha}_{0})( \partial_{y} \widetilde{\alpha}_{1} )+ i (\partial_{\xi} \widetilde{\alpha}_{1})( \partial_{y} \widetilde{\alpha}_{0})
 -   ( \I,~ \Oo) ~ \alpha_{0}^{2} ~ ( \I,~ \Oo)^{T}   \\
& & \hspace{2.5 cm} + ~ \widetilde{p}_{0}  - \big(\widetilde{A}(y, y_{m}) - 2 \widetilde{\omega}_{m} \big) \widetilde{\alpha}_{0} + \partial_{y_{m}} \widetilde{\alpha}_{0} - \big(\partial_{y_{m}} \widetilde{\omega_{m}} + \widetilde{\omega_{m}} \widetilde{\omega_{m}} - \widetilde{A}(y, y_{m}) \widetilde{\omega}_{m} \big) \bigg\},
\end{eqnarray*}
where at $(x, 0) \in Y$,

\begin{eqnarray*}
( \I,~ \Oo) ~ \alpha_{0}^{2}(y, 0) ~ ( \I,~ \Oo)^{T} & = & \bigg( \widetilde{\alpha}_{0}(y, 0) \bigg)^{2} ~ - ~
\frac{1}{| \xi |^{2} + \lambda} \sum_{\alpha=1}^{m-1} ~ \widetilde{\omega_{\alpha} \omega_{\beta}}(y, 0) ~ \xi_{\alpha} \xi_{\beta}.
\end{eqnarray*}
\end{theorem}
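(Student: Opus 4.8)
The plan is to push the compression $(\I,\Oo)\,(\cdot)\,(\I,\Oo)^{T}$ through the homogeneous symbol of $\mathcal{R}^{q}_{y_{m}}(\lambda)$ already recorded in (\ref{E:3.24}), via the identity (\ref{E:3.26}), matching homogeneous degrees. Write $\Pi(B) = (\I,\Oo)\,B\,(\I,\Oo)^{T}$ for the compression of a matrix $B$ on $\wedge^{q}T^{\ast}M$ to the tangential block $\wedge^{q}T^{\ast}Y$, so that $\widetilde{\alpha}_{j} = \Pi(\alpha_{j})$. The computation rests on two elementary facts: first, $\Pi$ commutes with $\partial_{\xi}$, $\partial_{y}$ and $\partial_{y_{m}}$ because $\I$ and $\Oo$ are constant; second, if a matrix $B$ is block-diagonal for the splitting $\wedge^{q}T^{\ast}M = \wedge^{q}T^{\ast}Y \oplus (e^{m}\wedge\wedge^{q-1}T^{\ast}Y)$, then $\Pi(BC) = \Pi(B)\Pi(C)$ and $\Pi(CB) = \Pi(C)\Pi(B)$ for every $C$. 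Inspecting (\ref{E:3.21}) and (\ref{E:3.24}), the matrices entering $\alpha_{1}$ and $\alpha_{0}$ are block-diagonal: $\alpha_{1} = \sqrt{|\xi|^{2}+\lambda}\,\Id$ and $A(y,y_{m})$ are scalar, and $\omega_{m}$ is block-diagonal by Lemma \ref{Lemma:3.2}. The only ingredient that couples the two blocks is $\omega_{\alpha}$ for $1\le\alpha\le m-1$, which enters only through $p_{1}$ and $p_{0}$.

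With these observations the degree-$1$ and degree-$0$ identities are immediate. Applying $\Pi$ to the first two lines of (\ref{E:3.24}) and using $\Pi(\alpha_{1}) = \sqrt{|\xi|^{2}+\lambda}\,\widetilde{\Id}$, $\Pi(p_{1}) = \widetilde{p}_{1}$ (definition (\ref{E:3.30})), $\Pi((A - 2\omega_{m})\alpha_{1}) = (\widetilde{A} - 2\widetilde{\omega}_{m})\widetilde{\alpha}_{1}$ (block-diagonality of $A - 2\omega_{m}$ together with scalarity of $\alpha_{1}$, and notation (\ref{E:3.29})--(\ref{E:3.30})), and the commutation of $\Pi$ with the derivatives, produces exactly the stated formulas for $\widetilde{\alpha}_{1}$ and $\widetilde{\alpha}_{0}$. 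For $\widetilde{\alpha}_{-1}$ one applies $\Pi$ to the third line of (\ref{E:3.24}); every term passes through with no further input: the $\sum_{|\omega|=2}$ term and the two mixed $\partial_{\xi}\cdot\partial_{y}$ terms because $\alpha_{1}$ is scalar; $\Pi(p_{0}) = \widetilde{p}_{0}$ by (\ref{E:3.30}); $\Pi((A - 2\omega_{m})\alpha_{0}) = (\widetilde{A} - 2\widetilde{\omega}_{m})\widetilde{\alpha}_{0}$ and $\Pi(\partial_{y_{m}}\omega_{m} + \omega_{m}\omega_{m} - A\omega_{m}) = \partial_{y_{m}}\widetilde{\omega}_{m} + \widetilde{\omega}_{m}\widetilde{\omega}_{m} - \widetilde{A}\widetilde{\omega}_{m}$ by block-diagonality. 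The sole exception is the term $\alpha_{0}^{2}$, which need not be block-diagonal; this is why $(\I,\Oo)\,\alpha_{0}^{2}\,(\I,\Oo)^{T}$ is kept as a separate entry, and evaluating it is the one non-routine point.

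To evaluate $(\I,\Oo)\,\alpha_{0}^{2}\,(\I,\Oo)^{T}$ at a point $(p,0)\in Y$, fix boundary normal coordinates centred at $y_{0} = p$, so that (\ref{E:3.1}), (\ref{E:3.2}) and Lemma \ref{Lemma:3.2} hold at the origin; since $p$ is arbitrary this establishes the identity at every point of $Y$. At $(y_{0},0)$ one reads off from (\ref{E:3.21}) and (\ref{E:3.24}) that $-\partial_{\xi}\alpha_{1}\cdot D_{y}\alpha_{1} = 0$ (tangential derivatives of $g^{\alpha\beta}$ vanish there), that $A = \big(\sum_{\alpha}\kappa_{\alpha}\big)\Id$, $\omega_{m}$, and $\partial_{y_{m}}\alpha_{1} = \tfrac{1}{\sqrt{|\xi|^{2}+\lambda}}\big(\sum_{\alpha}\kappa_{\alpha}\xi_{\alpha}^{2}\big)\Id$ are block-diagonal, and that the scalar part of $p_{1}$ vanishes while the remaining part $-2i\sum_{\alpha}\omega_{\alpha}\xi_{\alpha}$ is block-off-diagonal by Lemma \ref{Lemma:3.2}. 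Hence $\alpha_{0}(y_{0},0) = \alpha_{0}^{\mathrm{d}} + \alpha_{0}^{\mathrm{o}}$ with $\alpha_{0}^{\mathrm{d}}$ block-diagonal, $\Pi(\alpha_{0}^{\mathrm{d}}) = \widetilde{\alpha}_{0}(y_{0},0)$, and $\alpha_{0}^{\mathrm{o}} = -\frac{i}{\sqrt{|\xi|^{2}+\lambda}}\sum_{\alpha}\omega_{\alpha}\xi_{\alpha}$ block-off-diagonal. Squaring, the cross terms $\alpha_{0}^{\mathrm{d}}\alpha_{0}^{\mathrm{o}}$ and $\alpha_{0}^{\mathrm{o}}\alpha_{0}^{\mathrm{d}}$ are block-off-diagonal, hence killed by $\Pi$, while $(\alpha_{0}^{\mathrm{o}})^{2} = -\frac{1}{|\xi|^{2}+\lambda}\sum_{\alpha,\beta}\omega_{\alpha}\omega_{\beta}\xi_{\alpha}\xi_{\beta}$ is block-diagonal and contributes $\Pi\big((\alpha_{0}^{\mathrm{o}})^{2}\big) = -\frac{1}{|\xi|^{2}+\lambda}\sum_{\alpha,\beta}\widetilde{\omega_{\alpha}\omega_{\beta}}(y_{0},0)\,\xi_{\alpha}\xi_{\beta}$ in the notation (\ref{E:3.29}). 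Adding $\Pi\big((\alpha_{0}^{\mathrm{d}})^{2}\big) = \widetilde{\alpha}_{0}(y_{0},0)^{2}$ gives the claimed evaluation.

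The only genuine obstacle is this $\alpha_{0}^{2}$ term: unlike every other term in (\ref{E:3.24}) it couples the tangential and normal blocks, and one has to isolate the block-off-diagonal part of $\alpha_{0}$ -- carried entirely by the connection one-forms $\omega_{\alpha}$, $1\le\alpha\le m-1$ -- to see that its square feeds back a block-diagonal, curvature-type contribution $-\frac{1}{|\xi|^{2}+\lambda}\sum_{\alpha,\beta}\widetilde{\omega_{\alpha}\omega_{\beta}}\,\xi_{\alpha}\xi_{\beta}$ that is absent from $\widetilde{\alpha}_{0}^{2}$. Everything else is the bookkeeping of pushing $\Pi$ through the expansion (\ref{E:3.24}).
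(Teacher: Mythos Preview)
Your proposal is correct and follows essentially the same approach as the paper: compress the recursion (\ref{E:3.24}) for $\sigma(\mathcal{R}^{q}_{y_m}(\lambda))$ through (\ref{E:3.26}), using the notation (\ref{E:3.29})--(\ref{E:3.30}) and the block structure from Lemma~\ref{Lemma:3.2} (and the boundary-normal identities $g_{\alpha m}=0$, $g_{mm}=1$, which make $\omega_m$ block-diagonal throughout the chart), with the single nontrivial step being the evaluation of $(\I,\Oo)\,\alpha_0^2\,(\I,\Oo)^T$ at $Y$ via the decomposition of $\alpha_0$ into its block-diagonal part and the block-off-diagonal piece $-\frac{i}{\sqrt{|\xi|^2+\lambda}}\sum_\alpha\omega_\alpha\xi_\alpha$; this is exactly the content of the paper's Remark and (\ref{E:3.50}).
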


We next denote the homogeneous symbol of the resolvent $\left( \mu - Q^{q}_{\Abs}(\lambda) \right)^{-1}$ by

\begin{eqnarray}    \label{E:3.31}
\sigma \left( (\mu - Q^{q}_{\Abs}(\lambda))^{-1} \right)(y, \xi, \lambda, \mu) & \sim & \widetilde{r}_{-1}(y, \xi, \lambda, \mu) + \widetilde{r}_{-2}(y, \xi, \lambda, \mu) +
\widetilde{r}_{-3}(y, \xi, \lambda, \mu) + \cdots.
\end{eqnarray}

\noindent
Then,

\begin{eqnarray}    \label{E:3.32}
\widetilde{r}_{-1}(y, \xi, \lambda, \mu) & = & \left( \mu - \sqrt{|\xi|^2 + \lambda} \right)^{-1} \widetilde{\Id},  \\
\widetilde{r}_{-1-j}(y, \xi, \lambda, \mu) & = & \left( \mu - \sqrt{|\xi|^2 + \lambda} \right)^{-1} \sum_{k=0}^{j-1} \sum_{|\omega|+l+k=j} \frac{1}{\omega!}
\partial_{\xi}^{\omega} \widetilde{\alpha}_{1-l} D_{y}^{\omega} \widetilde{r}_{-1-k},    \nonumber
\end{eqnarray}

\noindent
which shows that the first three terms are given as follows.

\begin{eqnarray}    \label{E:3.33}
\widetilde{r}_{-1} & = & \left( \mu - \sqrt{|\xi|^2 + \lambda} \right)^{-1} \widetilde{\Id},  \\
\widetilde{r}_{-2} & = & \left( \mu - \sqrt{|\xi|^2 + \lambda} \right)^{-1}
\left\{ \partial_{\xi} \widetilde{\alpha}_{1} \cdot D_{y} \widetilde{r}_{-1} + \widetilde{\alpha}_{0} \cdot \widetilde{r}_{-1} \right\} , \nonumber  \\
\widetilde{r}_{-3} & = & \left( \mu - \sqrt{|\xi|^2 + \lambda} \right)^{-1} \Big\{ \sum_{|\omega|=2} \frac{1}{\omega !} \partial_{\xi}^{\omega} \widetilde{\alpha}_{1} \cdot D_{y}^{\omega} \widetilde{r}_{-1}
+ \partial_{\xi} \widetilde{\alpha}_{1} \cdot D_{y} \widetilde{r}_{-2} + \partial_{\xi} \widetilde{\alpha}_{0} \cdot D_{y} \widetilde{r}_{-1}  + \widetilde{\alpha}_{0} \cdot \widetilde{r}_{-2}
+ \widetilde{\alpha}_{-1} \cdot \widetilde{r}_{-1} \Big\}.  \nonumber
\end{eqnarray}

\vspace{0.3 cm}

\section{The constant term $a_{0}$ in $2$ and $3$ dimensional manifolds}

\vspace{0.3 cm}

In this section we are going to compute $a_{0}$ in Theorem \ref{Theorem:2.4} in terms of curvature tensors on $Y$ when $\Dim Y = 1$ and $2$. By Lemma \ref{Lemma:2.3} with (\ref{E:2.10}), $a_{0}(y)$ is expressed by

\begin{eqnarray}    \label{E:4.1}
a_{0}(y) & = & \frac{\partial}{\partial s}\bigg|_{s=0} \left( \frac{1}{(2 \pi)^{m-1}} \int_{T_{y}^{\ast}Y}
\frac{1}{2 \pi i} \int_{\gamma} \mu^{-s} \Tr \widetilde{r}_{-m} \left(y, \xi, \frac{\lambda}{|\lambda|}, \mu\right) d\mu d \xi \right).
\end{eqnarray}

\noindent
Let $\nabla^{Y}$ be the Levi-Civita connection on $Y$ associated to the induced metric from $g$.
We denote by $R_{\alpha\beta\gamma\delta}$ and $\Ric_{\alpha\beta}$
the Riemann curvature tensor and Ricci tensor on $Y$ associated to $\nabla^{Y}$ defined by

\begin{eqnarray}     \label{E:4.100}
R_{\alpha\beta\gamma\delta} & = & \big\langle \nabla^{Y}_{\partial_{x_{\alpha}}} \nabla^{Y}_{\partial_{x_{\beta}}} \partial_{x_{\gamma}} -
\nabla^{Y}_{\partial_{x_{\beta}}} \nabla^{Y}_{\partial_{x_{\alpha}}} \partial_{x_{\gamma}} -
\nabla^{Y}_{[\partial_{x_{\alpha}}, \partial_{x_{\beta}}]} \partial_{x_{\gamma}}, ~ \partial_{x_{\delta}} \big\rangle_{Y},   \quad
\Ric_{\alpha\beta} = \sum_{\gamma=1}^{m-1} R_{\alpha\gamma\gamma\beta}.
\end{eqnarray}

\noindent
The following lemma is shown in \cite{PS} and \cite{Vi}.

\begin{lemma}   \label{Lemma:4.1}
We consider the boundary normal coordinate system on an open neighborhood $U_{\epsilon_{0}}$ of $y_{0} \in Y$ with metric tensor $g = (g_{ij})$
and $y_{0} = (0, \cdots, 0)$. Then, we have the following equalities:
\begin{eqnarray*}
& (1) & g^{\alpha\beta;\alpha\beta}(y_{0}) = - ~ \frac{1}{3} R_{\alpha\beta\beta\alpha}(y_{0}), \qquad
 g^{\alpha\alpha; \beta\beta}(y_{0}) = ~ \frac{2}{3} R_{\alpha\beta\beta\alpha}(y_{0}),  \\
& (2) & \partial_{y_{\alpha}} \partial_{y_{\alpha}} \ln \big| g \big|(y_{0}) = - \frac{2}{3} \Ric_{\alpha\alpha}(y_{0}), \qquad
\tau_{Y}(y_{0}) = \sum_{\alpha, \beta=1}^{m-1} R_{\alpha\beta\beta\alpha}(y_{0}) = - \sum_{\alpha, \gamma=1}^{m-1} R_{\alpha\beta\alpha\beta}(y_{0}), \\
& (3) & g^{\alpha\beta;m}(y_{0}) = 2 \kappa_{\alpha} \delta_{\alpha\beta} = - g_{\alpha\beta;m}(y_{0}),  \quad
 \int_{{\mathbb R}^{m-1}} |\xi|^{k} \sum_{\alpha ,\beta ,\gamma ,\epsilon =1}^{m-1} g^{\alpha\beta;\gamma\epsilon} \xi_{\alpha} \xi_{\beta} \xi_{\gamma} \xi_{\epsilon} d\xi ~ = ~ 0 ~~ \text{for} ~~ k < -3-m, \\
& (4) & \sum_{\alpha=1}^{m-1} g^{\alpha\alpha;m m}(y_{0}) = 8 \sum_{\alpha=1}^{m-1} \kappa_{\alpha}^{2}(y_{0}) -
\sum_{\alpha=1}^{m-1} g_{\alpha\alpha;m m}(y_{0}),  \\
& (5) & \sum_{\alpha=1}^{m-1} g_{\alpha\alpha;m m}(y_{0}) = - ~ \left\{ \tau_{M}(y_{0}) - \tau_{Y}(y_{0}) - 2 (m-1)^{2} H_{1}^{2}(y_{0}) + 3 (m-1)(m-2) H_{2}(y_{0}) \right\},
\end{eqnarray*}
where $\tau_{M}(y_{0})$ and $\tau_{Y}(y_{0})$ are scalar curvatures of $M$ and $Y$ at $y_{0} \in Y$, respectively, and $H_{1}$ and $H_{2}$ are defined in (\ref{E:2.32}).
\end{lemma}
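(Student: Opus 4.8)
The plan is to derive every statement from the second-order Taylor expansion of $g$ in the boundary normal coordinate system, organized around two inputs: the tangential expansion of the induced metric on $Y$ in geodesic normal coordinates, and the radial (Riccati) expansion of $g$ in the $y_{m}$-direction, supplemented by the Gauss equation relating the curvatures of $M$, $Y$ and the principal curvatures of $Y$.

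For (1) and (2) I would start from the classical expansion $g_{\alpha\beta}(y)=\delta_{\alpha\beta}+c\,R_{\alpha\gamma\beta\delta}(y_{0})\,y_{\gamma}y_{\delta}+O(|y|^{3})$ on $Y$, with $c=\pm\tfrac13$ fixed by the curvature convention (\ref{E:4.100}), whose inverse has the opposite-sign coefficient. Differentiating the inverse expansion twice at $y_{0}$ expresses $g^{\alpha\beta;\gamma\epsilon}(y_{0})$ as a symmetrization of $R$; setting $\gamma=\alpha$, $\epsilon=\beta$ (no summation) and using $R_{\alpha\alpha\beta\beta}=0$ together with the (anti)symmetries of $R$ yields the two identities of (1). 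Taking determinants of the tangential expansion gives $|g|(y)=1-\tfrac13\Ric_{\gamma\delta}(y_{0})y_{\gamma}y_{\delta}+O(|y|^{3})$, hence $\partial_{y_{\alpha}}^{2}\ln|g|(y_{0})=-\tfrac23\Ric_{\alpha\alpha}(y_{0})$; the two formulas for $\tau_{Y}$ are immediate from $\Ric_{\alpha\beta}=\sum_{\gamma}R_{\alpha\gamma\gamma\beta}$ and the antisymmetry of $R$ in its last two indices.

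For (3): the first equality $g^{\alpha\beta;m}(y_{0})=2\kappa_{\alpha}\delta_{\alpha\beta}=-g_{\alpha\beta;m}(y_{0})$ is just (\ref{E:3.2}) combined with $g^{\alpha\gamma}(y_{0})=\delta_{\alpha\gamma}$, so there is nothing new there. For the integral, by the expansion above $\sum_{\alpha\beta\gamma\epsilon}g^{\alpha\beta;\gamma\epsilon}(y_{0})\xi_{\alpha}\xi_{\beta}\xi_{\gamma}\xi_{\epsilon}$ is a fixed multiple of $\sum R_{\alpha\gamma\beta\epsilon}(y_{0})\xi_{\alpha}\xi_{\gamma}\xi_{\beta}\xi_{\epsilon}$, which vanishes identically in $\xi$ because $R_{\alpha\gamma\beta\epsilon}$ is antisymmetric in $\alpha\leftrightarrow\gamma$ while $\xi_{\alpha}\xi_{\gamma}$ is symmetric (equivalently, the totally symmetric part of $R$ vanishes by the first Bianchi identity); the restriction $k<-3-m$ only ensures the $\xi$-integral is absolutely convergent, so that this pointwise vanishing passes to the integral. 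Item (4) is then obtained by differentiating $g^{\alpha\beta}g_{\beta\gamma}=\delta^{\alpha}_{\gamma}$ twice in $y_{m}$, evaluating at $y_{0}$, contracting $\alpha=\gamma$ and summing, and inserting $g^{\alpha\beta;m}(y_{0})=2\kappa_{\alpha}\delta_{\alpha\beta}$ and $g_{\alpha\beta;m}(y_{0})=-2\kappa_{\alpha}\delta_{\alpha\beta}$; the middle cross term $2g^{\alpha\beta;m}g_{\beta\alpha;m}$ produces $-8\sum_{\alpha}\kappa_{\alpha}^{2}$, which is exactly the term appearing in (4).

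Item (5) is where the real work lies, and I expect the main obstacle to be keeping the signs consistent. The plan is to use the Riccati equation for the shape operators $S(u)$ of the level hypersurfaces $Y_{u}$ along the normal geodesics: writing $\partial_{y_{m}}g_{\alpha\beta}=2L_{\alpha\beta}$, $L=g\,S$, $L_{\alpha\beta}(y_{0})=-\kappa_{\alpha}\delta_{\alpha\beta}$, one has $\nabla_{\partial_{y_{m}}}S+S^{2}+\mathcal{J}=0$ with $\mathcal{J}$ the normal Jacobi operator $X\mapsto R^{M}(X,\partial_{y_{m}})\partial_{y_{m}}$, whose trace reads $\partial_{y_{m}}(\Tr S)+\Tr(S^{2})+\Ric^{M}_{mm}=0$. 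Differentiating $\sum_{\alpha}g_{\alpha\alpha}$ twice in $y_{m}$ and feeding in this traced Riccati identity at $y_{0}$ (using $\Tr\mathcal{J}=\sum_{\alpha}R^{M}_{\alpha m m\alpha}=\Ric^{M}_{mm}$) gives $\sum_{\alpha}g_{\alpha\alpha;mm}(y_{0})=2\sum_{\alpha}\kappa_{\alpha}^{2}(y_{0})-2\Ric^{M}_{mm}(y_{0})$. It then remains to eliminate $\Ric^{M}_{mm}$ via the Gauss equation $2\Ric^{M}_{mm}=\tau_{M}-\tau_{Y}+(m-1)^{2}H_{1}^{2}-\sum_{\alpha}\kappa_{\alpha}^{2}$ (which one checks against (\ref{E:2.33}) when $m=3$) and to rewrite $\sum_{\alpha}\kappa_{\alpha}^{2}=(m-1)^{2}H_{1}^{2}-(m-1)(m-2)H_{2}$ by the Newton identity together with (\ref{E:2.31}); substituting both and simplifying yields exactly $-\{\tau_{M}-\tau_{Y}-2(m-1)^{2}H_{1}^{2}+3(m-1)(m-2)H_{2}\}$. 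I would pin down the signs in the Riccati and Gauss equations once, by comparison with the $m=3$ identity (\ref{E:2.33}), after which the remainder is routine algebra.
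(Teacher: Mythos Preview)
Your plan is correct. Note, however, that the paper does not actually prove Lemma~\ref{Lemma:4.1}: the sentence immediately preceding the lemma reads ``The following lemma is shown in \cite{PS} and \cite{Vi},'' and no argument is given in the paper itself. So there is no in-paper proof to compare against.

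That said, your outline is exactly the standard route one would expect to find in those references. Items (1)--(2) come from the second-order Taylor expansion of the induced metric in geodesic normal coordinates on $Y$; item (3) is a restatement of (\ref{E:3.2}) together with the observation that the totally symmetric contraction of the Riemann tensor vanishes; item (4) follows by differentiating $g^{\alpha\beta}g_{\beta\gamma}=\delta^{\alpha}_{\gamma}$ twice in $y_{m}$; and item (5) is obtained from the traced Riccati equation along the normal geodesics combined with the Gauss equation, with the Newton identity $\sum_{\alpha}\kappa_{\alpha}^{2}=(m-1)^{2}H_{1}^{2}-(m-1)(m-2)H_{2}$ used to rewrite the result in terms of $H_{1}$ and $H_{2}$. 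Your consistency check against (\ref{E:2.33}) in the case $m=3$ is a sensible way to fix the signs, and the algebra you sketch for (5) closes correctly.
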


\noindent
The following lemma is straightforward.

\begin{lemma}   \label{Lemma:4.2}
Let ${\mathbb C}^+ = {\mathbb C} -\{ r\in {\mathbb R} \mid r\leq 0\}$. For $z\in {\mathbb C}^+$ let $\gamma$ be a counterclockwise contour in ${\mathbb C}^+$ with
$z$ inside $\gamma$.
Then for $\re s > 2$ the following integrals are all well defined and one computes:
\begin{eqnarray*}
& & \frac{1}{2 \pi i} \int_{\gamma} \frac{\mu^{-s}}{\mu - z} d\mu = z^{-s}, \quad
\frac{1}{2 \pi i} \int_{\gamma} \frac{\mu^{-s}}{(\mu - z)^{2}} d\mu = - s z^{-s-1}, \quad
\frac{1}{2 \pi i} \int_{\gamma} \frac{\mu^{-s}}{(\mu - z)^{3}} d\mu = \frac{1}{2} s (s + 1) z^{-s-2},  \\
& & \frac{1}{4 \pi^{2}} \int_{{\mathbb R}^{2}} ( |\xi|^{2} + 1)^{-\frac{s}{2}} d \xi ~ = ~ \frac{1}{2 \pi} \frac{1}{s-2},  \qquad
 \frac{1}{4 \pi^{2}} \int_{{\mathbb R}^{2}} ( |\xi|^{2} + 1)^{-\frac{s}{2} -1} d \xi ~ = ~ \frac{1}{2 \pi} \frac{1}{s},  \\
& & \frac{1}{4 \pi^{2}} \int_{{\mathbb R}^{2}} \xi_{1}^{2} ( |\xi|^{2} + 1)^{-\frac{s}{2} - 2} d \xi ~ = ~ \frac{1}{2 \pi} \frac{1}{s (s + 2)},  \\
& & \frac{1}{4 \pi^{2}} \int_{{\mathbb R}^{2}} \xi_{1}^{2} \xi_{2}^{2} ( |\xi|^{2} + 1)^{-\frac{s}{2} - 3} d \xi ~ = ~ \frac{1}{2 \pi} \frac{1}{s (s + 2) (s + 4)},  \\
& & \frac{1}{4 \pi^{2}} \int_{{\mathbb R}^{2}} \xi_{1}^{4} ( |\xi|^{2} + 1)^{-\frac{s}{2} - 3} d \xi ~ = ~ \frac{3}{2 \pi} \frac{1}{s (s + 2)(s + 4)}.
\end{eqnarray*}
\end{lemma}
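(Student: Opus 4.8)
The plan is to split the eight identities into two independent groups: the three contour integrals, which follow at once from the Cauchy integral formula, and the five integrals over $\mathbb{R}^{2}$, which reduce to Beta-function integrals after passing to polar coordinates. For the first group, note that since $\gamma$ lies in $\mathbb{C}^{+}$ and $z$ is inside $\gamma$, the map $\mu\mapsto\mu^{-s}$ (principal branch) is holomorphic on and inside $\gamma$, so each of the three integrals is finite for every $s$. The Cauchy integral formula for derivatives gives, for $n=0,1,2$,
\[
\frac{1}{2\pi i}\int_{\gamma}\frac{\mu^{-s}\,d\mu}{(\mu-z)^{n+1}} \;=\; \frac{1}{n!}\left.\frac{d^{n}}{d\mu^{n}}\mu^{-s}\right|_{\mu=z},
\]
and since $\tfrac{d}{d\mu}\mu^{-s}=-s\mu^{-s-1}$ and $\tfrac{d^{2}}{d\mu^{2}}\mu^{-s}=s(s+1)\mu^{-s-2}$, evaluating at $\mu=z$ produces $z^{-s}$, $-sz^{-s-1}$, and $\tfrac12 s(s+1)z^{-s-2}$, which is precisely the first line of the lemma.

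For the second group I would pass to polar coordinates $\xi=(r\cos\theta,r\sin\theta)$, $d\xi=r\,dr\,d\theta$. The angular integrals that occur are $\int_{0}^{2\pi}d\theta=2\pi$, $\int_{0}^{2\pi}\cos^{2}\theta\,d\theta=\pi$, $\int_{0}^{2\pi}\cos^{2}\theta\sin^{2}\theta\,d\theta=\tfrac{\pi}{4}$, and $\int_{0}^{2\pi}\cos^{4}\theta\,d\theta=\tfrac{3\pi}{4}$, needed in (4)--(5), (6), (7), and (8) respectively. The radial integrals all have the shape $\int_{0}^{\infty}r^{2k+1}(r^{2}+1)^{-p}\,dr$; the substitution $t=r^{2}$ turns this into $\tfrac12\int_{0}^{\infty}t^{k}(1+t)^{-p}\,dt=\tfrac12 B(k+1,p-k-1)=\tfrac12\,\Gamma(k+1)\Gamma(p-k-1)/\Gamma(p)$, which is finite precisely when $\re(p-k-1)>0$. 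Taking $(k,p)$ equal to $(0,\tfrac s2)$, $(0,\tfrac s2+1)$, $(1,\tfrac s2+2)$, $(2,\tfrac s2+3)$, $(2,\tfrac s2+3)$ for (4)--(8), multiplying by the corresponding angular factor and the prefactor $\tfrac{1}{4\pi^{2}}$, and simplifying the ratios of Gamma functions via repeated use of $\Gamma(z+1)=z\Gamma(z)$, yields the five stated values.

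I do not anticipate a genuine obstacle; the argument is elementary throughout. The one point that merits attention is the domain of convergence: the hypothesis $\re s>2$ --- rather than merely $\re s>0$, which would already suffice for (5) and (6)--(8) --- is exactly what is needed for the slowest-decaying integrand, $(|\xi|^{2}+1)^{-s/2}\sim|\xi|^{-s}$ in (4), to be integrable over $\mathbb{R}^{2}$, which is why the lemma is phrased with $\re s>2$.
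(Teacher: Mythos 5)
Your proof is correct and is exactly the standard computation the paper omits (it states the lemma as ``straightforward'' without proof): Cauchy's integral formula for derivatives handles the three contour integrals, and polar coordinates plus the Beta integral $\int_0^\infty t^{a-1}(1+t)^{-a-b}\,dt=\Gamma(a)\Gamma(b)/\Gamma(a+b)$ handle the five integrals over ${\mathbb R}^2$; all eight values check out, and your remark that $\re s>2$ is dictated by the slowest-decaying integrand in the fourth identity is the right observation about the hypothesis.
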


Now we proceed the computation as in \cite{KL4}.
For two integrable functions $f(\xi)$ and $g(\xi)$ on ${\mathbb R}^{m-1}$, we define an equivalence relation
"$~ \approx ~$" as follows:

\begin{eqnarray}   \label{E:4.2}
f & \approx & g  \qquad \text{if and only if} \qquad \int_{{\mathbb R}^{m-1}} f(\xi) ~ d \xi ~ = ~ \int_{{\mathbb R}^{m-1}} g(\xi) ~ d \xi.
\end{eqnarray}

We first suppose that $Y$ is a $1$-dimensional manifold, {\it i.e.} $m = 2$. Using (\ref{E:3.1}),
we have, at $(y, 0) \in Y$,

\begin{eqnarray}   \label{E:4.3}
\widetilde{r}_{-2} & = &  \left( \mu - \sqrt{|\xi|^2 + \lambda} \right)^{-1}
\bigg\{ \partial_{\xi} \widetilde{\alpha}_{1} \cdot D_{y} \widetilde{r}_{-1} + \widetilde{\alpha}_{0} \cdot
\widetilde{r}_{-1} \bigg\} ~ \approx ~
 \left( \mu - \sqrt{|\xi|^2 + \lambda} \right)^{-2} \cdot \widetilde{\alpha}_{0}  \\
& = & \frac{1}{( \mu - \sqrt{|\xi|^2 + \lambda})^{2}}  \bigg\{ \frac{- \partial_{\xi} \widetilde{\alpha}_{1} \cdot D_{y} \widetilde{\alpha}_{1} + \widetilde{p}_{1}}{2 \sqrt{|\xi|^2 + \lambda}} +
( \widetilde{\omega}_{m} - \frac{1}{2} \widetilde{A}(y, 0)) + \frac{\partial_{y_{m}} \widetilde{\alpha}_{1}}{2 \sqrt{|\xi|^2 + \lambda}} \bigg\}   \nonumber  \\
& \approx & \frac{1}{( \mu - \sqrt{|\xi|^2 + \lambda})^{2}} \bigg\{
( \widetilde{\omega}_{m} - \frac{1}{2} \widetilde{A}(y, 0) + \frac{\partial_{y_{m}} \widetilde{\alpha}_{1}}{2 \sqrt{|\xi|^2 + \lambda}} \bigg\}  \nonumber \\
& \approx & \frac{1}{( \mu - \sqrt{|\xi|^2 + \lambda})^{2}}
 \bigg\{ \widetilde{\omega}_{m} - \frac{\kappa}{2} \cdot \frac{\lambda}{|\xi|^2 + \lambda} ~ \widetilde{\Id} \bigg\} , \nonumber
\end{eqnarray}

\noindent
where $\kappa(y)$ is the principal curvature on $y \in Y$.
Hence,

\begin{eqnarray}   \label{E:4.4}
&  &  \frac{1}{2 \pi} \int_{T_{y}^{\ast}Y}
\frac{1}{2 \pi i} \int_{\gamma} \mu^{-s} \Tr \widetilde{r}_{-2} \left(y, \xi, \frac{\lambda}{|\lambda|}, \mu\right) d\mu d \xi  \\
& = & \frac{1}{2 \pi} \int_{-\infty}^{\infty} (-s) \Tr \widetilde{\omega}_{m} \frac{1}{\sqrt{\xi^{2} + 1}^{s+1}} d \xi
+ s  \cdot \frac{\kappa}{2} \cdot \frac{1}{2 \pi} \int_{-\infty}^{\infty} \frac{1}{\sqrt{\xi^{2} + 1}^{s+3}} d \xi  \nonumber  \\
& = & - s \cdot \Tr \widetilde{\omega}_{m} \cdot \frac{1}{\pi} \int_{0}^{\infty}  \frac{1}{\sqrt{\xi^{2} + 1}^{s+1}} d \xi ~ + ~ s \cdot \left( \frac{\kappa}{2 \pi} + O(s) \right).   \nonumber
\end{eqnarray}

\noindent
Setting $\xi^{2} = t$ and using the identity
$\int_{0}^{\infty} \frac{t^{a-1}}{(1 + t)^{a+b}} dt = \frac{\Gamma(a) \Gamma(b)}{\Gamma(a+b)}$
\cite{AAR,MOS}, we obtain

\begin{eqnarray}    \label{E:4.5}
 \frac{1}{\pi} \int_{0}^{\infty}  \frac{1}{\sqrt{\xi^{2} + 1}^{s+1}} d \xi & = &
\frac{1}{2 \pi} \int_{0}^{\infty} \frac{t^{-\frac{1}{2}}}{(t+1)^{\frac{s+1}{2}}} dt
~ = ~ \frac{1}{s \cdot \pi}
\frac{\Gamma \left(\frac{1}{2}\right) \Gamma \left( \frac{s}{2} + 1 \right)}{\Gamma \left( \frac{s+1}{2} \right)},
 \nonumber
\end{eqnarray}

\noindent
which leads to

\begin{eqnarray}    \label{E:4.6}
& & \frac{1}{2 \pi} \int_{T_{y}^{\ast}Y}
\frac{1}{2 \pi i} \int_{\gamma} \mu^{-s} \Tr \widetilde{r}_{-2} \left(y, \xi, \frac{\lambda}{|\lambda|}, \mu\right) d\mu d \xi    \\
& = &  - \frac{1}{\pi} \cdot \Tr \widetilde{\omega}_{m} \cdot \frac{\Gamma \left(\frac{1}{2}\right) \Gamma \left( \frac{s}{2} + 1 \right)}{\Gamma \left( \frac{s+1}{2} \right)}  ~ + ~ s \cdot \left( \frac{\kappa}{2 \pi} + O(s) \right).
\nonumber
\end{eqnarray}

\noindent
Taking the derivative with respect to $s$ gives

\begin{eqnarray}   \label{E:4.7}
 a_{0}(y) & = &  - \frac{1}{\pi} \Tr ( \widetilde{\omega}_{m} ) \ln 2 +  \frac{\kappa(y)}{2\pi}.
\end{eqnarray}

\noindent
Lemma \ref{Lemma:3.2} shows that if $q = 0$ then $\widetilde{\omega}_{m} = 0$ and if $q = 1$ then $\widetilde{\omega}_{m} = \kappa(y)$. This leads to the following result.

\begin{theorem}   \label{Theorem:4.3}
When $\Dim Y = 1$, the constant $a_{0}$ in Theorem \ref{Theorem:2.4} is given as follow.
\begin{eqnarray*}
a_{0} & = & \begin{cases} \frac{1}{2 \pi} \int_{Y} \kappa(y) ~ dy &  \quad \text{for} \quad q = 0 \\
\frac{1}{2\pi} (1 - 2 \ln 2) \int_{Y} \kappa(y) ~ dy & \quad \text{for} \quad q=1 . \end{cases}
\end{eqnarray*}
\end{theorem}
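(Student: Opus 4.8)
The plan is to read off $a_0$ directly from the pointwise formula \eqref{E:4.7}, combined with the description of the compressed connection form $\widetilde{\omega}_m$ supplied by Lemma \ref{Lemma:3.2}. Recall that by Lemma \ref{Lemma:2.3} together with \eqref{E:2.10} one has $a_0 = \int_Y a_0(y)\,dy$, where (since $\Dim Y = 1$, so $\widetilde r_{-m}=\widetilde r_{-2}$) the density has already been computed in \eqref{E:4.3}--\eqref{E:4.7}: at every $y\in Y$, viewed as the origin $y_0$ of a boundary normal coordinate system,
\[
a_0(y) \;=\; -\,\frac{1}{\pi}\,\Tr\!\big(\widetilde{\omega}_m\big)\,\ln 2 \;+\; \frac{\kappa(y)}{2\pi}.
\]
So all that remains is to evaluate $\Tr\widetilde{\omega}_m$ in the two cases $q=0$ and $q=1$ and then integrate over $Y$.

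First I would specialize $\widetilde{\omega}_m=(\I,\Oo)\,\omega_m\,(\I,\Oo)^T$ to $\Dim M = 2$. When $q=0$, the operator acts on functions, $\wedge^0 T^\ast M$ is the trivial line bundle, and the Levi--Civita connection acts on it as zero; hence $\widetilde{\omega}_m = 0$ and the first term of the density drops out, leaving $a_0(y) = \kappa(y)/(2\pi)$. When $q=1$, the relevant tangential multi-index in Lemma \ref{Lemma:3.2} is the single index attached to the one-dimensional $Y$, and the third identity there gives $\omega_m(e^{1}) = \kappa\, e^{1}$ while $\omega_m(e^m)=0$; compressing to tangential $1$-forms shows $\widetilde{\omega}_m$ is the $1\times 1$ matrix $(\kappa(y))$, so $\Tr\widetilde{\omega}_m = \kappa(y)$. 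Substituting gives
\[
a_0(y) \;=\; -\frac{\kappa(y)}{\pi}\ln 2 + \frac{\kappa(y)}{2\pi} \;=\; \frac{1-2\ln 2}{2\pi}\,\kappa(y).
\]

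Finally, integrating the two densities over $Y$ yields $a_0 = \frac{1}{2\pi}\int_Y\kappa(y)\,dy$ for $q=0$ and $a_0 = \frac{1-2\ln 2}{2\pi}\int_Y\kappa(y)\,dy$ for $q=1$, which is the asserted formula.

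There is no serious obstacle left: all of the analytic input has been assembled before the statement, namely the Gelfand/Riccati derivation of $\sigma(Q^q_{\Abs,y_m}(\lambda))$ in Section 3, the $\approx$-reductions in \eqref{E:4.3} that discard the terms built from $D_y\widetilde{\alpha}_1$ and $\widetilde{p}_1$ (they are odd in $\xi$ or vanish at the chosen origin by \eqref{E:3.1}), and the evaluation of the $\mu$-contour integral through Lemma \ref{Lemma:4.2} followed by the Beta-function integral giving \eqref{E:4.6}--\eqref{E:4.7}. The only point demanding a moment's care is the matrix bookkeeping: one must confirm that the compression $\widetilde{\omega}_m$ of $\omega_m$ to the tangential $q$-forms really reduces, for $m=2$, to the scalars $0$ (for $q=0$) and $\kappa$ (for $q=1$), and this is precisely what Lemma \ref{Lemma:3.2} delivers.
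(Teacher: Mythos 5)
Your proposal is correct and follows essentially the same route as the paper: the density formula $a_{0}(y) = -\frac{1}{\pi}\Tr(\widetilde{\omega}_{m})\ln 2 + \frac{\kappa(y)}{2\pi}$ obtained in (\ref{E:4.3})--(\ref{E:4.7}) is exactly the paper's derivation, and the final step is the same case analysis of $\widetilde{\omega}_{m}$ via Lemma \ref{Lemma:3.2} (namely $\widetilde{\omega}_{m}=0$ for $q=0$ and $\widetilde{\omega}_{m}=(\kappa(y))$ for $q=1$), followed by integration over $Y$. No gaps.
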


\vspace{0.2 cm}
\noindent
{\it Remark} : It follows from (\ref{E:2.10}) that

\begin{eqnarray}   \label{E:4.8}
q_{1} & = & \begin{cases} 0 & \quad \text{for} \quad q = 0 \\  - \frac{1}{2 \pi} \int_{Y} \kappa(y) ~ dy
& \quad \text{for} \quad q=1 , \end{cases}
\end{eqnarray}

\noindent
which agrees with ${\frak a}_{2} - {\frak b}_{2}$ in Corollary \ref{Corollary:2.9} and Corollary \ref{Corollary:2.10}.

\vspace{0.3 cm}

We next consider the case that $Y$ is a $2$-dimensional compact Riemannian manifold, {\it i.e.} $m = 3$. We refer to \cite{KL4} for details. Before computing $a_{0}(y)$, we first consider $a_{1}(y)$, which is by Lemma \ref{Lemma:2.3}

\begin{eqnarray}    \label{E:4.9}
a_{1}(y) & = & \left( {\frak a}_{1}(y) - {\frak b}_{1}(y) \right) - \pi_{0}(y).
\end{eqnarray}

\noindent
Simple computation shows that for ${\frak r}_{0} = \binom{2}{q}$,

\begin{eqnarray}    \label{E:4.10}
\pi_{0}(y) & = & - \frac{\partial}{\partial s}\bigg|_{s=0} \left( \frac{1}{(2 \pi)^{2}} \int_{T_{y}^{\ast}Y}
\frac{1}{2 \pi i} \int_{\gamma} \mu^{-s} \Tr \widetilde{r}_{-1} \left(y, \xi, \frac{\lambda}{|\lambda|}, \mu\right) ~ d\mu d \xi \right)  \\
& = & - {\frak r}_{0} ~ \frac{\partial}{\partial s}\bigg|_{s=0} \left( \frac{1}{(2 \pi)^{2}} \int_{T_{y}^{\ast}Y}
\frac{1}{2 \pi i} \int_{\gamma} \frac{\mu^{-s}}{\mu - \sqrt{|\xi|^{2} + 1}} ~ d\mu d \xi \right)  \nonumber \\
& = & \frac{{\frak r}_{0}}{8 \pi}.  \nonumber
\end{eqnarray}

\noindent
It is well known (for example, Theorem 3.4.1 and Theorem 3.6.1 in \cite{Gi3} or Section 4.2 and 4.5 in \cite{Ki}) that

\begin{eqnarray}    \label{E:4.11}
{\frak a}_{1}(y) - {\frak b}_{1}(y) & = & \frac{{\frak r}_{0}}{8 \pi},
\end{eqnarray}

\noindent
which yields the following result.

\begin{lemma}   \label{Lemma:4.4}
When $\Dim Y = 2$, the constant $a_{1}$ in Lemma \ref{Lemma:2.2} is zero.
\end{lemma}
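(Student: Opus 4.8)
The plan is to prove the pointwise statement $a_1(y)\equiv 0$ on $Y$; since $a_1=\int_Y a_1(y)\,dy$ by Lemma~\ref{Lemma:2.3}, this gives $a_1=0$. First I would specialize Lemma~\ref{Lemma:2.3} to $m=3$: the only index with $1\le k\le[(m-1)/2]$ is $k=1$, and since $\Gamma(s-1)/\Gamma(s)=1/(s-1)$ one has $\frac{d}{ds}\big(\Gamma(s-1)/\Gamma(s)\big)\big|_{s=0}=-1$, so Lemma~\ref{Lemma:2.3} reduces to
\[
a_1(y)=\big({\frak a}_1(y)-{\frak b}_1(y)\big)-\pi_0(y),
\]
which is \eqref{E:4.9}. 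So the whole matter comes down to computing the two local densities $\pi_0(y)$ and ${\frak a}_1(y)-{\frak b}_1(y)$ and checking that they coincide.

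Next I would compute $\pi_0(y)$ from \eqref{E:2.10} with $j=0$. At this order only the leading resolvent symbol $\widetilde{r}_{-1}(y,\xi,\lambda,\mu)=(\mu-\sqrt{|\xi|^2+\lambda})^{-1}\widetilde{\Id}$ appears, and on the slice $\lambda/|\lambda|=1$ we have $\Tr\widetilde{r}_{-1}={\frak r}_0\,(\mu-\sqrt{|\xi|^2+1})^{-1}$ with ${\frak r}_0=\binom{2}{q}$. The first identity of Lemma~\ref{Lemma:4.2} evaluates the $\mu$-contour integral to $(|\xi|^2+1)^{-s/2}$, and the momentum integral $\frac{1}{4\pi^2}\int_{{\mathbb R}^2}(|\xi|^2+1)^{-s/2}\,d\xi=\frac{1}{2\pi}\frac{1}{s-2}$ (again Lemma~\ref{Lemma:4.2}) reduces $\pi_0(y)$ to $-{\frak r}_0\,\frac{\partial}{\partial s}\big|_{s=0}\big(\frac{1}{2\pi}\frac{1}{s-2}\big)={\frak r}_0/(8\pi)$, which is \eqref{E:4.10}. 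For the heat side I would simply quote the standard heat-trace coefficients of the $q$-form Laplacian under absolute and Dirichlet conditions (Theorems~3.4.1 and~3.6.1 of \cite{Gi3}, or \S\S4.2, 4.5 of \cite{Ki}): the $t^{-1}$ boundary densities of $\Delta^q_{M,\Abs}$ and $\Delta^q_{M,\Dir}$ have opposite signs and equal curvature-independent leading parts, so ${\frak a}_1(y)-{\frak b}_1(y)={\frak r}_0/(8\pi)$, which is \eqref{E:4.11}. Substituting both densities into the displayed identity for $a_1(y)$ yields $a_1(y)\equiv 0$, hence $a_1=0$.

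The computation itself is short; the one point requiring care is matching normalization conventions — that the $\Psi$DO density formula \eqref{E:2.10} and the heat coefficients ${\frak a}_1,{\frak b}_1$ are written in the same convention, and that compressing the full symbol to its tangential $\binom{m-1}{q}\times\binom{m-1}{q}$ block via $(\I,\Oo)(\cdot)(\I,\Oo)^{T}$ in Section~3 does not hide an extra multiplicity factor. Once these are pinned down, both densities are literally ${\frak r}_0/(8\pi)$, the cancellation is exact and pointwise, and no curvature term survives — which is exactly why $a_1$, unlike $a_0$, vanishes identically when $\Dim M=3$, independently of the geometry of $Y$ inside $M$.
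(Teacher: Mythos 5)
Your proposal is correct and follows essentially the same route as the paper: specializing Lemma \ref{Lemma:2.3} to $m=3$ (with $\frac{d}{ds}(\Gamma(s-1)/\Gamma(s))|_{s=0}=-1$) to get $a_{1}(y)=({\frak a}_{1}(y)-{\frak b}_{1}(y))-\pi_{0}(y)$, computing $\pi_{0}(y)={\frak r}_{0}/(8\pi)$ from $\widetilde{r}_{-1}$ via Lemma \ref{Lemma:4.2}, and matching it against the standard heat-trace density ${\frak a}_{1}(y)-{\frak b}_{1}(y)={\frak r}_{0}/(8\pi)$ quoted from \cite{Gi3,Ki}. This is exactly the argument in (\ref{E:4.9})--(\ref{E:4.11}), including the pointwise cancellation of the densities.
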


We next compute $a_{0}(x)$. We recall that

\begin{eqnarray}   \label{E:4.12}
a_{0}(y) & = & \frac{\partial}{\partial s}\bigg|_{s=0} \left( \frac{1}{(2 \pi)^{2}} \int_{T_{y}^{\ast}Y}
\frac{1}{2 \pi i} \int_{\gamma} \mu^{-s} \Tr \widetilde{r}_{-3} \left(y, \xi, \frac{\lambda}{|\lambda|}, \mu\right) d\mu d \xi \right)  \\
& = & \frac{\partial}{\partial s}\bigg|_{s=0} \left( \frac{1}{(2 \pi)^{2}} \int_{T_{y}^{\ast}Y}
\frac{1}{2 \pi i} \int_{\gamma} \mu^{-s} \Tr \left\{ (\I) + (\II) + (\III) + (\IV) + (\V)  \right\} d\mu d \xi \right),   \nonumber
\end{eqnarray}

\noindent
where

\begin{eqnarray*}
& & (\I) ~ = ~ \frac{\sum_{|\omega|=2} \frac{1}{\omega !} \partial_{\xi}^{\omega} \widetilde{\alpha}_{1} \cdot D_{y}^{\omega} \widetilde{r}_{-1}}{\mu - \sqrt{|\xi|^2 + \lambda}}, \qquad
(\II) ~ = ~ \frac{\partial_{\xi} \widetilde{\alpha}_{1} \cdot D_{y} \widetilde{r}_{-2}}{\mu - \sqrt{|\xi|^2 + \lambda}},  \qquad
(\III) ~ = ~ \frac{\partial_{\xi} \widetilde{\alpha}_{0} \cdot D_{y} \widetilde{r}_{-1}}{\mu - \sqrt{|\xi|^2 + \lambda}},  \\
& & (\IV) ~ = ~ \frac{\widetilde{\alpha}_{0} \cdot \widetilde{r}_{-2}}{\mu - \sqrt{|\xi|^2 + \lambda}},  \qquad (\V) ~ = ~ \frac{\widetilde{\alpha}_{-1} \cdot \widetilde{r}_{-1}}{\mu - \sqrt{|\xi|^2 + \lambda}}.
\end{eqnarray*}

\noindent
Moreover, we denote

\begin{eqnarray*}
(\V) ~ = ~ \frac{\widetilde{\alpha}_{-1} \cdot \widetilde{r}_{-1}}{\mu - \sqrt{|\xi|^2 + \lambda}} ~ = ~
\frac{\widetilde{\alpha}_{-1}}{(\mu - \sqrt{|\xi|^2 + \lambda})^{2}} ~ = ~ (\V_{1}) + (\V_{2}) + (\V_{3}) + (\V_{4}) + (\V_{5}) + (\V_{6}) + (\V_{7}) + (\V_{8}),
\end{eqnarray*}

\noindent
where

\begin{eqnarray*}
& & (\V_{1}) ~ = ~ \frac{\sum_{|\omega|=2} \frac{1}{\omega !} \partial_{\xi}^{\omega} \widetilde{\alpha}_{1} \cdot \partial_{y}^{\omega} \widetilde{\alpha}_{1}}{2(\mu - \sqrt{|\xi|^2 + \lambda})^{2} \sqrt{|\xi|^2 + \lambda}}, \qquad
(\V_{2}) ~ = ~ \frac{i (\partial_{\xi} \widetilde{\alpha}_{0}) \cdot (\partial_{y} \widetilde{\alpha}_{1})}{2(\mu - \sqrt{|\xi|^2 + \lambda})^{2} \sqrt{|\xi|^2 + \lambda}},  \\
& & (\V_{3}) ~ = ~ \frac{i (\partial_{\xi} \widetilde{\alpha}_{1}) \cdot (\partial_{y} \widetilde{\alpha}_{0})}{2(\mu - \sqrt{|\xi|^2 + \lambda})^{2} \sqrt{|\xi|^2 + \lambda}},  \qquad
(\V_{4}) ~ = ~ \frac{- ( \I,~ \Oo) ~ \alpha_{0}^{2} ~ ( \I,~ \Oo)^{T}}{2(\mu - \sqrt{|\xi|^2 + \lambda})^{2} \sqrt{|\xi|^2 + \lambda}},  \\
& & (\V_{5}) ~ = ~ \frac{\widetilde{p}_{0}}{2(\mu - \sqrt{|\xi|^2 + \lambda})^{2} \sqrt{|\xi|^2 + \lambda}},  \qquad
(\V_{6}) ~ = ~ \frac{(2 \widetilde{\omega}_{m} - \widetilde{A}(y, 0)) \widetilde{\alpha}_{0}}{2(\mu - \sqrt{|\xi|^2 + \lambda})^{2} \sqrt{|\xi|^2 + \lambda}},  \\
& & (\V_{7}) ~ = ~ \frac{\partial_{y_{m}} \widetilde{\alpha}_{0}}{2(\mu - \sqrt{|\xi|^2 + \lambda})^{2} \sqrt{|\xi|^2 + \lambda}},  \qquad
(\V_{8}) ~ = ~ \frac{ - (\partial_{y_{m}} \widetilde{\omega}_{m} + \widetilde{\omega}_{m} \widetilde{\omega}_{m} - \widetilde{A}(y, 0) \widetilde{\omega}_{m})}{2(\mu - \sqrt{|\xi|^2 + \lambda})^{2} \sqrt{|\xi|^2 + \lambda}}.
\end{eqnarray*}

\noindent
Direct and tedious computations show the followings (cf. \cite{KL4}).
Here, as before, we denote ${\frak r}_{0} = \binom{2}{q}$ so that ${\frak r}_{0} = 1$ for $q = 0,~~ 2$ and ${\frak r}_{0} = 2$ for $q = 1$.

\begin{eqnarray*}
& & \frac{1}{(2 \pi)^{2}} \int_{T_{y}^{\ast}Y} \frac{1}{2 \pi i} \int_{\gamma} \mu^{-s} (\I) d\mu d \xi ~ = ~ - {\frak r}_{0} \cdot \frac{\tau_{Y}}{24 \pi} \cdot \frac{s+1}{s+2}, \\
& & \frac{1}{(2 \pi)^{2}} \int_{T_{y}^{\ast}Y} \frac{1}{2 \pi i} \int_{\gamma} \mu^{-s} (\II) d\mu d \xi ~ = ~
{\frak r}_{0} \cdot \frac{\tau_{Y}}{12 \pi} \cdot \frac{s+1}{s+2}
~ - ~ \frac{1}{4 \pi} \Tr \left( \partial_{y_{\alpha}} \widetilde{\omega}_{\alpha} \right) \cdot \frac{s+1}{s+2}, \\
& & \frac{1}{(2 \pi)^{2}} \int_{T_{y}^{\ast}Y} \frac{1}{2 \pi i} \int_{\gamma} \mu^{-s} (\III) d\mu d \xi ~ = ~ 0, \\
& & \frac{1}{(2 \pi)^{2}} \int_{T_{y}^{\ast}Y} \frac{1}{2 \pi i} \int_{\gamma} \mu^{-s} (\IV) d\mu d \xi ~ = ~  {\frak r}_{0} \cdot \frac{H_{1}^{2}}{4 \pi} \cdot \frac{(s + 1)^{2}(s+3)}{(s+2)(s+4)}
~ - ~ {\frak r}_{0} \cdot \frac{H_{2}}{4 \pi} \cdot \frac{s+1}{(s+2)(s+4)}  \\
& & \hspace{5.5 cm} - \frac{H_{1}}{2\pi} \Tr(\widetilde{\omega}_{m}) \frac{(s+1)^{2}}{s+2} + \frac{1}{4\pi} \Tr (\widetilde{\omega}_{m} \widetilde{\omega}_{m}) \cdot (s+1), \\
& & \frac{1}{(2 \pi)^{2}} \int_{T_{y}^{\ast}Y} \frac{1}{2 \pi i} \int_{\gamma} \mu^{-s} (\V_{1}) d\mu d \xi ~ = ~ - {\frak r}_{0} \cdot \frac{\tau_{Y}}{24 \pi} \cdot \frac{1}{s+2}, \\
& & \frac{1}{(2 \pi)^{2}} \int_{T_{y}^{\ast}Y} \frac{1}{2 \pi i} \int_{\gamma} \mu^{-s} (\V_{2}) d\mu d \xi ~ = ~ 0, \\
& & \frac{1}{(2 \pi)^{2}} \int_{T_{y}^{\ast}Y} \frac{1}{2 \pi i} \int_{\gamma} \mu^{-s} (\V_{3}) d\mu d \xi ~ = ~ {\frak r}_{0} \cdot \frac{\tau_{Y}}{12 \pi} \cdot \frac{1}{s+2}
~ - ~ \frac{1}{4 \pi} \Tr \left( \partial_{y_{\alpha}} \widetilde{\omega}_{\alpha} \right) \cdot \frac{1}{s+2}, \\
& & \frac{1}{(2 \pi)^{2}} \int_{T_{y}^{\ast}Y} \frac{1}{2 \pi i} \int_{\gamma} \mu^{-s} (\V_{4}) d\mu d \xi ~ = ~
{\frak r}_{0} \cdot \frac{H_{1}^{2}}{4 \pi} \cdot \frac{(s+1)(s+3)}{(s + 2)(s+4)}
- {\frak r}_{0} \cdot \frac{H_{2}}{4 \pi} \cdot \frac{1}{(s+2)(s+4)}  \\
& & \hspace{5.5 cm} -  \frac{1}{4 \pi} \Tr (\widetilde{\omega_{\alpha} \omega_{\alpha}}) \cdot \frac{1}{s+2}
- \frac{H_{1}}{2\pi} \Tr ( \widetilde{\omega}_{m}) \cdot \frac{s+1}{s+2} + \frac{1}{4\pi} \Tr ( \widetilde{\omega}_{m} \widetilde{\omega}_{m}), \\
& & \frac{1}{(2 \pi)^{2}} \int_{T_{y}^{\ast}Y} \frac{1}{2 \pi i} \int_{\gamma} \mu^{-s} (\V_{5}) d\mu d \xi ~ = ~
\frac{1}{4 \pi} \Tr \left( \partial_{y_{\alpha}} \widetilde{\omega}_{\alpha} \right)
~ + ~ \frac{1}{4 \pi} \Tr \left( \widetilde{\omega_{\alpha} \omega_{\alpha}} \right)  +
\frac{1}{4 \pi} \Tr \big( \widetilde{E}_{q} \big),
\end{eqnarray*}

\begin{eqnarray*}
& & \frac{1}{(2 \pi)^{2}} \int_{T_{y}^{\ast}Y} \frac{1}{2 \pi i} \int_{\gamma} \mu^{-s} (\V_{6}) d\mu d \xi ~ = ~ - {\frak r}_{0} \cdot \frac{H_{1}^{2}}{2 \pi} \cdot \frac{s+1}{s+2} + \frac{H_{1}}{2\pi} \Tr (\widetilde{\omega}_{m}) \cdot \frac{2s+3}{s+2} - \frac{1}{2\pi}
\Tr ( \widetilde{\omega}_{m} \widetilde{\omega}_{m}), \\
& & \frac{1}{(2 \pi)^{2}} \int_{T_{y}^{\ast}Y} \frac{1}{2 \pi i} \int_{\gamma} \mu^{-s} (\V_{7}) d\mu d \xi ~ = ~ {\frak r}_{0} \cdot \frac{1}{16 \pi} (\tau_{M} - \tau_{Y}) \cdot \frac{s+1}{s+2}
+ {\frak r}_{0} \cdot \frac{H_{1}^{2}}{2 \pi} \cdot \frac{s+1}{s+4}  \\
& & \hspace{5.5 cm} - ~  {\frak r}_{0} \cdot \frac{H_{2}}{8 \pi} \cdot \frac{s^{2} + s - 4}{(s+2)(s+4)}
- \frac{1}{4\pi} \Tr (\partial_{y_{m}} \widetilde{\omega}_{m}), \\
& & \frac{1}{(2 \pi)^{2}} \int_{T_{y}^{\ast}Y} \frac{1}{2 \pi i} \int_{\gamma} \mu^{-s} (\V_{8}) d\mu d \xi ~ = ~ - \frac{H_{1}}{2\pi} \Tr(\widetilde{\omega}_{m}) + \frac{1}{4 \pi} \Tr (\partial_{y_{m}} \widetilde{\omega}_{m}) +
\frac{1}{4 \pi} \Tr (\widetilde{\omega}_{m} \widetilde{\omega}_{m}).
\end{eqnarray*}

\noindent
Adding up the above terms, we obtain

\begin{eqnarray}    \label{E:4.13}
& & \frac{1}{(2 \pi)^{2}} \int_{T_{y}^{\ast}Y}
\frac{1}{2 \pi i} \int_{\gamma} \mu^{-s} \Tr \widetilde r_{-3} \left(y, \xi, \frac{\lambda}{|\lambda|}, \mu\right) d\mu d \xi \\
& = & {\frak r}_{0} \cdot \left\{ \frac{\tau_{M}}{16 \pi} \cdot \frac{s+1}{s+2}  -  \frac{\tau_{Y}}{48 \pi} \cdot \frac{s-1}{s+2}  +  \frac{H_{1}^{2}}{4 \pi} \cdot \frac{s^{3} + 6s^{2} + 7s + 2}{(s+2)(s+4)}  -
\frac{H_{2}}{8 \pi} \cdot \frac{s(s+3)}{(s+2)(s+4)}   \right\}  +  \frac{1}{4 \pi} \Tr (\widetilde{E}_{q})  \nonumber \\
&  & + ~ \frac{1}{4 \pi} \Tr(\widetilde{\omega_{\alpha} \omega_{\alpha}}) \cdot \frac{s+1}{s+2}
- \frac{H_{1}}{2 \pi} \Tr(\widetilde{\omega}_{m}) \cdot \frac{s^{2} + 2s + 1}{s+2} +
\frac{1}{4 \pi} \Tr(\widetilde{\omega}_{m} \widetilde{\omega}_{m}) \cdot (s+1),  \nonumber
\end{eqnarray}

\noindent
which shows that

\begin{eqnarray}    \label{E:4.14}
a_{0}(y) & = & {\frak r}_{0} \cdot \left( \frac{\tau_{M}}{64 \pi} - \frac{\tau_{Y}}{64 \pi} + \frac{11}{64 \pi} H_{1}^{2}
- \frac{3}{64 \pi} H_{2} \right)  \\
& &  + \frac{1}{16 \pi} \sum_{\alpha=1}^{2} \Tr(\widetilde{\omega_{\alpha} \omega_{\alpha}})
- \frac{3H_{1}}{8 \pi} \Tr (\widetilde{\omega}_{m}) + \frac{1}{4 \pi} \Tr(\widetilde{\omega}_{m} \widetilde{\omega}_{m}).  \nonumber
\end{eqnarray}

\noindent
If $p = 0$, then ${\frak r}_{0} = 1$ and $\widetilde{\omega}_{\alpha} = \widetilde{\omega}_{m} = 0$.
Eq.(\ref{E:3.27}) and (\ref{E:3.28}) show that if $p=1$, then ${\frak r}_{0} = 2$ and

\begin{eqnarray*}
\widetilde{\omega}_{m} ~ = ~ \left( \begin{array}{clcr} \kappa_{1} & 0 \\ 0 & \kappa_{2} \end{array} \right), \quad
\widetilde{\omega}_{m} \widetilde{\omega}_{m} ~ = ~ \left( \begin{array}{clcr} \kappa_{1}^{2} & 0 \\ 0 & \kappa_{2}^{2} \end{array} \right), \quad
\widetilde{\omega_{1} \omega_{1}} ~ = ~ - \kappa_{1}^{2} \left( \begin{array}{clcr} 1 & 0 \\ 0 & 0 \end{array} \right), \quad
\widetilde{\omega_{2} \omega_{2}} ~ = ~ - \kappa_{2}^{2} \left( \begin{array}{clcr} 0 & 0 \\ 0 & 1 \end{array} \right).
\end{eqnarray*}

\noindent
If $p = 2$, then ${\frak r}_{0} = 1$ and

\begin{eqnarray*}
\widetilde{\omega}_{m} ~ = ~ \kappa_{1} + \kappa_{2} = 2 H_{1}, \quad
\widetilde{\omega}_{m} \widetilde{\omega}_{m} ~ = ~ (\kappa_{1} + \kappa_{2})^{2} = 4H_{1}^{2}, \quad
\widetilde{\omega_{1} \omega_{1}} ~ = ~ - \kappa_{1}^{2}, \quad
\widetilde{\omega_{2} \omega_{2}} ~ = ~ - \kappa_{2}^{2}.
\end{eqnarray*}

\noindent
These facts lead to the following result.

\begin{theorem}  \label{Theorem:4.5}
When $\Dim Y = 2$, the constant $a_{0}$ and $a_{1}$ in Theorem \ref{Theorem:2.4} and Lemma \ref{Lemma:2.2} are given as follows.

\begin{eqnarray*}
a_{1} & = & 0,  \\
a_{0} & = & \begin{cases} \frac{1}{64 \pi} \int_{Y} \left( \tau_{M} - \tau_{Y} + 11 H_{1}^{2} - 3 H_{2} \right) ~ dy & \text{for} \quad q = 0  \\
\frac{1}{32 \pi} \int_{Y} \left( \tau_{M} - \tau_{Y} + 11 H_{1}^{2} - 15 H_{2} \right) ~ dy  & \text{for} \quad q = 1  \\
\frac{1}{64 \pi} \int_{Y} \left( \tau_{M} - \tau_{Y} + 11 H_{1}^{2} + 5 H_{2} \right) ~ dy  & \text{for} \quad q = 2 .
\end{cases}
\end{eqnarray*}
\end{theorem}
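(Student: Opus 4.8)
The assertion $a_1 = 0$ is precisely Lemma \ref{Lemma:4.4}, so the plan is to establish only the three formulas for $a_0$. First I would take as the starting point the pointwise identity (\ref{E:4.14}),
\begin{eqnarray*}
a_0(y) & = & {\frak r}_0 \left( \frac{\tau_M}{64 \pi} - \frac{\tau_Y}{64 \pi} + \frac{11}{64 \pi} H_1^2 - \frac{3}{64 \pi} H_2 \right) \\
& & \qquad + ~ \frac{1}{16 \pi} \sum_{\alpha=1}^{2} \Tr(\widetilde{\omega_\alpha \omega_\alpha}) - \frac{3 H_1}{8 \pi} \Tr(\widetilde{\omega}_m) + \frac{1}{4 \pi} \Tr(\widetilde{\omega}_m \widetilde{\omega}_m) ,
\end{eqnarray*}
which is the output of the symbol computation of Theorem \ref{Theorem:3.4} together with the contour integrals leading to (\ref{E:4.13}). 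The remaining work is then to insert into this the explicit connection-form data for each $q = 0, 1, 2$, taken from Corollary \ref{Corollary:3.4} --- i.e.\ from (\ref{E:3.27}) for $q=1$ and (\ref{E:3.28}) for $q=2$, restricted to the relevant $\binom{m-1}{q}$-dimensional diagonal block --- as already summarized in the three displays just before the statement of the theorem, and finally to integrate over $Y$.

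Carrying this out case by case: for $q=0$, ${\frak r}_0 = 1$ and $\widetilde{\omega}_\alpha = \widetilde{\omega}_m = 0$, so only the first line of (\ref{E:4.14}) survives and integration gives $a_0 = \frac{1}{64\pi}\int_Y (\tau_M - \tau_Y + 11 H_1^2 - 3 H_2)\,dy$. For $q=1$, ${\frak r}_0 = 2$, and the displayed matrices give $\Tr(\widetilde{\omega}_m) = \kappa_1 + \kappa_2$, $\Tr(\widetilde{\omega}_m \widetilde{\omega}_m) = \kappa_1^2 + \kappa_2^2$, and $\sum_\alpha \Tr(\widetilde{\omega_\alpha \omega_\alpha}) = -(\kappa_1^2 + \kappa_2^2)$; for $q=2$, ${\frak r}_0 = 1$, $\widetilde{\omega}_m = \kappa_1 + \kappa_2$, $\widetilde{\omega}_m \widetilde{\omega}_m = (\kappa_1 + \kappa_2)^2$, and $\sum_\alpha \Tr(\widetilde{\omega_\alpha \omega_\alpha}) = -(\kappa_1^2 + \kappa_2^2)$. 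Since $\dim M = 3$, formulas (\ref{E:2.31})--(\ref{E:2.32}) give $\kappa_1 + \kappa_2 = 2 H_1$ and $\kappa_1 \kappa_2 = H_2$, hence $\kappa_1^2 + \kappa_2^2 = 4 H_1^2 - 2 H_2$, and I would rewrite every trace in terms of $H_1, H_2$. One then checks that in both the $q=1$ and the $q=2$ case the $H_1^2$ contributions of the three correction terms cancel exactly, leaving in the $q=1$ case an extra $-\frac{3}{8\pi} H_2$ on top of $\frac{1}{32\pi}(\tau_M - \tau_Y + 11 H_1^2 - 3 H_2)$, which collects to $\frac{1}{32\pi}(\tau_M - \tau_Y + 11 H_1^2 - 15 H_2)$, and in the $q=2$ case an extra $\frac{1}{8\pi} H_2$ on top of $\frac{1}{64\pi}(\tau_M - \tau_Y + 11 H_1^2 - 3 H_2)$, which collects to $\frac{1}{64\pi}(\tau_M - \tau_Y + 11 H_1^2 + 5 H_2)$. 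Integrating over $Y$ in each case yields the three displayed identities.

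The only genuinely laborious ingredient has already been supplied in the excerpt, namely the derivation of (\ref{E:4.14}): evaluating the fibre and contour integrals of the terms $(\I)$ through $(\V_{8})$ built from $\widetilde{\alpha}_1, \widetilde{\alpha}_0, \widetilde{\alpha}_{-1}$ with the help of Lemma \ref{Lemma:4.1} and Lemma \ref{Lemma:4.2}. The step that actually produces the theorem is purely the bookkeeping reduction described above, so I expect no conceptual obstacle there. The single point requiring care is that for $q=1$ the quantities $\widetilde{\omega}_\alpha$ and $\widetilde{\omega}_m$ are honest $2 \times 2$ matrices rather than scalars, so the traces in (\ref{E:4.14}) must be taken of the correct blocks of (\ref{E:3.27}); once that is observed, the $q=1$ computation is no harder than the other two.
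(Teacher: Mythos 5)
Your proposal is correct and follows exactly the route the paper takes: $a_1=0$ is Lemma \ref{Lemma:4.4}, and $a_0$ is obtained by substituting the block data of Corollary \ref{Corollary:3.4} into the pointwise formula (\ref{E:4.14}) and integrating; your trace evaluations ($\Tr\widetilde{\omega}_m = 2H_1$, $\Tr(\widetilde{\omega}_m\widetilde{\omega}_m)=4H_1^2-2H_2$ for $q=1$ versus $4H_1^2$ for $q=2$, $\sum_\alpha\Tr(\widetilde{\omega_\alpha\omega_\alpha})=-(4H_1^2-2H_2)$), the cancellation of the $H_1^2$ corrections, and the resulting extra $-\tfrac{3}{8\pi}H_2$ and $+\tfrac{1}{8\pi}H_2$ terms all check out. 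The paper's own "proof" is precisely this bookkeeping, stated even more tersely.
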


\noindent
{\it Remark} : When $\Dim Y = 2$, we get from (\ref{E:2.10}), (\ref{E:3.78}), (\ref{E:4.13})  and Lemma \ref{Lemma:2.3}

\begin{eqnarray}    \label{E:4.15}
q_{2} & = & \int_{Y} q_{2}(y) ~ dy ~ = ~ \frac{1}{2} \int_{Y} \Big\{ {\frak r}_{0} \cdot \left( \frac{\tau_{M}}{32 \pi} ~ + ~ \frac{\tau_{Y}}{96 \pi} ~ + ~ \frac{H_{1}^{2}}{16 \pi} \right) ~ + ~  \frac{1}{4 \pi} \Tr (\widetilde{E}_{q}) \\
& &  + ~  \frac{1}{8 \pi} \sum_{\alpha=1}^{2} \Tr (\widetilde{\omega_{\alpha} \omega_{\alpha}}) ~ - ~  \frac{H_{1}}{4 \pi} \Tr (\widetilde{\omega}_{m})
~ + ~  \frac{1}{4 \pi} \Tr (\widetilde{\omega}_{m} \widetilde{\omega}_{m}) \Big\} ~ dy    \nonumber \\
& = & \begin{cases} \frac{1}{8 \pi} \int_{Y} \left( \frac{1}{8} \tau_{M} + \frac{1}{24} \tau_{Y} + \frac{1}{4} H_{1}^{2} ~ \right) ~ dy & \text{for} \quad q = 0  \\
\frac{1}{8 \pi} \int_{Y} \left( - \frac{1}{4} \tau_{M} - \frac{5}{12} \tau_{Y} + \frac{1}{2} H_{1}^{2}  \right) ~ dy  & \text{for} \quad q = 1  \\
\frac{1}{8 \pi} \int_{Y} \left( - \frac{3}{8} \tau_{M} + \frac{13}{24} \tau_{Y} + \frac{1}{4}  H_{1}^{2} \right) ~ dy  & \text{for} \quad q = 2   \end{cases}   \nonumber \\
& = & {\frak b}_{3} - {\frak c}_{3} ~ = ~ \frac{1}{2} \left( \zeta_{Q^{q}_{\Abs}(0)}(0) + \ell_{q} \right),   \nonumber
\end{eqnarray}

\noindent
which agrees with Corollary \ref{Corollary:2.10}.

\vspace{0.3 cm}

As an application of Corollary \ref{Corollary:2.5} and Theorem \ref{Theorem:4.3}, we recover Theorem 1.1 in \cite{GG}.
For this purpose let $M$ be a $2$-dimensional compact Riemann manifold with boundary $Y$.
We consider a Laplacian $\Delta^{0}_{M}$ acting on smooth functions and the conformal variation of Corollary \ref{Corollary:2.5} as follows. For a smooth function $F : M \rightarrow {\mathbb R}$, we denote $g_{ij}(\epsilon) = e^{2 \epsilon F} g_{ij}$.
We also denote by $ \ln \Det \Delta^{0}_{M, \Abs}(\epsilon)$, $\ln \Det \Delta^{0}_{M, \Dir}(\epsilon)$, $\kappa(\epsilon)$, $dy(\epsilon)$, and $~ \ln \Det Q^{0}_{\Abs}(0)(\epsilon) $
the corresponding objects with respect to the metric $g_{ij}(\epsilon)$, where $\Delta^{0}_{M}(\epsilon) = e^{-2 \epsilon F} \Delta^{0}_{M}$ and $Q^{0}_{\Abs}(0)(\epsilon) = e^{- \epsilon F} Q^{0}_{\Abs}(0)$.
Then, Corollary \ref{Corollary:2.5} and Theorem \ref{Theorem:4.3} for $g_{ij}(\epsilon)$ can be rewritten by

\begin{eqnarray}    \label{E:4.16}
& & \ln \frac{\Det^{\ast} Q^{0}_{\Abs}(0)(\epsilon)}{\ell(Y)(\epsilon)} ~ = ~ - \frac{1}{2 \pi} \int_{Y} \kappa(\epsilon) dx(\epsilon) ~ - ~ \ln V(M)(\epsilon) ~ + ~
\ln \Det^{\ast} \Delta^{0}_{M, \Abs}(\epsilon) - \ln \Det \Delta^{0}_{M, \Dir}(\epsilon)    \nonumber \\
&  & \hspace{1.0 cm} = ~ - \frac{1}{2 \pi} \int_{Y} \kappa(\epsilon) e^{\epsilon F} ~ dy ~ - ~ \ln \int_{M} e^{2 \epsilon F} ~ dx ~ + ~
\ln \Det^{\ast} \Delta^{0}_{M, \Abs}(\epsilon) - \ln \Det \Delta^{0}_{M, \Dir}(\epsilon),
\end{eqnarray}

\noindent
where $dx = d \vol(M)$.
For $t \rightarrow 0^{+}$, we put

\begin{eqnarray}    \label{E:4.17}
\Tr \left( F e^{-t \Delta^{0}_{M, \Abs}} \right) & \sim & \sum_{j=0}^{\infty} {\frak a}_{j}(F) t^{- \frac{2-j}{2}}, \qquad
\Tr \left( F e^{-t \Delta^{0}_{M, \Dir}} \right) ~ \sim ~ \sum_{j=0}^{\infty} {\frak b}_{j}(F) t^{- \frac{2-j}{2}}.
\end{eqnarray}

\noindent
It is well known that \cite{BG1,BG2}

\begin{eqnarray}    \label{E:4.18}
& & \frac{d}{d \epsilon}\big|_{\epsilon = 0} \ln \Det^{\ast} \Delta^{0}_{M, \Abs}(\epsilon) ~ = ~ - 2 \left( {\frak a}_{2}(F) - \frac{1}{\vol(M)} \int_{M} F(x) ~ dx \right), \\
& & \frac{d}{d \epsilon}\big|_{\epsilon = 0} \ln \Det^{\ast} \Delta^{0}_{M, \Dir}(\epsilon) ~ = ~ - 2 {\frak b}_{2}(F),  \qquad
\frac{d}{d \epsilon}\big|_{\epsilon = 0} \kappa(\epsilon) ~ = ~ - F \kappa - F_{;2},   \nonumber
\end{eqnarray}

\noindent
where $F_{;2}$ is the derivative of $F$ with respect to the inward unit normal vector field. Moreover, it is also well known that \cite{Gi3,Ki}
\begin{eqnarray}    \label{E:4.19}
{\frak a}_{2}(F) - {\frak b}_{2}(F) & = & \frac{1}{4 \pi} \int_{Y} F_{;2} ~ dy.
\end{eqnarray}

\noindent
Consideration of all these facts shows that

\begin{eqnarray}     \label{E:4.20}
\frac{d}{d \epsilon}\big|_{\epsilon = 0} \ln \frac{\Det^{\ast} Q(0)(\epsilon)}{\ell(Y)(\epsilon)} & = & 0,
\end{eqnarray}

\noindent
which shows that $\frac{\Det^{\ast} Q^{0}_{\Abs}(0)}{\ell(Y)}$ is a conformal invariant, which is proved earlier in \cite{GG} (see also \cite{EW}).

\vspace{0.2 cm}

\noindent
{\it Remark} : A similar computation shows that $\ln \Det^{\ast} Q^{1}_{\Abs}(0)$ depends on the conformal change of a metric, where $Q^{1}_{\Abs}(0)$ is the Dirichlet-to-Neumann operator acting on $1$-forms on $Y$ with $\Dim Y = 1$.

\vspace{0.5 cm}

\section*{Declarations}
\subsection*{Ethics approval and consent to participate}
Not applicable.
\subsection*{Consent for publication}
Not applicable.
\subsection*{Availability of data and materials}
Data sharing not applicable to this article as no datasets were generated or analysed during the current study.
\subsection*{Competing interests}
On behalf of all authors, the corresponding author states that there is no conflict of interest.
\subsection*{Funding}
The second author was supported by the National Research Foundation of Korea with the Grant number 2016R1D1A1B01008091.
\subsection*{Authors' contributions}
Both authors contributed equally to the manuscript.


\end{document}